\documentclass[a4paper,10pt,fleqn]{article}
\usepackage{amsmath}
\usepackage{amssymb}
\usepackage{theorem}
\usepackage{euscript}

\topmargin -0.0cm
\oddsidemargin -0.1cm
\textwidth  17cm 
\headheight 0.0cm
\textheight 23.5cm
\parindent  6mm
\parskip    9pt
\tolerance  1000
\newtheorem{theorem}{Theorem}[section]
\newtheorem{lemma}{Lemma}[section]
\newtheorem{corollary}{Corollary}[section]
\numberwithin{equation}{section}
\title{\sffamily High-dimensional periodic sampling on Smolyak grids \\ based on B-spline quasi-interpolation
\author{ 
Dinh D\~ung \\[5mm]
Vietnam National University, Hanoi, Information Technology Institute \\
144 Xuan Thuy, Cau Giay, Hanoi, Vietnam\\
{\ttfamily dinhzung@gmail.com}\\[4mm]}}
\date{\ttfamily December 20, 2015 --  Version 2.6}
 \tolerance 2500
\def\II{{\mathbb I}}

\def\ZZ{{\mathbb Z}}
\def\NN{{\mathbb N}}
\def\RR{{\mathbb R}}

\def\Pp{{\mathcal P}}

\def\Ff{{\mathcal F}}
\def\TT{{\mathbb T}}
\def\IId{{\mathbb I}^d}

\def\NNd{{\mathbb N}^d}
\def\RRd{{\mathbb R}^d}
\def\TTd{{\mathbb T}^d}

\def\ZZd{{\mathbb Z}^d}
\def\RRdp{{\mathbb R}^d_+}
\def\ZZdp{{\mathbb Z}^d_+}

\def\ZZdpu{{\mathbb Z}^d_+(u)}

\def\Pp{{\mathcal P}}

\def\Ff{{\mathcal F}}

\def\Ha{H^\alpha_\infty(\TTd)}
\def\Ua{U^\alpha_\infty}

\def\Uas{\mathring{U}^{\alpha}_\infty}

\def\Uan{U^{\alpha,\nu}_\infty}
\def\Hau{H^\alpha_\infty(u)}
\def\Int{\operatorname{Int}}
\def\supp{\operatorname{supp}}


\begin{document}
\maketitle

\begin{abstract}
We constructed linear algorithms of sampling recovery and cubature formulas on Smolyak grids 
parametrized by $m \in \NN$ of periodic $d$-variate functions having Lipschitz-H\"older mixed smoothness $\alpha > 0$ based on B-spline quasi-interpolation, and  studied their optimality.  We established lower estimates 
(for $\alpha \le 2$) and upper bounds of the error of the optimal sampling recovery and the optimal integration on Smolyak grids, explicit in 
$d$, $m$ and the number $\nu$ of active variables of functions when $d$ and $m$ may be large.

\medskip
\noindent
{\bf Keywords and Phrases} High-dimensional sampling recovery $\cdot$ Linear sampling algorithms  
$\cdot$ Smolyak grids $\cdot$  Lipschitz-H\"older spaces of mixed smoothness  $\cdot$ 
B-spline quasi-interpolation representations.

\medskip
\noindent
{\bf Mathematics Subject Classifications (2000)} \ 41A15  $\cdot$  41A05  $\cdot$
  41A25  $\cdot$  41A58 $\cdot$  41A63.
  
\end{abstract}

\section{Introduction}


We are interested in sampling recovery and cubature of functions on $\RRd$ $1$-periodic at each variable. It is convenient to consider them as functions defined in the $d$-torus $\TTd = [0,1]^d$ which is defined as the cross product of
$d$ copies of the interval $[0,1]$ with the identification of the end points. To avoid confusion, we use the notation $\IId$ to denote the standard unit $d$-cube $[0,1]^d$. 

For $m \in \NN$, the well known periodic Smolyak grid of points $G^d(m) \subset \TTd$ is defined as
\begin{equation}  \nonumber
G^d(m)
:= \
\{\xi = 2^{-k} s: k \in \NNd, \ |k|_1 = m, \ s \in I^d(k)\},
\end{equation}
where  $I^d(k) := \{s \in \ZZdp: s_j = 0,1,..., 2^{k_j} - 1, \ j \in [d]\}$. Here and in what follows, we use the notations: $\ZZ_+:= \{s \in \ZZ: s \ge 0 \}$; $xy := (x_1y_1,..., x_dy_d)$; 
$2^x := (2^{x_1},...,2^{x_d})$;
$|x|_1 := \sum_{i=1}^d |x_i|$ for $x, y \in {\RR}^d$;
$[n]$ denotes the set of all natural numbers from $1$ to $n$; $x_i$ denotes the $i$th coordinate 
of $x \in \RR^d$, i.e., $x := (x_1,..., x_d)$. 

The sparse grids $G^d(m)$ for sampling recovery and numerical integration were first considered by Smolyak 
\cite{Sm63}. In \cite{Te85}--\cite{Te93b} and  
\cite{Di90}--\cite{Di92} Smolyak's construction was developed for studying the sampling recovery for periodic Sobolev classes  and Nikol'skii classes  having mixed smoothness. Recently, 
the sampling recovery  for periodic Besov classes  having mixed smoothness has been investigated in  
\cite{SU07,U08}.
For non-periodic functions of mixed smoothness linear sampling algorithms on Smolyak grids have been recently studied in  \cite{Tr10} $(d=2)$, \cite{Di11, SU11}, using the mixed tensor product B-splines. Methods of approximation by arbitrary linear combinations of translates of the Korobov kernel $\kappa_{r,d}$ on Smolyak grids of functions from the Korobov space $K^r_2(\TTd)$ which is a reproducing kernel Hilbert space with the associated kernel 
$\kappa_{r,d}$,  have constructed in \cite{DM13}. 

In numerical applications for approximation problems involving a large number of variables, Smolyak grids was first considered in
\cite{Ze91} in parallel algorithms for numerical solving PDEs. 
Numerical integration on Smolyak grids was investigated in \cite{GeGr98}. For non-periodic functions of mixed smoothness of integer order, linear sampling algorithms on Smolyak grids have been investigated in \cite{BG04} employing hierarchical Lagrangian polynomials multilevel basis. There is a very large number of papers on Smolyak grids and their modifications in various problems of approximations, sampling recovery and integration with applications in data mining, mathematical finance, learning theory, numerical solving of PDE and stochastic PDE, etc. to mention all of them. The reader can see the surveys in \cite{BG04, GN09, GeGr08} and the references therein.  For recent further developments and results see in \cite{GHo10, GH13a, GH13b, GaHe09, BGGK13}.

The Smolyak grids $G^d(m)$ and their various modifications 
(in partucular, the grids $\mathring{G}(m)$ and  $G^\nu(m)$) are very sparse. 
The number of knots of $G^d(m)$ is smaller than $\frac{1}{(d-1)!}2^m m^{d-1}$ and is much smaller than $2^{dm}$, the number of knots of  corresponding standard full grids. However, for periodic functions having mixed smoothness,  they give the same error of the sampling recovery on the standard full grids.
 
Quasi-interpolation based on scaled B-splines with integer knots, possesses good local and approximation properties for smooth functions, see \cite[p. 63--65]{BHR93}, \cite[p. 100-107]{C92}. It can be an efficient tool in some high-dimensional approximation problems, especially in applications ones. Thus, one of the important bases for sparse grid high-dimensional approximations having various applications, are the Faber functions (hat functions) which are piecewise linear B-splines of second order  
\cite{BG04,GeGr08, GHo10, GH13a, GH13b, GaHe09, BGGK13}.  The representation by Faber basis can be obtained by the B-spline quasi-interpolation (see, e. g., \cite{Di11}).                                              

The object of our interest in sampling recovery are functions having Lipschitz-H\"older mixed smoothness $\alpha > 0$. Let us introduce the space $\Ha$ of all such functions. 
For univariate functions $f$ on $\RR$, the $r$th difference operator $\Delta_h^r$ is defined by 
\begin{equation*}
\Delta_h^r(f,x) := \
\sum_{j =0}^r (-1)^{r - j} \binom{r}{j} f(x + jh).
\end{equation*}
If $u$ is any subset of $[d]$, for multivariate functions on $\RRd$
the mixed $(r,u)$th difference operator $\Delta_h^{r,u}$ is defined by 
\begin{equation*}
\Delta_h^{r,u} := \
\prod_{i \in u} \Delta_{h_i}^r, \ \Delta_h^{r,\varnothing} = I,
\end{equation*}
where the univariate operator
$\Delta_{h_i}^r$ is applied to the univariate function $f$ by considering $f$ as a 
function of  variable $x_i$ with the other variables held fixed. 
If $0 < \alpha \le r$, 
we introduce the semi-norm 
$|f|_{\Hau}$ for functions $f \in C(\TTd)$ by
\begin{equation} \nonumber
|f|_{\Hau}:= \
 \sup_{h \in \IId} \ \prod_{i \in u} h_i^{-\alpha}\|\Delta_h^{r,u}(f)\|_{C(\TTd)}
\end{equation}
(in particular, $|f|_{\Ha(\varnothing)} = \|f\|_{C(\TTd)}$).
The Lipschitz-H\"older space 
$\Ha$ of mixed smoothness $\alpha$ is defined as the set of  functions $f \in C(\TTd)$ 
for which the  norm 
\begin{equation*} 
\|f\|_{\Ha}
:= \ 
 \sup_{u \subset [d]} |f|_{\Hau}
\end{equation*}
is finite. The non-periodic space $H^\alpha_\infty(\IId)$ can be defined in a similar way with a slight modification.

In what follows, for the space $\Ha$, the parameters $\alpha$ and $p$ are fixed and therefore their value is considered as a constant. Denote by $\Ua$ the unit ball in $\Ha$. Paralleling with $\Ua$ we will consider the sampling recovery of functions from its subsets $\Uas$ and $\Uan$, $1 \le \nu \le d$. The set  $\Uas$ consists of all functions $f \in \Ua$ such that $f(x) =0$ if $x_j=0$  for some index $j \in [d]$.  It can be seen also as the subset of the unit ball in the space $H^\alpha_\infty(\IId)$ with zero boundary condition.
The set  $\Uan$ consists of all functions $f \in \Ua$ having at most $\nu$ active variables. 
Here, we say that the $j$th variable $x_j$ is  active for a function $f$ on $\TTd$ if for all $i \not=j$ there are $x_i^*$ such that the univariate function $g(t) = f(x_1^*,..., x_{j-1}^*,t,x_{j+1}^*,...,x_d^*)$ is not a constant as a univariate function in $t$. With this definition we have $U^{\alpha,d}_\infty = \Ua$. The set  $\Uan$ can be a model of the objects in a $d$-variate space depending only a few $\nu$ (much smaller than $d$) of variables without in general, exact information about them.

For sampling recovery of functions $f \in \Uas$, we use the linear sampling algorithm
\begin{equation} \nonumber
\mathring{S}_m(\mathring{\Phi}_m,f)
:= \
\sum_{\xi \in \mathring{G}(m)} f(\xi) \varphi_\xi,
\end{equation}
on the Smolyak grids 
\begin{equation}  \nonumber
\mathring{G}(m)
:= \
\{\xi = 2^{-k} s: k \in \NNd, \ |k|_1 = m, \ s \in \mathring{I}^d(k)\}
\end{equation}
where $\mathring{\Phi}_m = \{\varphi_\xi\}_{\xi \in \mathring{G}(m)}$ is a family of functions on $\TTd$,
$\mathring{I}^d(k) := \{s \in \ZZdp: s_j = 1,..., 2^{k_j} - 1, \ j \in [d]\}$.
For sampling recovery of functions  $f \in \Uan$, we use the linear sampling algorithm
\begin{equation} \nonumber
S^\nu_m(\Phi_m^\nu,f)
:= \
\sum_{\xi \in G^\nu(m)} f(\xi) \varphi_\xi,
\end{equation}
on the  Smolyak grids
\begin{equation} \nonumber 
G^\nu(m)
:= \
\{\xi = 2^{-k} s: k \in \NNd, \ |k|_1 = m, |\supp(k)| \le \nu, \ s \in I^d(k)\},
\end{equation}
where $\Phi_m^\nu = \{\varphi_\xi\}_{\xi \in G^\nu(m)}$ is a family of functions on $\TTd$,  
$I^d(k):= \{s \in \ZZdp: s_j = 0,1,..., 2^{k_j} - 1, \ j \in [d]\}$, $\operatorname{supp}(k)$ denotes the support of $k$, i.e., the subset of all $j \in [d]$ such that $k_j \not= 0$, and $|A|$ denotes the cardinality of the finite set $A$.

Let us notice the following.  The number of points in the grid $\mathring{G}(m)$ is 
\begin{equation}  \label{[G^d_circ(m)]}
|\mathring{G}(m)|
\ =  \
\sum_{k \in \NNd: \ |k|_1 = m} \ \prod_{j=1}^d (2^{k_j} - 1)
\ \le \
2^m \binom{m-1}{d-1} 
\ = \
|G^d(m)|
, \ m \ge d,
\end{equation}
and the number $G^\nu(m)$ of points in the grid $G^\nu(m)$ is 
\begin{equation}  \label{[G^nu(m)]}
|G^\nu(m)|
\ =  \
\sum_{|u| \le \nu} \ \sum_{\supp(k)= u, \, |k|_1 = m} \ \prod_{j \in \supp(k)} (2^{k_j} - 1)
\ \le \
2^m \binom{d}{\nu}\binom{m-1}{\nu-1}, \ m \ge \nu.
\end{equation}
The choice of the Smolyak grid $G^\nu(m)$ for sampling recovery of functions $f \in \Uan$ is quite natural since in general, their active variables are unknown.
Moreover, the grids $G^\nu(m)$ polynomially depending on $d$, is full if and only if $m \ge \nu$, and the grids 
$\mathring{G}(m)$ are empty if $m < d$. For this reason, we are interested in constructing linear sampling algorithms on the grids $G^\nu(m)$ and $\mathring{G}(m)$ for $m \ge \nu$ and $m \ge d$, respectively. 

In high-dimensional approximation applications, knowing the number of sampled function values on the Smolyak grids $G^\nu(m)$ and $\mathring{G}(m)$ as in \eqref{[G^d_circ(m)]} and \eqref{[G^nu(m)]} and linear sampling algorithms
$\mathring{S}_m(\mathring{\Phi}_m,f)$
 and $S^\nu_m(\Phi^\nu_m,f)$, one attempts to estimate the errors of $\|f - \mathring{S}_m(\mathring{\Phi}_m,f)\|_p$
 and $\|f - S^\nu_m(\Phi^\nu_m,f)\|_p$ as a function of three variables $m$, $d$, $\nu$ when $m$ and $d$ may be very large. To study the optimality of these algorithms let us introduce the quantity of optimal sampling recovery $\mathring{s}_m(F_d)_p$ on Smolyak grids 
$\mathring{G}(m)$ with respect to the function class $F_d$ by
\begin{equation} \nonumber
\mathring{s}_m(F_d)_p
\ := \ \inf_{\mathring{\Phi}_m} \  \sup_{f \in F_d} \, \|f - \mathring{S}_m(\mathring{\Phi}_m,f)\|_p, 
\end{equation} 
and 
the quantity of optimal sampling recovery $s^\nu_m(F_d)_p$ on Smolyak grids 
$G^\nu(m)$ with respect to the function class $F_d$ by
\begin{equation} \nonumber
s^\nu_m(F_d)_p
\ := \ \inf_{\Phi^\nu_m} \  \sup_{f \in F_d} \, \|f - S^\nu_m(\Phi^\nu_m,f)\|_p.
\end{equation} 

In traditional estimation of the error, a typical form of  lower and upper bounds of $s^d_m(\Ua)_p$ and $\mathring{s}_m(\Uas)_p$ is 
\begin{equation} \label{[TradEstimation]}
c'(d)\, 2^{-\alpha m} m^{d-1}
\ \le \
\mathring{s}_m(\Uas)_p
\ \le \
s^d_m(\Ua)_p
\ \le \
c(d)\, 2^{-\alpha m} m^{d-1},
\end{equation}
where $c(d)$ and $c'(d)$ are a constant not explicitly computed in $d$ 
(see, e.g., \cite{Di11, DU14, Te93b}). 
In high-dimensional approximation problems using function values information, the parameter $m$ is main in the study of convergence rates of the approximation error with respect to $m$ going to infinity. However, the dimension $d$ may hardly affect this rate when $d$ is large. 
In the present paper, we establish upper and lower bounds for $\mathring{s}_m(\Uas)_p$ and $s^\nu_m(\Uan)_p$ explicitly in $m$ and $\nu,d$ as a function of three variables 
$m$, $d$, $\nu$. Notice for example, that in traditional estimates the term 
$2^{-\alpha m} m^{d-1}$ is {\em a priori} split from constants $c(d, \alpha,p)$ which are actually a function of dimension parameter $d$, and therefore, any high-dimensional estimate based on them may lead to a rough bound. By a combinatoric argument, it is more natural and suitable to investigate  upper and lower bounds for $\mathring{s}_m(\Uas)_p$ and $s^\nu_m(\Uan)_p$ in the form 
\begin{equation} \label{[<ring{s}_m<]}
\mathring{C}'(d)\, 2^{-\alpha m} \binom{m}{d-1}
 \le   
\mathring{s}_m(\Uas)_p
\le 
\mathring{C}(d)\, 2^{-\alpha m} \binom{m}{d-1}, 
\end{equation}
\begin{equation} \label{[<s^nu_m<]}
C'(\nu)\,  2^{-\alpha m} \binom{m}{\nu-1}
 \le   
s^\nu_m(\Uan)_p
\ \le \  
C(\nu)\,  2^{-\alpha m} \binom{m}{\nu-1},
\end{equation}
or in the more refined form
\begin{equation} \label{[<ring{s}_m<](2)}
\mathring{A}'(m,d)\, 2^{-\alpha m} 
 \le   
\mathring{s}_m(\Uas)_p
\le 
\mathring{A}(m,d)\, 2^{-\alpha m}, 
\end{equation}
\begin{equation} \label{[<s^nu_m<](2)}
A'(m,\nu)\,  2^{-\alpha m}
 \le   
s^\nu_m(\Uan)_p
\ \le \  
A(m,\nu)\,  2^{-\alpha m}. 
\end{equation}

In the present paper,  we estimate $\mathring{s}_m(\Uas)_p$ and $s^\nu_m(\Uan)_p$ 
in the forms \eqref{[TradEstimation]}, \eqref{[<ring{s}_m<]}--\eqref{[<s^nu_m<]} and  
\eqref{[<ring{s}_m<](2)}--\eqref{[<s^nu_m<](2)},
stressing in finding lower and upper bounds for the accompanying "constants"
 explicit in $d$ or in $\nu$ or in $m,\nu$, respectively.
To obtain these upper we construct linear sampling algorithms on the Smolyak grids $\mathring{G}(m)$ and  $G^\nu(m)$ based on a B-spline quasi-interpolation series especially, on Faber series related to the well-known hat functions. Due to the complicatedness, we are restricted to give these lower bounds only for the case $\alpha \le 2$.  As consequences we obtain upper and lower bounds for the quantities of optimal cubatute formula $\mathring{\Int}_m(\Uas)_p$ and  $\Int^\nu_m(\Uan)_p$  explicit in $m$ and $d, \nu$.

Related to the problems investigated in the present paper, is the problem of hyperbolic cross approximation of functions having mixed smoothness in high-dimensional setting in terms of $n$-widths and 
$\varepsilon$-dimensions which have been investigated in \cite{CD13, DU13, KSU14}.

The paper is organized as follows. In Section \ref{Faber}, we establish upper and lower bounds and construct linear sampling algorithms on Smolyak grids based on  Faber series for $\mathring{s}_m(\Uas)_p$ and 
$s^\nu_m(\Uan)_p$ for $0 < \alpha \le 2$. As consequences, we derive upper and lower bounds for the error of cubature formulas on Smolyak grids and of optimal integration on Smolyak grids, and  $\mathring{\Int}_m(\Uas)_p$ and 
$\Int_m^\nu(\Uan)_p$.  In Section \ref{Sampling recovery[alpha>2]}, we extend the results on upper bounds of $s^\nu_m(\Uan)_p$ and $\Int_m^\nu(\Uan)_p$ obtained in Section~\ref{Faber} to the case of arbitrary mixed smoothness 
$\alpha > 0$, based on high-order B-spine quasi-interpolation representations for function from $\Ha$.  In Section~\ref{Examples}, we give some example of polynomials inducing  quasi-interpolation operators.

\section{Sampling recovery based on Faber series}
\label{Faber} 

\subsection{Faber series}

Let  $M_2(x)\ = \ (1 - |x-1|)_+$, $x \in \II$, be the piece-wise linear B-spine with knot at $0,1,2$ (the hat function), where 
$x_+:= \max(x,0)$ for $x \in \RR$. Since the support of functions $M_2(2^{k+1}\cdot)$ for $k \in \ZZ_+$ is the interval $[0,2^{-k}] \subset \II$, we can extend these functions to an $1$-periodic function on the whole $\RR$. Denote this periodic extension by $\varphi_k$.
The univariate hat functions $\varphi_{k,s}$ are defined by
\begin{equation*} 
\varphi_{0,0}:= 1, 
\quad
\varphi_{k,s}:= \varphi_k(\cdot - 2s), \ k > 0 \ s \in Z(k),
\end{equation*}
where $Z(0) := \{0\}$ and  $Z(k) := \{0,1,..., 2^{k-1} - 1\}$.
Put $Z^d(k):= \prod_{i=1}^d Z(k_i)$. 
For $k \in {\ZZ}^d_+$, $s \in Z^d(k)$, define the $d$-variate hat functions
\begin{equation*} 
\varphi_{k,s}(x)
\ := \
\prod_{i=1}^d \varphi_{k_i,s_i}(x_i),
\end{equation*}
and the $d$-variate periodic Faber system $\Ff_d$  by 
\begin{equation*} 
\Ff_d := \{\varphi_{k,s}: s\in Z^d(k),\ k \in \ZZdp\}. 
\end{equation*}

For functions $f$ on $\TT$, we define the univariate linear functionals $\lambda_{k,s}$ by 
\begin{equation*} 
\lambda_{k,s}(f) \ := \
- \frac {1}{2} \Delta_{2^{-k}}^2 (f,2^{-k + 1}s),   \, k > 0, \ 
\text{and} \ \lambda_{0,0}(f) \ := \ f(0). 
\end{equation*}
Let the $d$-variate linear functionals
$\lambda_{k,s}$ be defined as
\begin{equation*}   
\lambda_{k,s}(f) 
\ := \   
\lambda_{k_1,s_1}(\lambda_{k_2,s_2}(... \lambda_{k_d,s_d}(f))),
\end{equation*}
where the univariate functional
$\lambda_{k_i,s_i}$ is applied to the univariate function $f$ by considering $f$ as a 
function of  variable $x_i$ with the other variables held fixed. 

\begin{lemma} \label{lemma[convergence(d)]}
 The $d$-variate periodic Faber system $\Ff_d$ is a basis in $C(\TTd)$. Moreover, a function 
$f \in C(\TTd)$ can be represented by the Faber series 
\begin{equation} \label{eq[FaberRepresentation]}
f
\ = \
\sum_{k \in \ZZdp} q_k(f) 
\ = \
\sum_{k \in \ZZdp} \sum_{s \in Z^d(k)} \lambda_{k,s}(f)\varphi_{k,s}, 
\end{equation}
converging in the norm of $C(\TTd)$.
\end{lemma}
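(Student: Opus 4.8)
The plan is to build the Faber representation by first treating the univariate case and then lifting it to $d$ variables by a tensor-product / telescoping argument. For a univariate $f \in C(\TT)$, I would start from the classical fact that the piecewise-linear interpolant of $f$ at the dyadic points $2^{-k}\ZZ$ converges uniformly to $f$ as $k \to \infty$ (by uniform continuity of $f$ on the compact torus). Writing $Q_k f$ for this interpolant at level $k$, the difference $Q_{k}f - Q_{k-1}f$ is a piecewise-linear function vanishing at the points of the coarser grid $2^{-k+1}\ZZ$, hence it is a linear combination of the hat functions $\varphi_{k,s}$, $s \in Z(k)$, whose support sits between consecutive coarse points. A direct computation of the coefficient — the value of $Q_kf - Q_{k-1}f$ at the midpoint $2^{-k+1}s + 2^{-k}$ — gives exactly $f(2^{-k+1}s+2^{-k}) - \tfrac12\bigl(f(2^{-k+1}s) + f(2^{-k+1}s+2^{-k+1})\bigr) = -\tfrac12\Delta^2_{2^{-k}}(f, 2^{-k+1}s) = \lambda_{k,s}(f)$. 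Telescoping $Q_Nf = Q_0 f + \sum_{k=1}^N (Q_kf - Q_{k-1}f)$ and letting $N\to\infty$ yields the univariate Faber expansion $f = \sum_{k\ge 0} q_k(f)$ with $q_0(f) = f(0)\varphi_{0,0}$, convergent in $C(\TT)$.

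For the $d$-variate case I would iterate this one variable at a time. Let $Q_k^{(i)}$ denote the univariate operator $Q_k$ acting on the $i$-th variable with the others frozen; these operators commute, are uniformly bounded on $C(\TTd)$ (norm $1$), and for each fixed argument the one-variable convergence above shows $Q_k^{(i)}f \to f$ in $C(\TTd)$ as $k \to \infty$. Then the tensor-product operator $\mathcal Q_{\mathbf k} := Q_{k_1}^{(1)}\cdots Q_{k_d}^{(d)}$ converges to $f$ in $C(\TTd)$ as $\min_i k_i \to \infty$: one removes the operators one at a time, $\|f - \mathcal Q_{\mathbf k}f\| \le \sum_{i=1}^d \|(I - Q_{k_i}^{(i)})$ composed with bounded operators$\|$, each term small. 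Applying the univariate telescoping identity in each variable and expanding the product $\prod_{i=1}^d\bigl(\sum_{k_i\ge 0}(\text{level-}k_i\text{ difference in variable }i)\bigr)$ produces exactly $\sum_{\mathbf k \in \ZZdp} q_{\mathbf k}(f)$ with $q_{\mathbf k}(f) = \sum_{s \in Z^d(\mathbf k)}\lambda_{\mathbf k,s}(f)\varphi_{\mathbf k,s}$, where $\lambda_{\mathbf k,s}$ is the tensor product of the univariate functionals, matching the definition in the paper.

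The remaining point is unconditional convergence of the multi-indexed series $\sum_{\mathbf k} q_{\mathbf k}(f)$ in $C(\TTd)$ and the basis property. For convergence over the increasing family of finite sets $\{\mathbf k : \mathbf k \le \mathbf N\}$ this is precisely the statement $\mathcal Q_{\mathbf N}f \to f$ just established; to get convergence of arbitrary partial sums (and hence that $\Ff_d$ is a Schauder basis with the stated ordering, or unconditionally if one wants) I would note that the univariate blocks $q_k(f)$ satisfy $\|q_k(f)\|_{C(\TT)} \le C\,\omega(f, 2^{-k})$ — no, more carefully, $\|q_k(f)\|_{C(\TT)} \le \|Q_kf - Q_{k-1}f\|_{C(\TT)} \to 0$ — so the block structure forces the full series to converge; for merely continuous $f$ one argues with the ordering by $|\mathbf k|_1$ and uniform continuity. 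The main obstacle I expect is bookkeeping: making the tensor-product telescoping rigorous (justifying the interchange of the finite product of infinite sums, and handling the degenerate level $k_i = 0$ where $Z(0) = \{0\}$ and $\varphi_{0,0} \equiv 1$) and verifying that the coefficient functionals emerging from the iteration are literally the $\lambda_{\mathbf k,s}$ defined in the paper rather than some equivalent variant. The uniform-convergence estimate itself is standard approximation theory and should not pose difficulty.
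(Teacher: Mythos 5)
Your proposal is correct and follows essentially the same route as the paper, which simply invokes the classical univariate Faber--Schauder expansion (citing Kashin--Saakyan) and then says ``tensor product argument''; your write-up just fills in the details of both steps (dyadic interpolation, telescoping, identification of the coefficients with $\lambda_{k,s}$, and the product telescoping in $d$ variables). No substantive difference in approach.
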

\begin{proof}
For the univariate case ($d=1$), this lemma can be deduced from its well-known counterpart for non-periodic functions on $\II$ (see, e.g., \cite[Theorem 1, Chapter VI]{KS89}). For the multivariate case ($d>1$), it can be proven by the tensor product argument.
\end{proof}

Put $\ZZdpu:= \{k \in \ZZdp: \operatorname{supp} (k) = u\}$ for $u \subset [d]$, and use the convention $x^0 = 1$ for $x \in [0, \infty]$. 
\begin{theorem} \label{theorem[FaberRepresentation]}
Let $0 < p \le \infty$ and $0 < \alpha \le 2$. Then a function 
$f \in \Ha$ can be represented by the series \eqref{eq[FaberRepresentation]}
converging in the norm of $C(\TTd)$.
Moreover, we have for every $k \in \ZZdpu$,
\begin{equation} \label{ineq[|q_k(f)|_p]}
\|q_k(f) \|_p
  \ \le \ 
2^{-|u|} \, (p+1)^{- |u|/p} \, 2^{- \alpha|k|_1}.
\end{equation}  
\end{theorem}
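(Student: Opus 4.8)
The plan is to reduce everything to the univariate case and then tensor up. First, fix $u \subset [d]$ and $k \in \ZZdpu$. The key observation is that the functional $\lambda_{k,s}$ is a tensor product of univariate functionals: in each coordinate $i \in u$ we apply $\lambda_{k_i,s_i}(\cdot) = -\tfrac12\Delta_{2^{-k_i}}^2(\cdot, 2^{-k_i+1}s_i)$, while in the coordinates $i \notin u$ (where $k_i = 0$) we apply the point evaluation $\lambda_{0,0}(\cdot) = (\cdot)(0)$, which does nothing to the estimate. So I would first prove the one-dimensional bound: for $k > 0$,
\begin{equation} \nonumber
|\lambda_{k,s}(f)| \ \le \ \tfrac12\, |f|_{H^\alpha_\infty(\{i\})}\, 2^{-\alpha k},
\end{equation}
which is immediate from the definition of $\lambda_{k,s}$ as a second difference with step $2^{-k}$ and the definition of the Lipschitz--H\"older seminorm with $r = 2$ (legitimate since $\alpha \le 2$). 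Applying this coordinate by coordinate over $i \in u$ gives
\begin{equation} \nonumber
|\lambda_{k,s}(f)| \ \le \ 2^{-|u|}\, \|f\|_{\Ha}\, 2^{-\alpha|k|_1} \ \le \ 2^{-|u|}\, 2^{-\alpha|k|_1}
\end{equation}
for $f \in \Ua$, uniformly in $s \in Z^d(k)$.

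Next I would estimate $\|q_k(f)\|_p = \bigl\|\sum_{s \in Z^d(k)} \lambda_{k,s}(f)\,\varphi_{k,s}\bigr\|_p$. The crucial structural fact is that for fixed $k$ the hat functions $\{\varphi_{k,s}\}_{s \in Z^d(k)}$ have pairwise disjoint (up to measure-zero overlaps) supports: in each active coordinate $i \in u$, the functions $\varphi_{k_i,s_i}$ for distinct $s_i$ are supported on essentially disjoint dyadic intervals of length $2^{-k_i+1}$, and in the inactive coordinates $\varphi_{0,0} \equiv 1$. Hence for $0 < p < \infty$,
\begin{equation} \nonumber
\|q_k(f)\|_p^p \ = \ \sum_{s \in Z^d(k)} |\lambda_{k,s}(f)|^p\, \|\varphi_{k,s}\|_p^p,
\end{equation}
and for $p = \infty$ the sup is just $\max_s |\lambda_{k,s}(f)|\,\|\varphi_{k,s}\|_\infty$. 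One computes $\|\varphi_{k_i,s_i}\|_\infty = 1$ and $\|\varphi_{k_i,s_i}\|_p^p = 2^{-k_i+1}\int_0^1 M_2(x)^p\,dx = 2^{-k_i+1}\cdot\tfrac{2}{p+1}$ for the active coordinates (the factor $\int_0^1(1-|x-1|)_+^p\,dx$ over the support of length $2$, rescaled), so $\|\varphi_{k,s}\|_p^p = \prod_{i \in u} 2^{-k_i}\cdot\tfrac{2}{p+1} \cdot 2^{|u|} \cdot (\text{normalization})$; being careful with constants this gives $\|\varphi_{k,s}\|_p = 2^{|u|/p}(p+1)^{-|u|/p}\,2^{-|k|_1/p}$. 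The number of terms in $Z^d(k)$ is $\prod_{i \in u} 2^{k_i-1} = 2^{-|u|}2^{|k|_1}$.

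Putting these together: for $0 < p < \infty$,
\begin{equation} \nonumber
\|q_k(f)\|_p^p \ \le \ \bigl(2^{-|u|}2^{|k|_1}\bigr)\cdot\bigl(2^{-|u|}2^{-\alpha|k|_1}\bigr)^p\cdot 2^{|u|}(p+1)^{-|u|}2^{-|k|_1} \ = \ 2^{-|u|p}(p+1)^{-|u|}2^{-\alpha p|k|_1},
\end{equation}
and taking $p$th roots yields exactly \eqref{ineq[|q_k(f)|_p]}; the case $p = \infty$ follows by letting $p \to \infty$ or directly. Finally, convergence of the Faber series in $C(\TTd)$ for $f \in \Ha \subset C(\TTd)$ is already furnished by Lemma \ref{lemma[convergence(d)]}, so nothing further is needed there. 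The only genuinely delicate points are bookkeeping the constants in $\|\varphi_{k,s}\|_p$ — in particular tracking the support-length factors $2^{-k_i+1}$ against the cardinality $2^{k_i-1}$ of $Z(k_i)$ so that the powers of $2$ cancel correctly — and making sure the disjoint-support identity for $\|q_k(f)\|_p^p$ is applied with the right normalization of $M_2$; everything else is a routine tensorization of the one-dimensional second-difference estimate.
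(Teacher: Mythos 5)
Your proposal is correct and follows essentially the same route as the paper: bound $|\lambda_{k,s}(f)|$ by $2^{-|u|}2^{-\alpha|k|_1}$ via the tensorized second-difference/H\"older-seminorm estimate, then exploit the disjoint supports of $\{\varphi_{k,s}\}_{s\in Z^d(k)}$ at fixed $k$ together with the count $|Z^d(k)|=2^{-|u|}2^{|k|_1}$ and the value of $\|\varphi_{k,s}\|_p$, with convergence supplied by Lemma \ref{lemma[convergence(d)]}. The only blemish is the intermediate value you quote for $\|\varphi_{k_i,s_i}\|_p^p$ (the true value is $2^{-k_i}/(p+1)$, smaller than what you wrote), but since your final product $2^{|u|}(p+1)^{-|u|}2^{-|k|_1}$ is an over-estimate the stated inequality \eqref{ineq[|q_k(f)|_p]} still follows.
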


\begin{proof} The first part of the lemma on representation and convergence is in Lemma \ref{lemma[convergence(d)]}. Let us prove \eqref{ineq[|q_k(f)|_p]}. We first consider the case $p = \infty$.
Since $0 \le  \sum_{s \in Z^d(k)}  \varphi_{k,s} (x) \le 1 $, we have for every $k \in \ZZdpu$ and 
$x \in \TTd$,
\begin{equation*} 
\begin{aligned}
|q_k(f)(x)|
 \ &\le \ 
 \sup_{s \in Z^d(k)} \, |\lambda_{k,s}(f)|
 \sum_{s \in Z^d(k)}  \varphi_{k,s} (x)
 \ \le \ 
\sup_{s \in Z^d(k)} \, |\lambda_{k,s}(f)| \\[1.5ex]
 \ &\le \ 
\sup_{s \in Z^d(k)} \,\Big|\prod_{j \in u} \big[-\frac{1}{2} \Delta_{2^{-k_j}}^2 f(x^{k,s}(u))\big]\Big| 
 \ \le \ 
2^{- |u|}\, \omega_r^u(f, 2^{-k}) 
 \ \le \ 
2^{- |u|} \, 2^{- \alpha|k|_1} \, |f|_{\Hau}.
\end{aligned}
\end{equation*} 
This proves \eqref{ineq[|q_k(f)|_p]} for $p= \infty$.  Hence, if $1 \le p < \infty$, 
we have also that for every $k \in \ZZdpu$,
\begin{equation*} 
\begin{aligned}
\|q_k(f) \|_p^p
 \ &= \ 
 \int_{\TTd}\Big| \sum_{s \in Z^d(k)} \, \lambda_{k,s}(f) \varphi_{k,s} (x) \Big|^p \, dx  
  \ \le \ 
 \sup_{s \in Z^d(k)} \, |\lambda_{k,s}(f)|^p \, 
\int_{\TTd}\Big| \sum_{s \in Z^d(k)} \varphi_{k,s}(x) \Big|^p \, dx  \\[1.5ex]
 \ &= \ 
 \sup_{s \in Z^d(k)} \, |\lambda_{k,s}(f)|^p \, 
 |Z^d(k)| \int_{\TTd}|\varphi_{k,0}(x)|^p \, dx  \\[1.5ex]
\ &= \ 
 \sup_{s \in Z^d(k)} \, |\lambda_{k,s}(f)|^p \, 
 |Z^d(k)| \prod_{j \in u}2 \int_{0}^{2^{-k_j}}|2^{k_j}x_j|^p \, dx_j  \\[1.5ex]
 \ &\le \ 
 \big[2^{- |u|} \, 2^{- \alpha|k|_1} \, |f|_{\Hau}\big]^p \, 
 \big[2^{-|u|} 2^{|k|_1}\big] \big[2^{|u|} (p+1)^{-|u|} 2^{-|k|_1}\big] \\[1.5ex]
 \ &= \ 
2^{-|u|} \, (p+1)^{- |u|} \, 2^{- p\alpha|k|_1} \, |f|_{\Hau}^p.
\end{aligned}
\end{equation*}
This proves \eqref{ineq[|q_k(f)|_p]} for $1 \le p < \infty$. Finally, the case  $0< p < 1$ can be proven in a similar way starting from the inequality
\begin{equation*} 
\|q_k(f) \|_p^p
 \ \le \ 
 \sum_{s \in Z^d(k)} \, |\lambda_{k,s}(f)|^p \int_{\TTd}| \varphi_{k,s} (x)|^p \, dx.
 \end{equation*}
The proof is complete. 
\end{proof}

\subsection{Auxiliary lemmas}

For  $m,n \in \NN$ with $m \ge n$, we introduce the function $F_{m,n}: (0,1) \to \RR$ by 
\begin{equation}  \nonumber
F_{m,n}(t)
:= \ 
\sum_{s=0}^\infty \binom{m+s}{n} t^s.
\end{equation}
From the definition it follows that $F_{m,n}(t)= \binom{m}{n} + tF_{m+1,n}(t)$ and consequently,
\begin{equation}  \label{[<F_{m,n}<]}
tF_{m+1,n}(t)
\ < \ 
F_{m,n}(t)
\ < \
F_{m+1,n}(t), \ t \in (0,1).
\end{equation}

We will need following equation proven in \cite[(3.67), p.29]{BG04}.
\begin{equation}  \label{eq[BG]}
F_{m,n}(t)
\ = \
\frac{1}{1-t}\, \sum_{s=0}^n \binom{m}{s}\, \biggl(\frac{t}{1-t}\biggl)^{n-s}.
\end{equation}
For nonnegative integer $n$, we define the function 
\begin{equation} \nonumber
b_n(t)
:= \
\begin{cases}
(1-2t)^{-1}, \ & t<1/2,\\[1.5ex]
2(n+1), \ & t=1/2,\\[1.5ex]
(2t-1)^{-1}[t/(1-t)]^{n+1}, \ & t>1/2.
\end{cases}
\end{equation}

\begin{lemma} \label{lemma[sum(1)]}
Let $m,n \in \NN$, $m \ge 2n$, and $0 < t <1$. Then we have
\begin{equation}  \label{eq[sum(1)]}
F_{m,n}(t)
\ \le  \
\binom{m}{n}\, b_n(t).
\end{equation}
\end{lemma}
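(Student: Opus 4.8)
The plan is to use the explicit formula \eqref{eq[BG]} for $F_{m,n}(t)$ and estimate each of the $n+1$ terms in the sum $\sum_{s=0}^n \binom{m}{s}(t/(1-t))^{n-s}$ against $\binom{m}{n}$, then sum a geometric-type series. I would split the argument according to the three regimes $t<1/2$, $t=1/2$, $t>1/2$ appearing in the definition of $b_n(t)$, since in each regime the comparison $t/(1-t)$ versus $1$ behaves differently.

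First I would handle the case $t<1/2$, where $r:=t/(1-t)<1$. Writing \eqref{eq[BG]} as $F_{m,n}(t)=\frac{1}{1-t}\sum_{j=0}^n \binom{m}{n-j} r^{j}$, I would observe that for $m\ge 2n$ the binomial ratio $\binom{m}{n-j}/\binom{m}{n}=\prod_{i=0}^{j-1}\frac{n-i}{m-n+1+i}\le 1$ (here the hypothesis $m\ge 2n$ guarantees each factor is at most $1$), so the sum is bounded by $\binom{m}{n}\sum_{j=0}^n r^j\le \binom{m}{n}\frac{1}{1-r}=\binom{m}{n}\frac{1-t}{1-2t}$. Multiplying by $\frac{1}{1-t}$ gives exactly $\binom{m}{n}(1-2t)^{-1}=\binom{m}{n}b_n(t)$. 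The case $t=1/2$ is the boundary: here $r=1$, so the same monotonicity bound on the binomial ratios gives $F_{m,n}(1/2)\le \frac{1}{1-t}\binom{m}{n}(n+1)=2(n+1)\binom{m}{n}=\binom{m}{n}b_n(1/2)$.

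The case $t>1/2$ is the one I expect to be the main obstacle, because now $r=t/(1-t)>1$ and the largest term in $\sum_{j=0}^n\binom{m}{n-j}r^j$ is the last one, $j=n$, contributing $r^n=(t/(1-t))^n$ rather than $1$. Here I would factor out $r^n$ and write the sum as $r^n\sum_{j=0}^n \binom{m}{n-j} r^{j-n}=r^n\sum_{i=0}^n\binom{m}{i}r^{-(n-i)}$; again using $\binom{m}{i}\le\binom{m}{n}$ for $i\le n$ and $m\ge 2n$, this is at most $\binom{m}{n}r^n\sum_{\ell=0}^n r^{-\ell}\le \binom{m}{n}r^n\frac{1}{1-r^{-1}}=\binom{m}{n}r^{n+1}\frac{1}{r-1}$. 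Since $r-1=(2t-1)/(1-t)$ and $\frac{1}{1-t}\cdot r^{n+1}\frac{1-t}{2t-1}=(2t-1)^{-1}(t/(1-t))^{n+1}$, this yields precisely $\binom{m}{n}b_n(t)$ after multiplying by the leading $\frac{1}{1-t}$.

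The only delicate point throughout is the uniform bound $\binom{m}{n-j}\le\binom{m}{n}$ for $0\le j\le n$, equivalently $\binom{m}{i}\le\binom{m}{n}$ for $0\le i\le n$; this holds because $\binom{m}{\cdot}$ is increasing on $\{0,\dots,\lfloor m/2\rfloor\}$ and the hypothesis $m\ge 2n$ places all indices $i\le n$ in that increasing range. With that observation in hand, the three cases are routine geometric-series estimates, and one checks that in each regime the resulting bound matches the corresponding branch of $b_n(t)$ exactly, completing the proof.
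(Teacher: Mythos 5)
Your proof is correct and follows essentially the same route as the paper: both start from \eqref{eq[BG]}, use $\binom{m}{s}\le\binom{m}{n}$ for $s\le n$ (which is where $m\ge 2n$ enters), and then bound the resulting geometric sum separately in the three regimes $t<1/2$, $t=1/2$, $t>1/2$; the paper merely writes the geometric sum in the variable $x=(1-t)/t$ rather than $r=t/(1-t)$. (A harmless index slip aside --- your rewritten sum should read $r^n\sum_{i=0}^n\binom{m}{i}r^{-i}$ --- the estimates all go through.)
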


\begin{proof}
We have
 \begin{equation}  \nonumber
\sum_{s=0}^n \binom{m}{s} 
\ \le \
(n+1) \, \binom{m}{n}.
\end{equation} 
Hence, by  \eqref{eq[BG]} the case $t=1/2$ of the inequality  \eqref{eq[sum(1)]} is proven. 
Next, we have for $x>0$ and $x\not=1$,
\begin{equation}  \nonumber
\sum_{s=0}^n \binom{m}{s} x^s
\ \le \
\binom{m}{n}\,\sum_{s=0}^n x^s
\ = \
\binom{m}{n}\,\frac{x^{n+1}-1}{x-1},
\end{equation}
and
\begin{equation} \nonumber
\frac{x^{n+1}-1}{x-1}
\ \le \
\begin{cases}
(x-1)^{-1} x^{n+1}, \ & x>1,\\[1.5ex]
(1-x)^{-1}, \ & x<1.
\end{cases}
\end{equation}
By using the last two inequalities for $x = (1-t)/t$ from \eqref{eq[BG]} we prove the cases $t<1/2$ and $t>1/2$ of the inequality  \eqref{eq[sum(1)]}. 
\end{proof}

 \begin{lemma} \label{lemma[sum(2)]}
Let $m,n \in \NN$, $m \ge n$, and $0 < t <1$. Then we have
\begin{equation}  \nonumber
F_{m,n}(t)
\ \le  \
\frac{1}{1-t}\, \biggl(\frac{t}{1-t}\biggl)^n\, \exp\biggl(\frac{1-t}{t}\biggl)\, m^n
\end{equation}
\end{lemma}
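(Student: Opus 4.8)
**Proof proposal for Lemma 2.3 (the $F_{m,n}(t) \le \frac{1}{1-t}\big(\frac{t}{1-t}\big)^n \exp\big(\frac{1-t}{t}\big)\, m^n$ bound).**

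The plan is to start from the closed form \eqref{eq[BG]}, namely
$F_{m,n}(t) = \frac{1}{1-t}\sum_{s=0}^n \binom{m}{s}\big(\frac{t}{1-t}\big)^{n-s}$,
and pull out the dominant factor $\frac{1}{1-t}\big(\frac{t}{1-t}\big)^n$ corresponding to $s=0$. After factoring, the remaining sum becomes $\sum_{s=0}^n \binom{m}{s}\big(\frac{1-t}{t}\big)^{s}$, so the task reduces to showing $\sum_{s=0}^n \binom{m}{s}\big(\frac{1-t}{t}\big)^{s} \le \exp\big(\frac{1-t}{t}\big)\, m^n$. Write $x := (1-t)/t > 0$; I want $\sum_{s=0}^n \binom{m}{s} x^s \le m^n \exp(x)$.

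The key step is the elementary bound $\binom{m}{s} \le \frac{m^s}{s!} \le \frac{m^n}{s!}$, valid because $m \ge n \ge s$ (the second inequality uses $m \ge 1$, so $m^s \le m^n$ for $s \le n$). Summing gives $\sum_{s=0}^n \binom{m}{s} x^s \le m^n \sum_{s=0}^n \frac{x^s}{s!} \le m^n \sum_{s=0}^\infty \frac{x^s}{s!} = m^n \exp(x)$. Substituting back $x = (1-t)/t$ and reinstating the factored constant yields exactly the claimed inequality.

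I do not anticipate a serious obstacle here; the only point requiring a little care is making sure the step $m^s \le m^n$ is justified, which needs $m \ge 1$ (true since $m \in \NN$) together with $s \le n$ — so the hypothesis $m \ge n$ is used in precisely the way one expects, and one does not even need the stronger $m \ge 2n$ that Lemma 2.2 required. If one wanted a cleaner exposition, one could alternatively bound $\sum_{s=0}^n \binom{m}{s} x^s$ against $\sum_{s=0}^n \binom{n}{s} m^{n-s} x^s \cdot (\text{something})$, but the direct $\binom{m}{s}\le m^s/s!$ route is shortest and self-contained. I would write the proof in three displayed lines: the factorization from \eqref{eq[BG]}, the termwise binomial estimate, and the resummation to $\exp$.
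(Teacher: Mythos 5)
Your proposal is correct and follows essentially the same route as the paper: both factor $\frac{1}{1-t}\bigl(\frac{t}{1-t}\bigr)^n$ out of the closed form \eqref{eq[BG]} and then bound $\sum_{s=0}^n \binom{m}{s}x^s$ with $x=(1-t)/t$ via $\binom{m}{s}\le m^s/s!\le m^n/s!$ and resummation to $m^n e^x$. Your remark that only $m\ge n$ (not $m\ge 2n$) is needed here is also consistent with the paper's statement.
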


\begin{proof}
We have for $x>0$,
\begin{equation}  \nonumber
\sum_{s=0}^n \binom{m}{s} \, x^s
\ = \
\sum_{s=0}^n \frac{m!}{s! (m-s)!} \, x^s
\ \le \
\sum_{s=0}^n m^s \, \frac{x^s}{s!}
\ \le \
m^n \, e^x.
\end{equation}
Using this estimate from \eqref{eq[BG]} we deduce the lemma. 
\end{proof}



\subsection{Upper bounds}
\label{Upper bounds, alpha<2}

In this subsection, we employ the representation by the Faber series \eqref{eq[FaberRepresentation]}
to construct linear sampling algorithms $\mathring{S}_m(\mathring{\Phi}_m,\cdot)$ and $S^\nu_m(\Phi_m^\nu,\cdot)$
for functions from $\Uas$ and $\Uan$, respectively. By use of Theorem~\ref{theorem[FaberRepresentation]}
and auxiliary lemmas in the previous subsection we establish upper bounds for the error of the sampling recovery by these algorithms and therefore, for $\mathring{s}_m(\Uas)_p$ and $s^\nu_m(\Uan)_p$.

We first construct linear sampling algorithms functions from $\Uas$. 
By the definition we can see that
\begin{equation*}
\Uas
\ = \ 
\biggl\{f \in \Ua: f \ = \ \sum_{k \in \NNd} q_k(f) \biggl\}.
\end{equation*}
For $m \in {\ZZ}_+$, we introduce the operator $\mathring{R}_m$ for $f \in \Uas$ by 
\begin{equation*}
\mathring{R}_m(f) 
:= \ 
\sum_{k \in \NNd: \, |k|_1 \le m} q_k(f)
\ = \
\sum_{k \in \NNd: \, |k|_1 \le m} \ \sum_{s \in Z^d(k)} \lambda_{k,s}(f)\varphi_{k,s}.
\end{equation*} 
The operator $\mathring{R}_m$ defines a linear sampling algorithm $\mathring{S}_m(\mathring{\Psi}_m,f)$
 on the grid $\mathring{G}(m)$ by 
\begin{equation*} 
\mathring{R}_m(f) 
\ = \ 
\mathring{S}_m(\mathring{\Psi}_m,f) 
\ = \ 
\sum_{\xi \in \mathring{G}(m)} f(\xi) \psi_{\xi},
\end{equation*} 
where $\mathring{\Psi}_m:= \{\psi_{\xi}\}_{\xi \in \mathring{G}(m)}$,
\begin{equation*} 
\psi_\xi(x)
\ = \
\prod_{i=1}^d \psi_{ 2^{-k_i} s_i}(x_i), \ \xi = 2^{-k}s, \ k \in \NNd, \ s \in I^d(k), 
\end{equation*}
and the univariate functions $\psi_{k,s}$, $k \in \NN, \ s \in I^1(k)$, are defined by
\begin{equation*} 
\psi_{ 2^{-k} s}
\ =  \ 
\begin{cases}
1- \varphi_{0,0}, & \ k=1, \ s=0 \\[1.5ex]
\varphi_{0,0}, & \ k=1, \ s=1 \\[1.5ex]
- \frac{1}{2} ( \varphi_{k,j} + \varphi_{k,j-1}), & \ k>1, \ s=2j,  \ 0\le j<2^{k-1}-1, \\[1.5ex]
\varphi_{k,j}, & \ k>1, \ s = 2j+1, \ 0<j<2^{k-1}-1.
\end{cases}
\end{equation*}

For $0 < p \le \infty$, $\alpha >0$ and $m \ge l - 1$, put  
\[
b \ = \ b(\alpha,p):= \ 2(2^\alpha - 1)(p+1)^{1/p}, 
\]
\begin{equation}  \nonumber
\beta(l,m)
\ = \
\beta(\alpha,l,m)
:= \
(2^\alpha - 1)^{-l}\, \sum_{s=0}^{l-1} \binom{m}{s}\, (2^\alpha - 1)^s, \quad 
\beta(0,m)
:= \ 1,
\end{equation}
and 
\begin{equation} \nonumber
a^\circ(d)
\ = \
a^\circ(\alpha, p, d)
:= \ |2^\alpha -2|^{- |\operatorname{sgn}(\alpha - 1)|}\, [2(p+1)^{1/p}]^{-d} \times
\begin{cases}
1, \ & \alpha > 1,\\[1.5ex]
d, \ & \alpha = 1,\\[1.5ex]
(2^\alpha -1)^{-d}, \ & \alpha < 1.
\end{cases}
\end{equation}

The following theorem gives  upper bounds for the error $\|f - \mathring{R}_m(f)\|_p$ for $f \in \Uas$ and therefore, for $\mathring{s}_m(\Uas)_p$.

\begin{theorem} \label{theorem[|f - R_m(f)|_p,Has]}
Let  $0 < p \le \infty$ and $0 < \alpha \le 2$. Then we have for every
$m \ge d$, 
\begin{equation}  \label{ineq[|f - R_m(f)|_p,Has(1)]}
\begin{aligned}
\mathring{s}_m(\Uas)_p
\ \le \ 
\sup_{f \in \Uas} \|f - \mathring{R}_m(f)\|_p
\ &\le \  
 [2(p+1)^{1/p}]^{-d}\, \beta(d,m)\, 2^{- \alpha m} \\[1ex]
\ &\le \  
\exp(2^\alpha - 1) \, b^{-d} \,
2^{- \alpha m} \, m^{d-1}. 
\end{aligned}
\end{equation}
Moreover, if in addition, $m \ge 2(d-1)$,
\begin{equation}  \label{ineq[|f - R_m(f)|_p,Has(2)]}
\mathring{s}_m(\Uas)_p
\ \le \ 
\sup_{f \in \Uas} \|f - \mathring{R}_m(f)\|_p
\ \le \  
 a^\circ(d)\, 2^{- \alpha m} \, \binom{m}{d-1}. 
\end{equation}
\end{theorem}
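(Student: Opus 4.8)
The plan is to control the tail $f - \mathring{R}_m(f) = \sum_{k \in \NNd,\ |k|_1 > m} q_k(f)$ by using the per-block bound \eqref{ineq[|q_k(f)|_p]} of Theorem~\ref{theorem[FaberRepresentation]} and then summing the resulting geometric-type series via the auxiliary lemmas of the previous subsection. Since $f \in \Uas$ forces $\supp(k) = [d]$ for every active block (so $|u| = d$ and every $k_j \ge 1$), we have, by the quasi-norm triangle inequality for $\|\cdot\|_p$ when $p \ge 1$ (and the analogous $p$-triangle inequality when $p < 1$, which only improves constants), that
\begin{equation} \nonumber
\|f - \mathring{R}_m(f)\|_p
\ \le \
\sum_{k \in \NNd:\ |k|_1 > m} \|q_k(f)\|_p
\ \le \
2^{-d}\,(p+1)^{-d/p} \sum_{k \in \NNd:\ |k|_1 > m} 2^{-\alpha |k|_1}.
\end{equation}
The combinatorial step is to count, for each integer $j > m$, the number of $k \in \NNd$ with $|k|_1 = j$, which is $\binom{j-1}{d-1} = \binom{j-1}{j-d}$; reindexing $j = m+1+s$ gives $\sum_{s \ge 0} \binom{m+s}{m+1-d+s}$-type sums, which after the substitution matching $F_{m,n}$ with $t = 2^{-\alpha}$ and $n = d-1$ becomes a tail of exactly the form controlled by $F_{m,n}(t)$.

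First I would carry out the counting carefully: writing $k_j = 1 + l_j$ with $l_j \ge 0$, the condition $|k|_1 = j$ becomes $|l|_1 = j - d$, so the number of such $k$ is $\binom{j-1}{d-1}$. Then
\begin{equation} \nonumber
\sum_{k \in \NNd:\ |k|_1 > m} 2^{-\alpha |k|_1}
\ = \
\sum_{j > m} \binom{j-1}{d-1}\, 2^{-\alpha j}
\ = \
2^{-\alpha m}\,(2^\alpha-1)^{-1}\sum_{s \ge 0}\binom{m+s}{d-1}\,2^{-\alpha s}\cdot(\text{adjust}),
\end{equation}
and a short manipulation identifies the sum with $2^{-\alpha m}\,\beta(d,m)$ after pulling out the geometric factor; here $\beta(d,m) = (2^\alpha-1)^{-d}\sum_{s=0}^{d-1}\binom{m}{s}(2^\alpha-1)^s$ is exactly the closed form \eqref{eq[BG]} for $F_{m,d-1}(2^{-\alpha})$ up to the normalization built into $\beta$. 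This yields the first inequality in \eqref{ineq[|f - R_m(f)|_p,Has(1)]}. For the second inequality, I would apply Lemma~\ref{lemma[sum(2)]} with $t = 2^{-\alpha}$, $n = d-1$: it gives $F_{m,d-1}(2^{-\alpha}) \le (1-t)^{-1}(t/(1-t))^{d-1}\exp((1-t)/t)\,m^{d-1}$; since $(1-t)/t = 2^\alpha - 1$ and $t/(1-t) = (2^\alpha-1)^{-1}$, and absorbing the $[2(p+1)^{1/p}]^{-d}$ prefactor together with the powers of $(2^\alpha-1)$ into $b^{-d} = [2(2^\alpha-1)(p+1)^{1/p}]^{-d}$, one reads off $\exp(2^\alpha-1)\,b^{-d}\,2^{-\alpha m}\,m^{d-1}$ (noting $\exp((1-t)/t) = \exp(2^\alpha-1)$ and the leftover factor $(1-t)^{-1}(2^\alpha-1) = (1 - 2^{-\alpha})^{-1}(2^\alpha - 1) = 2^\alpha \le $ a harmless constant already covered, or more carefully folded into the bookkeeping). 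For the refined bound \eqref{ineq[|f - R_m(f)|_p,Has(2)]} under the extra hypothesis $m \ge 2(d-1)$, I would instead invoke Lemma~\ref{lemma[sum(1)]} with $m' = m$, $n = d-1$, $t = 2^{-\alpha}$ (legitimate since $m \ge 2(d-1) = 2n$), giving $F_{m,d-1}(2^{-\alpha}) \le \binom{m}{d-1}\,b_{d-1}(2^{-\alpha})$; then I would evaluate $b_{d-1}(2^{-\alpha})$ in the three regimes $2^{-\alpha} < 1/2$ (i.e. $\alpha > 1$), $= 1/2$ ($\alpha = 1$), $> 1/2$ ($\alpha < 1$) and check that $[2(p+1)^{1/p}]^{-d}\,(2^\alpha-1)^{-d}\,b_{d-1}(2^{-\alpha})$ matches the piecewise definition of $a^\circ(d)$ term by term — in the $\alpha>1$ case $b_{d-1}(2^{-\alpha}) = (1 - 2^{1-\alpha})^{-1}$ and $|2^\alpha-2|^{-1}(2^\alpha-1)^{-d}\cdots$ reproduces it, etc.

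The main obstacle I anticipate is purely bookkeeping: matching the normalization conventions so that the factor $(2^\alpha-1)^{-d}$ versus $(2^\alpha-1)^{-1}$ coming out of the geometric resummation lands consistently in $\beta(d,m)$, $b$, and $a^\circ(d)$, and verifying the three-case evaluation of $b_{d-1}(2^{-\alpha})$ against the three-case definition of $a^\circ(d)$ including the $|2^\alpha - 2|^{-|\sgn(\alpha-1)|}$ and the extra $d$ at $\alpha = 1$ (which arises precisely because $b_n(1/2) = 2(n+1) = 2d$ is linear in $d$). A secondary point needing care is the $p < 1$ case, where one starts from $\|q_k(f)\|_p^p \le \sum_{s}|\lambda_{k,s}(f)|^p \int |\varphi_{k,s}|^p$ as in the proof of Theorem~\ref{theorem[FaberRepresentation]} and sums $\|\cdot\|_p^p$ over blocks; since the per-block estimate \eqref{ineq[|q_k(f)|_p]} already holds for all $0 < p \le \infty$, the only change is that the triangle inequality is replaced by $\|\sum g_k\|_p^p \le \sum \|g_k\|_p^p$, and the resulting series is again of $F_{m,n}$ type with $t = 2^{-p\alpha}$ — one then takes $p$-th roots at the end, and should double-check that this does not worsen the stated constants (it does not, since $2^{-p\alpha}$ still lies in $(0,1)$ and the $m \ge 2(d-1)$ hypothesis is unchanged).
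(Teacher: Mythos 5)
Your proposal is correct and follows essentially the same route as the paper's proof: the per-block bound of Theorem~\ref{theorem[FaberRepresentation]} with $u=[d]$, the triangle inequality over the tail $|k|_1>m$, the composition count $\binom{j-1}{d-1}$ identifying the sum with $2^{-\alpha(m+1)}F_{m,d-1}(2^{-\alpha})=2^{-\alpha m}\beta(d,m)$ via \eqref{eq[BG]}, and then Lemma~\ref{lemma[sum(2)]} respectively Lemma~\ref{lemma[sum(1)]} for the two displayed upper bounds. The only point to treat with more care than your parenthetical suggests is the case $0<p<1$: the $p$-triangle inequality leads to $F_{m,d-1}(2^{-p\alpha})^{1/p}$ rather than $F_{m,d-1}(2^{-\alpha})$, and since $F_{m,d-1}(2^{-p\alpha})\ge F_{m,d-1}(2^{-\alpha})\ge 1$ this is \emph{not} automatically dominated by the stated constant, so the "slight modification" (which the paper also leaves implicit) needs an actual verification rather than the assertion that nothing worsens.
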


\begin{proof}
Let us prove the lemma for $1 \le p \le \infty$. It can be proven in a similar way with a slight modification for $0 < p <1$.  From Theorem \ref{theorem[FaberRepresentation]},  \eqref{eq[BG]} and Lemma \ref{lemma[sum(2)]} it follows that for every  $f \in \Uas$ and and every $m \ge d$,  
\begin{equation} \label{[|f - R_m(f)|_p<]}
\begin{aligned}
\|f - \mathring{R}_m(f)\|_p
 &\le  
\sum_{k \in \NNd: \, |k|_1 > m}\|q_k(f) \|_p 
 \le  
\sum_{k \in \NNd: \, |k|_1 > m}
2^{-d} \, (p+1)^{- d/p} \, 2^{- \alpha|k|_1}  \\[1.5ex]
 &=  
2^{-d} \, (p+1)^{- d/p} \sum_{k \in \NN^d: \, |k|_1 > m}
  2^{- \alpha|k|_1}  
 =  
 2^{-d} \, (p+1)^{- d/p} \sum_{j=1}^\infty
 \binom{m+j-1}{d-1}\, 2^{- \alpha (m+j)}  \\[1.5ex]
 &=  
 2^{- \alpha (m+1)}\, 2^{-d} \, (p+1)^{- d/p} 
F_{m,d-1}(2^{- \alpha})
 = 
 [2(p+1)^{1/p}]^{-d}\, 2^{- \alpha m} \beta(d,m)  \\[1.5ex]
 &\le
\exp(2^\alpha - 1) \, b^{-d} \,
2^{- \alpha m} \, m^{d-1}. 
\end{aligned}
\end{equation}
The inequalities  \eqref{ineq[|f - R_m(f)|_p,Has(1)]} are proven. The inequality \eqref{ineq[|f - R_m(f)|_p,Has(2)]} can be derived from \eqref{[|f - R_m(f)|_p<]} by applying  Lemma \ref{lemma[sum(1)]} for $t=2^{- \alpha}$.
\end{proof}

Notice that some upper bounds of $\|f - \mathring{R}_m(f)\|_p$ with $p = 2, \infty$ for functions with zero boundary condition from the Sobolev space $H^2(\IId)$ were established in \cite{BG04}.

We next construct linear sampling algorithms functions from $\Uan$. The definition of $\Uan$ implies that 
\begin{equation*}
\Uan
\ = \ 
\biggl\{f \in \Ua: \ \exists u \subset [d], \ |u| = \nu: \ f \ = \ \sum_{v \subset u}\,\sum_{k \in \ZZdp(v)} q_k(f) \biggl\}.
\end{equation*}
For $m \in {\ZZ}_+$, we define the operator $R^\nu_m$ by 
\begin{equation*}
R^\nu_m(f) 
:= \ 
\sum_{k \in \ZZdp: \, |\supp(k)| \le \nu, \ |k|_1 \le m} q_k(f)
\ = \
\sum_{k \in \ZZdp: \, |\supp(k)| \le \nu, \ |k|_1 \le m} \ \sum_{s \in Z^d(k)} \lambda_{k,s}(f)\varphi_{k,s}.
\end{equation*} 
For functions $f$ on $\TTd$, $R^\nu_m$ defines a linear sampling algorithm $S^\nu_m(\Psi^\nu_m,f)$
 on the Smolyak grid $G^\nu(m)$ by 
\begin{equation*} 
R^\nu_m(f) 
\ = \ 
S^\nu_m(\Psi^\nu_m,f) 
\ = \ 
\sum_{\xi \in G^\nu(m)} f(\xi) \psi_{\xi}, 
\end{equation*} 
where $\Psi^\nu_m:= \{\psi_{\xi}\}_{\xi \in G^\nu(m)}$. Notice that the construction of the operator $R^\nu_m$ is quite reasonable since in our setting we assume that the active variables of a $f \in \Uan$ are unknown.

Put
\[
\gamma(\nu,m)
\ = \
\gamma(\alpha,p,\nu,m)
:= \
 \sum_{l=0}^\nu \binom{\nu}{l}\, 2^{-l} \, (p+1)^{- l/p} \beta(\alpha,l,m),
\]
and  
\begin{equation} \nonumber
a(\alpha, p, \nu)
\ = \
a(\nu)
:= \ |2^\alpha -2|^{- |\operatorname{sgn}(\alpha - 1)|}\,[1 + 1/2(p+1)^{1/p}]^\nu \times
\begin{cases}
1, \ & \alpha > 1,\\[1.5ex]
\nu, \ & \alpha = 1,\\[1.5ex]
(2^\alpha -1)^{-\nu}, \ & \alpha < 1.
\end{cases}
\end{equation}

\begin{theorem} \label{theorem[|f - R^nu_m(f)|_p]}
Let $0 < p \le \infty$, $0 < \alpha \le 2$ and $1\le \nu \le d$. Then we have for every $m \ge \nu$, 
\begin{equation}  \label{ineq[|f - R^nu_m(f)|_p](1)}
\begin{aligned}
s^\nu_m(\Uan)_p
\ \le \ 
\sup_{f \in \Uan} \|f - R^\nu_m(f)\|_p
\ &\le \
  \gamma(\nu,m) \, 2^{- \alpha m}\\[1ex]
\ &\le \  \exp(2^\alpha - 1) \, (1+1/b)^\nu \, 2^{- \alpha m}\, m^{\nu-1}.
\end{aligned}
\end{equation}
Moreover,  if in addition, $m \ge 2(\nu-1)$,
\begin{equation}  \label{ineq[|f - R^nu_m(f)|_p](2)} 
s^\nu_m(\Uan)_p
\ \le \ 
\sup_{f \in \Uan} \|f - R^\nu_m(f)\|_p
\ \le \  
a(\nu)\,  2^{- \alpha m}\, \binom{m}{\nu-1}.
\end{equation}
\end{theorem}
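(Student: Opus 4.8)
The plan is to follow the structure of the proof of Theorem~\ref{theorem[|f - R_m(f)|_p,Has]}, inserting one extra combinatorial layer that reflects the unknown location of the (at most) $\nu$ active variables. Since the operator $R^\nu_m$ is realized by the linear sampling algorithm $S^\nu_m(\Psi^\nu_m,\cdot)$ on the grid $G^\nu(m)$, the definition of $s^\nu_m(\Uan)_p$ as an infimum over all such algorithms immediately yields the leftmost inequality in each line, $s^\nu_m(\Uan)_p \le \sup_{f\in\Uan}\|f - R^\nu_m(f)\|_p$, so everything reduces to estimating this supremum. Fix $f\in\Uan$ and choose $u\subset[d]$ with $|u|=\nu$ as in the displayed characterization of $\Uan$; by that characterization together with the uniqueness of the Faber representation of Theorem~\ref{theorem[FaberRepresentation]}, $q_k(f)=0$ whenever $\supp(k)\not\subset u$. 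Every $k$ with $\supp(k)\subset u$ satisfies $|\supp(k)|\le\nu$, and the constant term $q_0(f)$ belongs to $R^\nu_m(f)$; hence
\[
f - R^\nu_m(f) \ = \ \sum_{\varnothing\ne v\subset u}\ \sum_{k\in\ZZdp(v):\,|k|_1>m} q_k(f).
\]

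I would then apply the triangle inequality (its quasi-triangle version if $p<1$) and the bound \eqref{ineq[|q_k(f)|_p]} of Theorem~\ref{theorem[FaberRepresentation]}: for each $v$ with $|v|=l\ge1$ this leaves $\sum_{k\in\ZZdp(v):\,|k|_1>m}2^{-\alpha|k|_1}$. Counting the points of $\ZZdp(v)$ with $|k|_1=n$ as $\binom{n-1}{l-1}$ and then using \eqref{eq[BG]} identifies this sum with $2^{-\alpha(m+1)}F_{m,l-1}(2^{-\alpha})=2^{-\alpha m}\beta(\alpha,l,m)$. Since there are $\binom{\nu}{l}$ subsets $v\subset u$ of each size $l$,
\[
\|f-R^\nu_m(f)\|_p \ \le\ 2^{-\alpha m}\sum_{l=1}^{\nu}\binom{\nu}{l}\,2^{-l}(p+1)^{-l/p}\,\beta(\alpha,l,m)\ \le\ 2^{-\alpha m}\,\gamma(\nu,m),
\]
the last step being the harmless addition of the nonnegative $l=0$ term $\beta(0,m)=1$; this is the first inequality of \eqref{ineq[|f - R^nu_m(f)|_p](1)}. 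The case $0<p<1$ runs identically, replacing the triangle inequality by its $p$-analogue exactly as in the proof of Theorem~\ref{theorem[FaberRepresentation]}.

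The two sharper bounds are obtained by estimating $\beta(\alpha,l,m)$ term by term with the auxiliary lemmas. Lemma~\ref{lemma[sum(2)]} with $t=2^{-\alpha}$ and $n=l-1$ gives $\beta(\alpha,l,m)\le\exp(2^\alpha-1)\,(2^\alpha-1)^{-l}\,m^{l-1}$ for $l\ge1$; bounding $m^{l-1}\le m^{\nu-1}$, noting that the $l=0$ term $1$ is dominated by $\exp(2^\alpha-1)m^{\nu-1}$, and resumming via the binomial theorem $\sum_{l=0}^{\nu}\binom{\nu}{l}b^{-l}=(1+1/b)^\nu$ produces the second inequality of \eqref{ineq[|f - R^nu_m(f)|_p](1)}. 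For \eqref{ineq[|f - R^nu_m(f)|_p](2)} one works from the intermediate bound with the sum over $l\ge1$ only: under $m\ge2(\nu-1)$ one has $m\ge2(l-1)$ for every $l\le\nu$, so Lemma~\ref{lemma[sum(1)]} applies and gives $\beta(\alpha,l,m)\le2^{-\alpha}\binom{m}{l-1}\,b_{l-1}(2^{-\alpha})$. Evaluating $b_{l-1}(2^{-\alpha})$ in the three regimes $\alpha>1$, $\alpha=1$, $\alpha<1$ turns this into $\beta(\alpha,l,m)\le|2^\alpha-2|^{-|\sgn(\alpha-1)|}\,\binom{m}{l-1}$ times a factor equal to $1$, $l$, or $(2^\alpha-1)^{-l}$ respectively. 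Since $\nu-1\le m/2$ the binomial coefficients are monotone, so $\binom{m}{l-1}\le\binom{m}{\nu-1}$ for $1\le l\le\nu$; each of the three factors is dominated by its value at $l=\nu$ (using $l\le\nu$, and $(2^\alpha-1)^{-l}\le(2^\alpha-1)^{-\nu}$ when $\alpha<1$ since then $2^\alpha-1<1$); pulling these out and applying $\sum_{l=1}^{\nu}\binom{\nu}{l}2^{-l}(p+1)^{-l/p}\le\bigl(1+\tfrac{1}{2(p+1)^{1/p}}\bigr)^{\nu}$ assembles exactly $a(\nu)\,2^{-\alpha m}\binom{m}{\nu-1}$.

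The main obstacle is this last step: verifying that the case analysis for $b_{l-1}(2^{-\alpha})$ and the resulting $l$-dependent factors collapse cleanly into the closed-form constant $a(\nu)$, while all the validity thresholds --- $m\ge l-1$ so that $\beta(\alpha,l,m)$ is meaningful, $m\ge2(l-1)$ for Lemma~\ref{lemma[sum(1)]}, and $\nu-1\le m/2$ for monotonicity of the binomial coefficients --- follow from the stated hypotheses $m\ge\nu$ and $m\ge2(\nu-1)$. The borderline case $\alpha=1$, where the extra factor is $l$ rather than a constant and is absorbed into the leading factor $\nu$ of $a(\nu)$, and the degenerate term $l=1$ (where $F_{m,0}$ appears), need to be checked by hand; everything else is routine bookkeeping parallel to Theorem~\ref{theorem[|f - R_m(f)|_p,Has]}.
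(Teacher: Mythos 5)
Your proposal is correct and follows essentially the same route as the paper: decompose $f - R^\nu_m(f)$ via the Faber representation over the subsets of the $\nu$ active variables, bound $\|q_k(f)\|_p$ by Theorem~\ref{theorem[FaberRepresentation]}, identify the tail sums with $2^{-\alpha m}\beta(\alpha,l,m)$ through \eqref{eq[BG]}, and then apply Lemmas~\ref{lemma[sum(2)]} and~\ref{lemma[sum(1)]}. The only (immaterial) organizational difference is that the paper first proves the case $\nu=d$ and reduces $\nu<d$ to it by viewing $f$ as a $\nu$-variate function, whereas you sum directly over the subsets $v\subset u$ with $|u|=\nu$; the combinatorics and the resulting constants $\gamma(\nu,m)$, $(1+1/b)^\nu$ and $a(\nu)$ are identical.
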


\begin{proof} 
Let us prove the theorem for $1 \le p \le \infty$. It can be proven in a similar way with a slight modification for 
$0 < p <1$. Put $t=2^{- \alpha}$. We first consider the case $\nu = d$. 
From Theorem \ref{theorem[FaberRepresentation]}, \eqref{eq[BG]} and Lemma \ref{lemma[sum(2)]} it follows that for every  $f \in \Ua$ and and every $m \ge d$, 
\begin{equation} \label{estimation[|f - R_m(f)|_p](3)}
\begin{aligned}
\|f - R^d_m(f)\|_p
  \ &\le \ 
 \sum_{u \subset [d]}  \ \sum_{k \in \ZZdpu: \, |k|_1 > m}\|q_k(f) \|_p \\[1.5ex]
  \ &\le \ 
 \sum_{u \subset [d]} \ \sum_{k \in \ZZdpu: \, |k|_1 > m}
2^{-|u|} \, (p+1)^{- |u|/p} \, 2^{- \alpha|k|_1}  \\[1.5ex]
 \ &= \ 
 \sum_{l=0}^d \binom{d}{l}\, 2^{-l} \, (p+1)^{- l/p} \sum_{k \in \NN^l: \, |k|_1 > m}
  2^{- \alpha|k|_1}  \\[1.5ex]
 \ &= \ 
 \sum_{l=0}^d \binom{d}{l}\, 2^{-l} \, (p+1)^{- l/p} \sum_{j=1}^\infty
 \binom{m+j-1}{l-1}\, 2^{- \alpha (m+j)}  \\[1.5ex]
\ &= \ 
 2^{- \alpha (m+1)}\sum_{l=0}^d \binom{d}{l}\, 2^{-l} \, (p+1)^{- l/p}
F_{m,l-1}(t) \ = \ 2^{- \alpha m} \gamma(\nu,m).
\end{aligned}
\end{equation}
Further, applying Lemma \ref{lemma[sum(2)]} to $F_{m,l-1}(t)$ in \eqref{estimation[|f - R_m(f)|_p](3)} we get
\begin{equation} \nonumber
\begin{aligned}
\|f - R^d_m(f)\|_p
  \ &\le \ 
 2^{- \alpha (m+1)}\sum_{l=0}^d \binom{d}{l}\, 2^{-l} \, (p+1)^{- l/p}
\frac{1}{1-t}\, \biggl(\frac{t}{1-t}\biggl)^{l-1}\, \exp\biggl(\frac{1-t}{t}\biggl)\, m^{l-1} \\[1.5ex]
  \ &\le \ 
 2^{- \alpha (m+1)}\exp\biggl(\frac{1-t}{t}\biggl)\,t^{-1} m^{d-1}\sum_{l=0}^d \binom{d}{l}\, 2^{-l} \, (p+1)^{- l/p}
\, \biggl(\frac{t}{1-t}\biggl)^l \\[1.5ex]
\ &= \ 
 2^{- \alpha m}\exp\biggl(2^\alpha -1\biggl) m^{d-1}\sum_{l=0}^d \binom{d}{l}\, 2^{-l} \, (p+1)^{- l/p}
\, \biggl(2^\alpha -1\biggl)^{-l} \\[1.5ex]
\ &= \ 
\exp(2^\alpha -1) \, (1+1/b)^d \, 2^{- \alpha m}\, m^{d-1}.
\end{aligned}
\end{equation}
The inequality \eqref{ineq[|f - R^nu_m(f)|_p](1)} is proven.

Let us verify \eqref{ineq[|f - R^nu_m(f)|_p](2)}. Applying Lemma \ref{lemma[sum(1)]} to 
$F_{m,l-1}(t)$ in \eqref{estimation[|f - R_m(f)|_p](3)} we have  for $m \ge 2(d-1)$, 
\begin{equation} \nonumber
\begin{aligned}
\|f - R^d_m(f)\|_p
  \ &\le \ 
2^{- \alpha (m+1)}\sum_{l=0}^d \binom{d}{l} \binom{m}{l-1}\, 
2^{-l} \, (p+1)^{- l/p} \, b_{l-1}(2^{-\alpha})  \\[1.5ex]
\ &\le \ 
 2^{- \alpha (m+1)} \binom{m}{d-1}\sum_{l=0}^d \binom{d}{l}\, 
2^{-l} \, (p+1)^{- l/p} \, b_{l-1}(2^{-\alpha})  \\[1.5ex]
\ &= \
a(d)\, 2^{- \alpha m}\, \binom{m}{d-1}.
\end{aligned}
\end{equation}
This completes the proof of the theorem for the case $\nu = d$.
 
We now consider the case $\nu < d$.  Let $f \in \Uan$. Without the generality we can assume that the active variables of $f$ are among $x_1,...,x_\nu$. Hence, we have  
\begin{equation*}  
f(x) 
\ = \
\sum_{u \subset [\nu]} \ \sum_{k \in \ZZdpu} q_k(f)(x), \quad 
R^\nu_m(f) 
 \ = \ 
\sum_{u \subset [\nu]} \ \sum_{k \in \ZZdpu: \ |k|_1 \le m} q_k(f)(x). 
\end{equation*}
Considering $f$ as a function in $U^\alpha_\infty(\TT^\nu)$ and $R^\nu_m$ as an operator in $C(\TT^\nu)$, and applying the proven case $d=\nu$ to $\big\|f - R^\nu_m(f)\big\|_p$, we prove Theorem~\ref{theorem[|f - R^nu_m(f)|_p]}. 
\end{proof}

\subsection{Lower bounds}
\label{Lower bounds}

\begin{theorem} \label{LowerBnd,Has]}
Let  $1\le p \le \infty$ and $0 < \alpha \le 2$. Then we have every $m \ge d$, 
\begin{equation}  \label{LowerBnd,Has(1)]}
\mathring{s}_m(\Uas)_p
\ \ge \  
 2^{-7d}\, \beta(d,m)\, 2^{-\alpha m}
\ \ge \  
 (2^\alpha - 1)^{-1} \, 2^{-7d}\,
2^{- \alpha m} \, \binom{m}{d-1}, 
\end{equation}
and therefore,
\begin{equation}   \label{LowerBnd,Has(1a)]}
\mathring{s}_m(\Uas)_p
\ \ge \ 
 (2^\alpha -1)^{-1}\, 2^{-7d} \, (d-1)^{-(d-1)}\, 2^{- \alpha m} \, m^{d-1}.
\end{equation}
\end{theorem}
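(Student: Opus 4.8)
The plan is to prove all three displayed inequalities at once by exhibiting, for each $m\ge d$, one ``fooling function'' $f_0\in\Uas$ that vanishes at every node of $\mathring{G}(m)$ and satisfies $\|f_0\|_1\ge 2^{-7d}\,\beta(d,m)\,2^{-\alpha m}$. The reduction is immediate: if $f_0$ vanishes on $\mathring{G}(m)$ then $\mathring{S}_m(\mathring{\Phi}_m,f_0)=\sum_{\xi\in\mathring{G}(m)}f_0(\xi)\varphi_\xi=0$ for \emph{every} choice of $\mathring{\Phi}_m$, so $\sup_{f\in\Uas}\|f-\mathring{S}_m(\mathring{\Phi}_m,f)\|_p\ge\|f_0\|_p\ge\|f_0\|_1$ (the last step since $\TTd$ has measure $1$ and $p\ge1$); taking the infimum over $\mathring{\Phi}_m$ gives $\mathring{s}_m(\Uas)_p\ge\|f_0\|_1$.

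For the construction, fix once and for all a nonnegative, sufficiently smooth bump $b$ supported in the open interval $(0,1)$, and for $j\in\NN$ let $\phi_j$ be the $1$-periodic function of period $2^{-j+1}$ obtained by periodizing $b(2^{j-1}\cdot)$; thus $\phi_j$ vanishes at every integer multiple of $2^{-j+1}$, in particular at every dyadic rational of level $<j$, while $\|\phi_j^{(l)}\|_{C(\TT)}\asymp 2^{lj}$ and $\int_0^1\phi_j=\|b\|_1$. (When $\alpha\le1$ one may take $b=M_2$, so that $f_0$ below is a genuine Faber series with coefficients of the extremal size $2^{-\alpha|k|_1}$ permitted by Theorem~\ref{theorem[FaberRepresentation]}.) Put $g_k(x):=\prod_{i=1}^d\phi_{k_i}(x_i)$ for $k\in\NNd$ and set
\[
f_0\ :=\ C^{-d}\sum_{k\in\NNd,\ |k|_1>m}2^{-\alpha|k|_1}\,g_k,
\]
where $C=C(\alpha)$ is fixed below. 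Since $|k|_1>m=|k'|_1$ forces $k_{i_0}>k'_{i_0}$ for some $i_0$, the factor $\phi_{k_{i_0}}$ kills the $i_0$-th coordinate of any node $\xi=2^{-k'}s'\in\mathring{G}(m)$ (a dyadic rational of level $k'_{i_0}<k_{i_0}$), so each $g_k$, hence $f_0$, vanishes on $\mathring{G}(m)$; since $\phi_{k_i}(0)=0$, $f_0$ also vanishes whenever some $x_j=0$, and the series converges uniformly on $\TTd$.

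The heart of the matter is the bound $\|f_0\|_{\Ha}\le1$, i.e. $\bigl\|\sum_{|k|_1>m}2^{-\alpha|k|_1}g_k\bigr\|_{\Ha}\le C^d$ with $C$ independent of $m$ and $d$. Fix $u\subset[d]$, $h\in\IId$ and use in $|\cdot|_{\Hau}$ a difference order $r>\alpha$ (for $\alpha\le2$ one may take $r\le3$; if this $r$ differs from the one defining $\Ha$, a Marchaud-type comparison changes the norm by a harmless further factor $C(\alpha)^d$). Using the triangle inequality in $C(\TTd)$, the tensor identity $\Delta_h^{r,u}g_k=\prod_{i\in u}\Delta_{h_i}^r\phi_{k_i}\cdot\prod_{i\notin u}\phi_{k_i}$, and enlarging the sum by dropping the constraint $|k|_1>m$, one gets (the sum over $\NNd$ of a product of per-coordinate terms factorizes)
\[
\prod_{i\in u}h_i^{-\alpha}\Bigl\|\Delta_h^{r,u}\!\!\sum_{|k|_1>m}\!\!2^{-\alpha|k|_1}g_k\Bigr\|_{C(\TTd)}
\ \le\
\prod_{i\in u}\Bigl(\sum_{k_i\ge1}2^{-\alpha k_i}h_i^{-\alpha}\|\Delta_{h_i}^r\phi_{k_i}\|_{C(\TT)}\Bigr)\prod_{i\notin u}\Bigl(\sum_{k_i\ge1}2^{-\alpha k_i}\|\phi_{k_i}\|_{C(\TT)}\Bigr).
\]
From $\|\Delta_{h_i}^r\phi_{k_i}\|_{C(\TT)}\le C_b\min\bigl((2^{k_i}h_i)^r,1\bigr)$ one obtains $2^{-\alpha k_i}h_i^{-\alpha}\|\Delta_{h_i}^r\phi_{k_i}\|_{C(\TT)}\le C_b\min\bigl((2^{k_i}h_i)^{r-\alpha},(2^{k_i}h_i)^{-\alpha}\bigr)$, whose sum over $k_i\ge1$ is bounded by a finite constant depending only on $\alpha,r$ (uniformly in $h_i\in(0,1)$), since $r-\alpha>0$ and $\alpha>0$; each factor of the second product is $\le\|b\|_{C(\TT)}(2^\alpha-1)^{-1}$. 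Hence the whole expression is $\le C^d$ for a suitable $C=C(\alpha)$, which we now fix; this is precisely the point where the infinite hyperbolic-layer sum is \emph{not} lossy in $m$ — an estimate on the single layer $|k|_1=m+1$ would carry a spurious factor $\binom{m}{d-1}$ — and with this $C$, $f_0\in\Uas$.

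Finally, since all $g_k\ge0$,
\[
\|f_0\|_1=C^{-d}\!\!\sum_{k\in\NNd,\ |k|_1>m}\!\!2^{-\alpha|k|_1}\prod_{i=1}^d\int_0^1\phi_{k_i}
=(\|b\|_1/C)^d\!\!\sum_{k\in\NNd,\ |k|_1>m}\!\!2^{-\alpha|k|_1}
=(\|b\|_1/C)^d\,2^{-\alpha m}\,\beta(d,m),
\]
the last identity from $\sum_{j>m}\binom{j-1}{d-1}2^{-\alpha j}=2^{-\alpha(m+1)}F_{m,d-1}(2^{-\alpha})$ together with \eqref{eq[BG]} (which gives $F_{m,d-1}(2^{-\alpha})=2^\alpha\beta(d,m)$). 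Choosing $b$ so that $\|b\|_1/C\ge2^{-7}$ yields $\mathring{s}_m(\Uas)_p\ge2^{-7d}\beta(d,m)2^{-\alpha m}$, the first inequality of \eqref{LowerBnd,Has(1)]}; the second follows by retaining only the $s=d-1$ term in $\beta(d,m)=(2^\alpha-1)^{-d}\sum_{s=0}^{d-1}\binom{m}{s}(2^\alpha-1)^s$, so $\beta(d,m)\ge(2^\alpha-1)^{-1}\binom{m}{d-1}$; and \eqref{LowerBnd,Has(1a)]} follows from $\binom{m}{d-1}\ge(m/(d-1))^{d-1}$, valid for $m\ge d-1$ because each factor $\tfrac{m-i}{d-1-i}$ of $\binom{m}{d-1}$ is $\ge\tfrac{m}{d-1}$. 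The main obstacle is the norm bound $\|f_0\|_{\Ha}\le1$: the bumps must sit at the right dyadic frequencies \emph{and} be smooth enough for the per-coordinate sums to converge uniformly in $h$ (forcing $r>\alpha$, which is where $\alpha\le2$ enters conveniently), and the constants must be tracked carefully enough to reach the explicit $2^{-7d}$; everything else is routine.
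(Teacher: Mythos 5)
Your overall strategy is the same as the paper's: exhibit a nonnegative fooling function concentrated on the hyperbolic layers $|k|_1>m$ that vanishes on $\mathring{G}(m)$ (hence is annihilated by every $\mathring{S}_m(\mathring{\Phi}_m,\cdot)$) and whose $L_1$-norm is at least $2^{-7d}\,\beta(d,m)\,2^{-\alpha m}$. Your reduction to $\|f_0\|_1$, the vanishing-on-the-grid argument, the evaluation of $\sum_{|k|_1>m}2^{-\alpha|k|_1}=2^{-\alpha m}\beta(d,m)$ via \eqref{eq[BG]}, and the last two inequalities of the theorem all match the paper and are correct. Where you diverge is the construction and the verification that $f_0\in\Uas$: the paper takes the periodized cubic B-spline $M_4$ and uses $M_4^{(2)}=M_2-2M_2(\cdot-1)+M_2(\cdot-2)$ to express $\Delta_h^{2,u}$ of each block through second derivatives controlled by the hat functions with disjoint interior supports, so the sum over all layers is handled by a partition-of-unity bound rather than by summing a geometric series in $k$.

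That difference is exactly where your argument has genuine gaps. First, the endpoint $\alpha=2$ is not covered: with $r=2$ (the order defining $\Ha$ for $\alpha\le 2$) your per-coordinate sum $\sum_{k\ge1}\min\bigl((2^{k}h)^{r-\alpha},(2^{k}h)^{-\alpha}\bigr)$ needs $r>\alpha$ and diverges like $\log(1/h)$ at $\alpha=2$; the proposed Marchaud comparison from third differences back to second differences also loses a factor $\log(1/h)$ precisely when $\omega_3(f,t)\asymp t^2$ (the classical Lipschitz-versus-Zygmund obstruction), so it cannot repair this. Second, the explicit constant $2^{-7d}$ — which is the whole content of the theorem — is asserted rather than obtained: your normalization $C$ must dominate, per coordinate, both $(1-2^{\alpha-2})^{-1}$ (from the differenced coordinates) and $(2^\alpha-1)^{-1}$ (from the coordinates outside $u$, where $\sum_{k_i}2^{-\alpha k_i}\|\phi_{k_i}\|_{C(\TT)}$ appears; already the case $u=\varnothing$, i.e.\ the sup-norm constraint, forces this), while $\|b\|_1\le\|b\|_{C(\TT)}$ for any bump supported in $(0,1)$. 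Hence $\|b\|_1/C\le 2^\alpha-1$, which is below $2^{-7}$ for small $\alpha$ no matter how $b$ is shaped (rescaling $b$ changes numerator and denominator proportionally). So the step ``choose $b$ so that $\|b\|_1/C\ge 2^{-7}$'' cannot be carried out uniformly in $\alpha\in(0,2]$, and your argument only yields a constant $c(\alpha)^d$ with $c(\alpha)\to 0$ at both endpoints, not the stated $2^{-7d}$.
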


\begin{proof}
The inequality \eqref{LowerBnd,Has(1a)]} follows from \eqref{LowerBnd,Has(1)]} and the inequality 
$\binom{m}{d-1} \ge \big(\frac{m}{d-1}\big)^{d-1}$. Let us prove \eqref{LowerBnd,Has(1)]}.
Let  $M_4$ be the cubic cardinal B-spline with support $[0,4]$ and 
knots at the points $0,1,2,3,4$. Since the support of functions $M_4(2^{k+2}\cdot)$ for $k \in \ZZ_+$ is the interval $[0,2^{-k}]$, we can extend these functions to an $1$-periodic function on the whole $\RR$. Denote this periodic extension by $\mu_k$.
Define the univariate nonnegative functions $g_{k,s}$ and $g_k$ on $\TT$ by
\begin{equation} \label{[g_{k,s}]}
g_{k,s}(x):= \ \mu_k(x - 4s), \ k \in \ZZ_+, \ s = 0,...,2^k-1;   
\quad
g_k(x):= \ \sum_{s=0}^{2^k-1} g_{k,s}(x).
\end{equation} 
From the identity $M_4^{(2)}(x) = M_2(x) - 2M_2(x - 1) + M_2(x - 2)$, $x \in \RR$, it follows that 
$
|M_4^{(2)}(x)| \le 2M_2(2^{-1}x), \ x \in \RR,
$
and consequently, 
\begin{equation} \label{[|g_{k,s}^{(2)}|_infty]}
|g_{k,s}^{(2)}(x)| \ \le \ 2^{2k+5}\varphi_{k,s}(x), \ x \in \TT.
\end{equation} 
One can also verify that
\begin{equation} \label{[{supp}g_{k,s}]}
\operatorname{supp} g_{k,s} \ = \ I_{k,s} \ =: [2^{-k}s, 2^{-k}(s+1)], \quad 
\operatorname{int}I_{k,s} \cap \operatorname{int}I_{k,s'} \ = \ \varnothing, \ s \not= s',
\end{equation} 
and by the equation $\int_{0}^1 M_4(x)\, dx = 1$, 
\begin{equation} \label{[int g_{k,s}]}
\int_{0}^1 g_{k,s}(x)\, dx \ = \ 2^{-k-2}.
\end{equation} 
Let 
\begin{equation} \nonumber
g_{k,s}:= \ \prod_{j=1}^d g_{k_j,s_j}, \ k \in \ZZdp, \ s \in I^d(k).
\end{equation} 
Put for $u \subset [d]$,
$
D^2_u
:= \
\frac{\partial^{2|u|} }{\prod_{j \in u}\partial x^2_j}.
$
Then we have for $k \in \ZZdp, \ s \in I^d(k)$ and $u \subset [d]$,
\begin{equation} \label{[D^2_u g]}
|D^2_u g_{k,s}(x)| \ = \ 2^{5|u|} 2^{2|k_u|_1} 
\prod_{j \in u} \varphi_{k_j,s_j}(x_j)\, \prod_{j \not\in u} g_{k_j,s_j}(x_j), \ x \in \TTd.
\end{equation} 
Take $m,n \in \NN$ with $n > m$ and define the function 
\begin{equation} \label{[f_{m,n}]}
f_{m,n}:= \ 2^{-5d} \sum_{l=m}^n 2^{-\alpha l}\sum_{|k|_1 = l}\sum_{s \in I^d(k)} g_{k,s}.
\end{equation}
Observe that
\begin{equation} \nonumber
f_{m,n}= \ 2^{-5d} \sum_{l=m}^n 2^{-\alpha l} \sum_{|k|_1 = l} g_k,
\end{equation} 
where
\begin{equation} \label{[g_k=prod]}
g_k:= \ \prod_{j=1}^d  g_{k_j}, \ k \in \NNd.
\end{equation}
We will show that
\begin{equation} \label{[Delta^{2,u}_h (f_{m,n},x)]}
\Delta^{2,u}_h (f_{m,n},x) \ \le \ 
{\prod_{j \in u} |h_j|^\alpha}, \ x \in \TTd, \ h \in \RRd.
\end{equation}
Let us prove this inequality for $u=[d]$ and $h \in \RRdp$, the general case of $u$ can be proven in a similar way with a slight modification. For $h \in \RR$ and $k \in \NN$, let $h^{(k)} \in [0,2^{-k})$ is the number defined by $h = h^{(k)} + s 2^{-k}$ for some $s \in \ZZ$. For $h \in \RRd$ and $k \in \NNd$, put
 \begin{equation} \nonumber
h^{(k)}:= \ \prod_{j=1}^d h_j^{(k_j)}.
\end{equation} 
Since $g_k$ is  $2^{-k_j}$-periodic in variable $x_j$, we have 
 \begin{equation} \nonumber
\Delta^{2,[d]}_h (g_k,x) \ = \ 
\Delta^{2,[d]}_{h^{(k)}} (g_k,x), \ x \in \TTd, \ h \in \RRd.
\end{equation}
By using the formula 
\[
\Delta^2_h(f,x) \ = \  h^2\int_{\RR} (x+y)[h^{-1}M_2(h^{-1}y)]\, dy
\]
 for twice-differentiable function $f$ on $\RR$ (see, e.g., \cite[p.45]{DL}), and  \eqref{[|g_{k,s}^{(2)}|_infty]} we get
 \begin{equation} \nonumber
\Delta^{2,[d]}_h (g_{k,s},x) \ = \ 
\prod_{j=1}^d h_j^2\int_{\RRd}D^2_{[d]} \, g_{k,s}(y)
\prod_{j=1}^d  h_j^{-1} M_2(h_j^{-1}(y_j-x_j))\, dy,
\end{equation}
and therefore,
\begin{equation} \nonumber
\begin{split}
\Delta^{2,[d]}_h (g_k,x) 
\ &= \
 \sum_{s \in I^d(k)} \Delta^{2,[d]}_{h^{(k)}} (g_{k,s},x) \\[1.5ex]
\ &= \
 \sum_{s \in I^d(k)} \prod_{j=1}^d (h^{(k)}_j)^2\int_{\RRd}D^2_{[d]} \, g_{k,s}(y)
\prod_{j=1}^d  (h^{(k)}_j)^{-1} M_2((h^{(k)}_j)^{-1}(y_j-x_j))\, dy.
\end{split}
\end{equation}
Hence,
\begin{equation} \nonumber
\begin{split}
|\Delta^{2,[d]}_h (g_k,x)| 
\ &\le \
 \sum_{s \in I^d(k)} \int_{\RRd}  5^d 2^{|k|_1} \varphi_{k,s}(y)
\prod_{j=1}^d (h^{(k)}_j)^2 \prod_{j=1}^d  (h^{(k)}_j)^{-1} M_2((h^{(k)}_j)^{-1}(y_j-x_j))\, dy \\[1.5ex]
\ &\le \
 2^{5d} 2^{|k|_1} \int_{\RRd}  \sum_{s \in I^d(k)} \varphi_{k,s}(y)
\prod_{j=1}^d (h^{(k)}_j)^\alpha \prod_{j=1}^d (2^{-k_j}h^{(k)}_j)^{2-\alpha} \prod_{j=1}^d  (h^{(k)}_j)^{-1} M_2((h^{(k)}_j)^{-1}(y_j-x_j))\, dy \\[1.5ex]
\ &\le \
 2^{5d} 2^{|k|_1} \prod_{j=1}^d h_j^\alpha \int_{\RRd}  \sum_{s \in I^d(k)} \varphi_{k,s}(y)
 \prod_{j=1}^d  (h^{(k)}_j)^{-1} M_2((h^{(k)}_j)^{-1}(y_j-x_j))\, dy.
\end{split}
\end{equation}
By the inequalities \eqref{[D^2_u g]} and 
 \begin{equation} \nonumber
 \sum_{l=m}^n \ \sum_{|k|_1 = l} \ \sum_{s \in I^d(k)} \varphi_{k,s}(y)
\ \le \ 1, \ y  \in \RRd,
\end{equation}
we derive that
\begin{equation} \nonumber
\begin{split}
|\Delta^{2,[d]}_h (f_{m,n},x)| 
\ &\le \
2^{-5d} \sum_{l=m}^n 2^{-\alpha l} \sum_{|k|_1 = l} |\Delta^{2,[d]}_h (g_k,x)| \\[1.5ex]
\ &\le \
2^{-5d} \sum_{l=m}^n \sum_{|k|_1 = l} 2^{-\alpha l} 
 2^{5d} 2^{|k|_1} \prod_{j=1}^d h_j^\alpha \int_{\RRd}  \sum_{s \in I^d(k)} \varphi_{k,s}(y)
 \prod_{j=1}^d  (h^{(k)}_j)^{-1} M_2((h^{(k)}_j)^{-1}(y_j-x_j))\, dy \\[1.5ex]
\ &\le \
\prod_{j=1}^d h_j^\alpha  \int_{\RRd}  \sum_{l=m}^n \sum_{|k|_1 = l}\sum_{s \in I^d(k)} \varphi_{k,s}(y)
 \prod_{j=1}^d  (h^{(k)}_j)^{-1} M_2((h^{(k)}_j)^{-1}(y_j-x_j))\, dy \\[1.5ex]
\ &\le \
\prod_{j=1}^d h_j^\alpha  \int_{\RRd}\,
\prod_{j=1}^d  (h^{(k)}_j)^{-1} M_2((h^{(k)}_j)^{-1}(y_j-x_j))\, dy \\[1.5ex]
\ &= \
\prod_{j=1}^d h_j^\alpha   
\int_{\RRd}\,\prod_{j=1}^d M_2(y_j)\, dy 
\ \le \
\prod_{j=1}^d h_j^\alpha. 
\end{split}
\end{equation}
The inequality \eqref{[Delta^{2,u}_h (f_{m,n},x)]} is proven.
This means that
$
f_{m,n} \in \Uas.
$
From \eqref{[g_k=prod]}, \eqref{[{supp}g_{k,s}]}, 
\eqref{[int g_{k,s}]} we have that if $m$ is given then for arbitrary $n \ge m$, 
\begin{equation} \label{[f_{m,n}>]}
\|f_{m,n}\|_p
\ \ge \
\|f_{m,n}\|_1
 \ = \ 
\ 2^{-5d} \sum_{l=m}^n 2^{-\alpha l} \sum_{|k|_1 = l}  \|g_k\|_1
 \ = \ 
\ 2^{-7d} \sum_{l=m}^n 2^{-\alpha l} \binom{l-1}{d-1}.
\end{equation}
On the other hand, one can verify that
$
f_{m,n} (\xi) \ = 0,  \ \xi \in \mathring{G}(m)
$ which yields that $\mathring{S}_m(\mathring{\Phi}_m,f_{m,n}) = 0$ for arbitrary $\mathring{\Phi}_m$, and consequently, by 
\eqref{[f_{m,n}>]} for arbitrary $n \ge m$,
\begin{equation} \nonumber  
\mathring{s}_m(\Uas)_p
\ \ge \ 
\|f_{m,n} - \mathring{S}_m(\Phi,f_{m,n})\|_p
\ = \ 
 \|f_{m,n}\|_p
\ \ge \ 
2^{-7d} \sum_{l=m}^n 2^{-\alpha l} \binom{l-1}{d-1}
\end{equation}
This means that for $t = 2^{-\alpha}$,
\begin{equation}  \nonumber
\mathring{s}_m(\Uas)_p
\ \ge \  
2^{-7d} \sum_{l=m}^\infty 2^{-\alpha l} \binom{l-1}{d-1}
\ = \
2^{-7d}\, 2^{-\alpha m}\, F_{m-1,d-1}(t).
\end{equation}
Therefore, applying \eqref{[<F_{m,n}<]} gives
\begin{equation}  \nonumber
\begin{split}
\mathring{s}_m(\Uas)_p 
\ &> \
2^{-7d}\, 2^{-\alpha m}\, t F_{m,d-1}(t) 
\ = \ 
2^{-7d}\, 2^{-\alpha m}\, \beta(\alpha,d,m) \\
\ &> \
2^{-7d}\,2^{-\alpha m}\,(2^\alpha - 1)^{-d}\, \binom{m}{d-1}\, (2^\alpha - 1)^{d-1} \\
\ &= \ 
(2^\alpha - 1)^{-1} \, 2^{-7d}\, 2^{- \alpha m} \, \binom{m}{d-1}
\end{split}
\end{equation}
which proves \eqref{LowerBnd,Has(1)]}. 
\end{proof}

\begin{corollary} \label{corollary[|f - R_m(f)|_p,Has]}
Let  $0 < p \le \infty$ and $0 < \alpha \le 2$. Then we have for every
$m \ge d$, 
\begin{equation}  \nonumber
 2^{-7d}\,  \beta(d,m)\, 2^{- \alpha m}
 \ \le \
\mathring{s}_m(\Uas)_p
\ \le \  
 [2(p+1)^{1/p}]^{-d}\, \beta(d,m)\, 2^{- \alpha m}. 
\end{equation}
Moreover, if in addition, $m \ge 2(d-1)$,
\begin{equation}  \nonumber
(2^\alpha - 1)^{-1}\, 2^{-7d} \, 2^{- \alpha m} \, \binom{m}{d-1}
\ \le \
\mathring{s}_m(\Uas)_p
\ \le \ 
 a^\circ(d)\, 2^{- \alpha m} \, \binom{m}{d-1}. 
\end{equation}
\end{corollary}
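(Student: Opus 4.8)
The plan is to read both two‑sided estimates directly off the results already proved in this section, the only genuine work being a short supplement for the quasi‑norm range $0<p<1$.

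For the upper bounds, the point is that $\mathring{R}_m$ is itself one of the admissible sampling algorithms in the definition of $\mathring{s}_m(\Uas)_p$: by construction $\mathring{R}_m(f)=\mathring{S}_m(\mathring{\Psi}_m,f)=\sum_{\xi\in\mathring{G}(m)}f(\xi)\psi_\xi$, so choosing $\mathring{\Phi}_m=\mathring{\Psi}_m$ gives $\mathring{s}_m(\Uas)_p\le\sup_{f\in\Uas}\|f-\mathring{R}_m(f)\|_p$. The right‑hand side of the first display is then exactly the bound $[2(p+1)^{1/p}]^{-d}\,\beta(d,m)\,2^{-\alpha m}$ of \eqref{ineq[|f - R_m(f)|_p,Has(1)]} in Theorem~\ref{theorem[|f - R_m(f)|_p,Has]}, which is stated for all $0<p\le\infty$ and $m\ge d$; and, when in addition $m\ge 2(d-1)$, the right‑hand side of the second display is the refined bound $a^\circ(d)\,2^{-\alpha m}\binom{m}{d-1}$ of \eqref{ineq[|f - R_m(f)|_p,Has(2)]} (this is where Lemma~\ref{lemma[sum(1)]}, hence the hypothesis $m\ge 2(d-1)$, enters).

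For the lower bounds in the range $1\le p\le\infty$ there is nothing new: the first inequality of \eqref{LowerBnd,Has(1)]} in Theorem~\ref{LowerBnd,Has]} is precisely $\mathring{s}_m(\Uas)_p\ge 2^{-7d}\beta(d,m)\,2^{-\alpha m}$, and its second inequality, obtained by keeping only the $s=d-1$ term in the defining sum for $\beta(d,m)$, is precisely $\mathring{s}_m(\Uas)_p\ge(2^\alpha-1)^{-1}2^{-7d}2^{-\alpha m}\binom{m}{d-1}$. Pairing these with the upper bounds above yields both displays for $1\le p\le\infty$, the second display being the weaker one (it is a consequence of the first via these elementary estimates on $\beta(d,m)$ and on $F_{m-1,d-1}$) and requiring the extra hypothesis $m\ge 2(d-1)$ only because of the upper bound.

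The one step that needs care, and the place where I would add a short argument, is the lower bound for $0<p<1$: the proof of Theorem~\ref{LowerBnd,Has]} exhibited $f_{m,n}\in\Uas$ vanishing on $\mathring{G}(m)$, so that $\mathring{s}_m(\Uas)_p\ge\|f_{m,n}\|_p$ for every $n\ge m$, but it then used $\|f_{m,n}\|_p\ge\|f_{m,n}\|_1$, which fails for $p<1$. To recover a lower bound on $\|f_{m,n}\|_p$ I would invoke the reverse‑Hölder inequality $\|f_{m,n}\|_p\ge\|f_{m,n}\|_{C(\TTd)}^{\,1-1/p}\,\|f_{m,n}\|_1^{1/p}$ together with a comparison $\|f_{m,n}\|_{C(\TTd)}\le A_d\,\|f_{m,n}\|_1$ whose constant $A_d$ depends only on $d$: since $g_k=\prod_j g_{k_j}$ with $0\le g_{k_j}\le\|M_4\|_{C(\TT)}$ and $\int_{\TT}g_{k_j}=1/4$, at each level $l$ one has $\big\|\sum_{|k|_1=l}g_k\big\|_{C(\TTd)}\le\|M_4\|_{C(\TT)}^{\,d}\binom{l-1}{d-1}$ while $\big\|\sum_{|k|_1=l}g_k\big\|_1=4^{-d}\binom{l-1}{d-1}$, so $A_d\le(4\|M_4\|_{C(\TT)})^d$, independent of $m$ and $n$. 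Feeding this into the reverse‑Hölder estimate gives $\mathring{s}_m(\Uas)_p\ge A_d^{-(1/p-1)}\|f_{m,n}\|_1$, and then proceeding verbatim as in the proof of Theorem~\ref{LowerBnd,Has]} — letting $n\to\infty$ and applying \eqref{[<F_{m,n}<]} — yields the asserted lower bounds with $2^{-7d}$ replaced by $A_d^{-(1/p-1)}2^{-7d}$, a quantity still of the form $C(p)^{-d}$, so the exponential‑in‑$d$ shape and the growth in $m$ are unchanged. I do not anticipate any real obstacle here: everything is bookkeeping on top of the two theorems, and the reverse‑Hölder device is routine once the dimension‑only $L_\infty$–$L_1$ comparison for $f_{m,n}$ is in hand.
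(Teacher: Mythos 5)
Your proposal matches what the paper intends: the corollary carries no proof of its own and is simply the conjunction of Theorem~\ref{theorem[|f - R_m(f)|_p,Has]} (upper bounds, valid for all $0<p\le\infty$) and Theorem~\ref{LowerBnd,Has]} (lower bounds), with the second display's lower bound read off from the first by keeping the $s=d-1$ term of $\beta(d,m)$, exactly as you say. Your reading of which hypotheses are needed where ($m\ge 2(d-1)$ only for the upper bound via Lemma~\ref{lemma[sum(1)]}) is also correct.

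The one substantive point is the range $0<p<1$ in the lower bounds, and here you have in fact spotted a gap in the paper rather than created one: Theorem~\ref{LowerBnd,Has]} is stated only for $1\le p\le\infty$, its proof uses $\|f_{m,n}\|_p\ge\|f_{m,n}\|_1$ (false for $p<1$ on a probability space), and since $\mathring{s}_m(\cdot)_p$ is nondecreasing in $p$ the case $p<1$ does not follow by monotonicity. Your reverse--H\"older patch is sound: the $L_\infty$--$L_1$ comparison $\|f_{m,n}\|_{C(\TTd)}\le (4\|M_4\|_{C(\TT)})^d\|f_{m,n}\|_1$ holds level by level because the $g_{k,s}$ at a fixed $k$ have essentially disjoint supports and $\int_0^1 g_{k,s}=2^{-k-2}$, and it yields $\mathring{s}_m(\Uas)_p\ge A_d^{-(1/p-1)}2^{-7d}\beta(d,m)2^{-\alpha m}$. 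Be aware, however, that this does \emph{not} recover the corollary verbatim: the printed constant $2^{-7d}$ is not obtained for $p<1$, only a $p$-dependent constant still of the form $C(p)^{-d}$. So either the corollary's hypothesis should be read as $1\le p\le\infty$ (consistent with Theorem~\ref{LowerBnd,Has]} and with Corollary~\ref{corollary[|f - R^nu_m(f)|_p]}), or the lower-bound constant must be weakened as in your argument; you should state explicitly which of the two you are claiming.
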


Put
\[
\gamma'(\nu,m)
\ = \
\gamma'(\alpha,\nu,m)
:= \
 \sum_{l=0}^\nu \binom{\nu}{l}\, 2^{-7l} \,\beta(\alpha,l,m). 
\]

\begin{theorem} \label{LowerBnd,Ha]}
Let  $1\le p \le \infty$, $0 < \alpha \le 2$ and $1 \le \nu \le d$. Then we have every $m \ge \nu$, 
\begin{equation}  \label{LowerBnd,Ha(1)]}
s^\nu_m(\Uan)_p
 \ge   
\gamma'(\nu,m) \, 2^{-\alpha m}
 >  
[2^7(2^\alpha - 1)]^{-1}\, \biggl(2^\alpha - \frac{127}{128}\biggl)^{\nu-1} \, 2^{-\alpha m}\, \binom{m}{\nu-1}. 
\end{equation}
\end{theorem}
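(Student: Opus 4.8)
The plan is to mirror the structure of the proof of Theorem~\ref{LowerBnd,Has]}, but to exploit the fact that functions in $\Uan$ need only have $\nu$ active variables, so the lower-bound construction should live on a $\nu$-dimensional "slab" of variables while being constant in the remaining $d-\nu$ ones. First I would fix a $\nu$-element subset, say $u=[\nu]\subset[d]$, and for each $v\subset u$ build a test function that is a sum of scaled and shifted cubic B-splines $g_{k,s}$ in the variables indexed by $v$, constant in all other variables; summing over $v\subset u$ and over frequency levels $|k|_1=l$ for $m\le l\le n$, exactly as in \eqref{[f_{m,n}]}, produces a function $f_{m,n}^\nu\in\Uan$ by the same difference-operator estimate \eqref{[Delta^{2,u}_h (f_{m,n},x)]} (applied now on $\TT^\nu$ and lifted trivially). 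The coefficient bookkeeping changes: instead of the single term $2^{-\alpha l}\binom{l-1}{d-1}$ in \eqref{[f_{m,n}>]} one gets, after the $L_1$ lower bound, a sum over $v\subset u$ of terms $2^{-7|v|}2^{-\alpha l}\binom{l-1}{|v|-1}$, which is precisely what is packaged in $\gamma'(\nu,m)$ once one uses $F_{m-1,n}(t)>tF_{m,n}(t)$ and the definition of $\beta(\alpha,l,m)$.

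Next I would check the crucial vanishing property: the analog of $f_{m,n}(\xi)=0$ for $\xi\in\mathring{G}(m)$ must become $f_{m,n}^\nu(\xi)=0$ for all $\xi\in G^\nu(m)$. Here one uses that each $g_{k,s}$ in a variable $x_j$ with $k_j\ge 1$ is a cardinal B-spline on the grid $2^{-k_j}\ZZ$ whose support interior contains no point of that grid, so it vanishes at all dyadic points $2^{-k_j}s_j$ of resolution $\le k_j$; since every $\xi\in G^\nu(m)$ is of the form $2^{-k}s$ with $|k|_1=m$ and the construction only uses levels $l\ge m$, at least one coordinate of $\xi$ hits a zero of the corresponding factor, killing the whole product. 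This forces $S^\nu_m(\Phi^\nu_m,f_{m,n}^\nu)=0$ for every choice of $\Phi^\nu_m$, hence $s^\nu_m(\Uan)_p\ge\|f_{m,n}^\nu\|_p\ge\|f_{m,n}^\nu\|_1$, and letting $n\to\infty$ gives the first inequality $s^\nu_m(\Uan)_p\ge\gamma'(\nu,m)2^{-\alpha m}$.

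For the second, more explicit inequality I would estimate $\gamma'(\nu,m)2^{-\alpha m}$ from below by keeping a single favourable term or, better, by factoring. Writing $t=2^{-\alpha}$ and using $\beta(\alpha,l,m)\ge(2^\alpha-1)^{-l}\binom{m}{l-1}(2^\alpha-1)^{l-1}$ from the $s=l-1$ summand (the bound already used at the end of the proof of Theorem~\ref{LowerBnd,Has]}), one gets
\begin{equation} \nonumber
\gamma'(\nu,m)\ \ge\ (2^\alpha-1)^{-1}\sum_{l=1}^\nu\binom{\nu}{l}2^{-7l}(2^\alpha-1)^{l-1}\binom{m}{l-1},
\end{equation}
and then bounding $\binom{m}{l-1}\ge\binom{m}{\nu-1}$ is false in general, so instead I would use $\binom{m}{l-1}\ge$ a suitable single term; the cleanest route is to drop all but the identity-like structure and recognize $\sum_{l=1}^\nu\binom{\nu}{l}2^{-7l}(2^\alpha-1)^{l-1}x^{l-1}$ as essentially $x^{-1}[(1+2^{-7}(2^\alpha-1)x)^\nu-1]\ge 2^{-7}\nu(1+2^{-7}(2^\alpha-1)x)^{\nu-1}$ evaluated at a point that produces the factor $(2^\alpha-\tfrac{127}{128})^{\nu-1}$, namely via $1+2^{-7}(2^\alpha-1)=2^{-7}(2^\alpha-1+128)=2^{-7}(2^\alpha+127)$ — I would reconcile the exact constant $\tfrac{127}{128}$ by carrying the $\binom{m}{\nu-1}$ term through the binomial-type identity $\sum_l\binom{\nu}{l}a^l\binom{m}{l-1}$ rather than over-simplifying. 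The main obstacle I anticipate is exactly this last combinatorial estimate: getting the stated clean constant $[2^7(2^\alpha-1)]^{-1}(2^\alpha-\tfrac{127}{128})^{\nu-1}$ requires a careful partial summation (e.g. Vandermonde-type manipulation of $\sum_l\binom{\nu}{l}\binom{m}{l-1}(2^\alpha-1)^{l-1}2^{-7l}$ to peel off $\binom{m}{\nu-1}$ times a pure power), and I would need to be attentive that the inequality direction is preserved when discarding lower-order binomial terms.
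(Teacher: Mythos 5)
Your construction and your derivation of the first inequality track the paper's proof essentially exactly: the paper also sets $\phi^{[\nu]}_{m,n}=\sum_{u\subset[\nu]}f^u_{m,n}$ with $f^u_{m,n}$ built from the quartic B-splines $g_{k,s}$ in the variables indexed by $u$, checks $\phi^{[\nu]}_{m,n}\in\Uan$ and the vanishing on $G^\nu(m)$ by the same support/dyadic-resolution argument you sketch, computes $\|\phi^{[\nu]}_{m,n}\|_1$, lets $n\to\infty$, and converts $\sum_{s=1}^{\nu}\binom{\nu}{s}2^{-7s}F_{m-1,s-1}(2^{-\alpha})$ into $\gamma'(\nu,m)$ via \eqref{[<F_{m,n}<]} and \eqref{eq[BG]}. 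Two small points: keeping only the $s=l-1$ summand of $\beta(\alpha,l,m)$ gives $(2^\alpha-1)^{-1}\binom{m}{l-1}$, so the extra factor $(2^\alpha-1)^{l-1}$ in your displayed sum should not be there; and the construction only produces $\sum_{l=1}^{\nu}\binom{\nu}{l}2^{-7l}\beta(\alpha,l,m)$, i.e.\ $\gamma'(\nu,m)-1$, since the $v=\varnothing$ piece cannot be included without destroying the vanishing on the grid.

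The genuine gap is the second inequality, which you explicitly leave unfinished ("requires a careful partial summation \dots I would need to be attentive that the inequality direction is preserved"). Your caution is warranted: no such summation exists, because the inequality $\gamma'(\nu,m)>[2^7(2^\alpha-1)]^{-1}\bigl(2^\alpha-\tfrac{127}{128}\bigr)^{\nu-1}\binom{m}{\nu-1}$ is false for large $m$. Take $\nu=2$ and write $c=2^\alpha-1$, $q=2^{-7}$: then $\gamma'(2,m)=1+2q/c+q^2/c^2+q^2m/c$, while the right-hand side equals $qc^{-1}(c+q)m=qm+q^2m/c$, so the difference is $1+2q/c+q^2/c^2-qm<0$ once $m$ exceeds roughly $128$. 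The paper's own derivation of this step rests on an invalid interchange of the order of summation: $\sum_{s=0}^{d-1}\sum_{l=0}^{s}a_{s,l}$ is rewritten as $\sum_{l=0}^{d-1}\sum_{s=0}^{l}a_{s,l}$ instead of $\sum_{l=0}^{d-1}\sum_{s=l}^{d-1}a_{s,l}$. With the correct interchange, retaining only $l=\nu-1$ (whose inner sum is then the single term $s=\nu-1$) gives the weaker but true bound $s^\nu_m(\Uan)_p\ge 2^{-7\nu}(2^\alpha-1)^{-1}2^{-\alpha m}\binom{m}{\nu-1}$, i.e.\ the factor $\bigl(2^\alpha-\tfrac{127}{128}\bigr)^{\nu-1}$ must be replaced by $2^{-7(\nu-1)}$. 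So your approach is the right one and coincides with the paper's, but any completion of the argument has to target this corrected constant rather than the one stated.
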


\begin{proof}
We first consider the case $\nu = d$. We take the univariate functions $g_{k,s}$, $g_k$ as in \eqref{[g_{k,s}]}, and for every $u \subset [d]$, define the functions
\begin{equation} \nonumber
g_{k,s}^u:= \ \prod_{j \in u} g_{k_j,s_j}, \ k \in \ZZdp, \ s \in I^d(k),
\quad
g_k^u:= \ \prod_{j \in u} g_{k_j}.
\end{equation} 
If $m,n \in \NN$ with $n > m$, and $u \subset [d]$, we define the $|u|$-variate function 
\begin{equation} \nonumber
f_{m,n}^u:= \ 2^{-5|u|} \sum_{l=m}^n 2^{-\alpha l}
\sum_{|k|_1 = l}\, \sum_{s \in I^d(k)} g_{k,s}^u,
\end{equation}
and the $d$-variate function
\begin{equation} \nonumber
\phi_{m,n}:= \sum_{u \subset [d]} f_{m,n}^u.
\end{equation}
From the proof of Theorem \ref{LowerBnd,Has]} we can see that
\begin{equation} \label{[phi-in-Ua]}
\phi_{m,n} \in \Ua,
\quad
\phi_{m,n} (\xi) \ = 0,  \ \xi \in G^\nu(m).
\end{equation}

Observe that
\begin{equation} \nonumber
\phi_{m,n}:= \ \sum_{u \subset [d]} 2^{-5|u|} \sum_{l=m}^n 2^{-\alpha l} \sum_{|k|_1 = l} g_k^u
\end{equation}

 From \eqref{[g_k=prod]}, \eqref{[{supp}g_{k,s}]}, 
\eqref{[int g_{k,s}]} we can also verify that if $m$ is given then for arbitrary $n > m$, 
\begin{equation} \nonumber
\begin{split}
\|\phi_{m,n}\|_p
\ &\ge \
\|\phi_{m,n}\|_1 
 \ = \ 
\ \sum_{u \subset [d]} 2^{-5|u|} \sum_{l=m}^n 2^{-\alpha l} \sum_{|k|_1 = l}  \|g_k^u\|_1 \\[1.5ex]
 \ &= \ 
\ \sum_{u \subset [d]} 2^{-7|u|}  \sum_{l=m}^n 2^{-\alpha l} \binom{l-1}{|u|-1}
 \ = \ 
\ \sum_{s=1} \binom{d}{s} 2^{-7s}  \sum_{l=m}^n 2^{-\alpha l} \binom{l-1}{s-1}\\[1.5ex]
\end{split}
\end{equation}
By \eqref{[phi-in-Ua]} we have $S^d_m(\Phi^d_m,\phi_{m,n}) = 0$ for arbitrary $\Phi^d_m$, and consequently, by 
\eqref{[f_{m,n}>]} for arbitrary $n > m$,
\begin{equation}  \label{[s_m(Ua)_p>](1)}
s^d_m(\Ua)_p
\ \ge \  
 \|\phi_{m,n}\|_p
\ \ge \ 
\sum_{s=1}^d \binom{d}{s} 2^{-7s}  \sum_{l=m}^n 2^{-\alpha l} \binom{l-1}{s-1}.
\end{equation}
This means that for $t = 2^{-\alpha}$,
\begin{equation}  \label{[s_m(Ua)_p>](2)}
s^d_m(\Ua)_p
\ \ge \  
\sum_{s=1}^d \binom{d}{s} 2^{-7s}  \sum_{l=m}^\infty 2^{-\alpha l} \binom{l-1}{s-1}
\ = \
\sum_{s=1}^d \binom{d}{s} 2^{-7s} \, F_{m-1,s-1}(t).
\end{equation}
Therefore, applying \eqref{[<F_{m,n}<]} gives
\begin{equation}  \label{LowerBnd,Ha(3)]}
\begin{split}
s^d_m(\Ua)_p 
\ &> \
\sum_{s=1}^d \binom{d}{s} 2^{-7s} \, 2^{-\alpha m}\, t F_{m,s-1}(t) \\
\ &= \ 
2^{-\alpha m}\, \gamma'(\alpha,d,m) \\
\ &> \
2^{-\alpha m}\, \sum_{s=1}^d \binom{d}{s} 2^{-7s}\,(2^\alpha - 1)^{-s}\, 
\sum_{l=0}^{s-1} \binom{m}{l}\, (2^\alpha - 1)^l \\
\ &= \ 
2^{-\alpha m}\, \sum_{s=0}^{d-1} \binom{d}{s+1} 2^{-7(s+1)}\,(2^\alpha - 1)^{-(s+1)}\, 
\sum_{l=0}^s \binom{m}{l}\, (2^\alpha - 1)^l \\
\ &= \ 
2^{-\alpha m}\, \sum_{l=0}^{d-1} \binom{m}{l}\, (2^\alpha - 1)^l \,
\sum_{s=0}^l \binom{d}{s+1} 2^{-7(s+1)}\,(2^\alpha - 1)^{-(s+1)} \\
\ &> \
2^{-\alpha m}\,  \binom{m}{d-1}\, (2^\alpha - 1)^{d-1} \,
\sum_{s=0}^l \binom{d-1}{s} 2^{-7(s+1)}\,(2^\alpha - 1)^{-(s+1)} \\
\ &= \ 
[2^7(2^\alpha - 1)]^{-1}\, 2^{-\alpha m}\,  \binom{m}{d-1}\, (2^\alpha - 1)^{d-1} \,
\sum_{s=0}^{d-1} \binom{d-1}{s} [2^7(2^\alpha - 1)]^{-s} \\
\ &= \ 
[2^7(2^\alpha - 1)]^{-1}\, (2^\alpha - 1)^{d-1} \,
[1 + 2^7(2^\alpha - 1)^{-1}]^{d-1} \, 2^{-\alpha m}\, \binom{m}{d-1}\\
\ &= \ 
[2^7(2^\alpha - 1)]^{-1}\, \biggl(2^\alpha - \frac{127}{128}\biggl)^{d-1} \,
 2^{-\alpha m}\, \binom{m}{d-1}\\
\end{split}
\end{equation}
which proves \eqref{LowerBnd,Ha(1)]} for the case $\nu = d$.

To process the case $\nu < d$ we take the function $\phi^{[\nu]}_{m,n} \in \Uan$ by
\begin{equation} \nonumber
\phi^{[\nu]}_{m,n}:= \ \sum_{u \subset [\nu]} f_{m,n}^u.
\end{equation}
Similarly to \eqref{[s_m(Ua)_p>](1)} we have
\begin{equation}  \nonumber
s^\nu_m(\Uan)_p
\ \ge \  
 \|\phi^{[\nu]}_{m,n}\|_1
\ = \ 
\sum_{s=1}^\nu \binom{\nu}{s} 2^{-7s}  \sum_{l=m}^n 2^{-\alpha l} \binom{l-1}{s-1}.
\end{equation} 
Hence, by replacing $d$ by $\nu$ in the estimations  \eqref{[s_m(Ua)_p>](2)} and 
\eqref{LowerBnd,Ha(3)]} we prove the case $\nu < d$.
\end{proof}

\begin{corollary} \label{corollary[|f - R^nu_m(f)|_p]}
Let $0 < p \le \infty$, $0 < \alpha \le 2$ and $1\le \nu \le d$. Then we have every $m \ge \nu$, 
\begin{equation}  \nonumber
 2^{- \alpha m} \sum_{l=0}^\nu \binom{\nu}{l}\, 2^{-7l}  \beta(l,m)
\le
s^\nu_m(\Uan)_p
\le
 2^{- \alpha m} \sum_{l=0}^\nu \binom{\nu}{l}\, [2(p+1)^{l/p}]^{-l} \beta(l,m).
\end{equation}
Moreover,  if in addition, $m \ge 2(\nu-1)$,
\begin{equation}  \nonumber
2^7(2^\alpha - 1)]^{-1}\, \biggl(2^\alpha - \frac{127}{128}\biggl)^{\nu-1} \, 2^{-\alpha m}\, \binom{m}{\nu-1}
\le 
s^\nu_m(\Uan)_p
\le  
a(\nu)\,  2^{- \alpha m}\, \binom{m}{\nu-1}.
\end{equation}
\end{corollary}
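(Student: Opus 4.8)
The plan is to assemble Corollary~\ref{corollary[|f - R^nu_m(f)|_p]} simply by juxtaposing the upper and lower bounds already proven, so there is essentially nothing new to prove beyond bookkeeping. For the first pair of inequalities (valid for $m \ge \nu$), the upper bound is the first line of \eqref{ineq[|f - R^nu_m(f)|_p](1)} in Theorem~\ref{theorem[|f - R^nu_m(f)|_p]}, namely $s^\nu_m(\Uan)_p \le \gamma(\nu,m)\, 2^{-\alpha m}$, and I would just unfold the definition of $\gamma(\nu,m) = \sum_{l=0}^\nu \binom{\nu}{l} 2^{-l}(p+1)^{-l/p}\beta(l,m)$; note that $2^{-l}(p+1)^{-l/p} = [2(p+1)^{1/p}]^{-l}$, which matches the form written in the corollary (modulo the apparent typo $[2(p+1)^{l/p}]^{-l}$ in the statement). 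The lower bound is the first inequality in \eqref{LowerBnd,Ha(1)]} of Theorem~\ref{LowerBnd,Ha]}, i.e. $s^\nu_m(\Uan)_p \ge \gamma'(\nu,m)\,2^{-\alpha m}$ with $\gamma'(\nu,m) = \sum_{l=0}^\nu \binom{\nu}{l} 2^{-7l}\beta(l,m)$; this is literally the expression on the left of the first displayed line of the corollary. So the first pair follows by citing those two theorems directly.

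For the second pair of inequalities (valid under the extra assumption $m \ge 2(\nu-1)$, which is needed so that Lemma~\ref{lemma[sum(1)]} applies with $m \ge 2n$, $n = \nu-1$), the upper bound is exactly \eqref{ineq[|f - R^nu_m(f)|_p](2)} in Theorem~\ref{theorem[|f - R^nu_m(f)|_p]}, namely $s^\nu_m(\Uan)_p \le a(\nu)\,2^{-\alpha m}\binom{m}{\nu-1}$. The lower bound is the second (strict) inequality in \eqref{LowerBnd,Ha(1)]} of Theorem~\ref{LowerBnd,Ha]}, which gives $s^\nu_m(\Uan)_p > [2^7(2^\alpha-1)]^{-1}\big(2^\alpha - \tfrac{127}{128}\big)^{\nu-1} 2^{-\alpha m}\binom{m}{\nu-1}$; weakening $>$ to $\ge$ yields the stated bound (again modulo the obvious missing opening bracket typo $2^7(2^\alpha-1)]^{-1}$ in the corollary, which should read $[2^7(2^\alpha-1)]^{-1}$). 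Thus both pairs are immediate consequences of the two preceding theorems.

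Concretely, I would write the proof as a single short paragraph: for the first display, invoke the first line of \eqref{ineq[|f - R^nu_m(f)|_p](1)} for the right-hand inequality and \eqref{LowerBnd,Ha(1)]} for the left-hand inequality, then substitute the definitions of $\gamma(\nu,m)$ and $\gamma'(\nu,m)$; for the second display, invoke \eqref{ineq[|f - R^nu_m(f)|_p](2)} for the right-hand inequality and the second estimate in \eqref{LowerBnd,Ha(1)]} for the left-hand inequality, noting that the hypothesis $m \ge 2(\nu-1)$ is precisely what Theorem~\ref{theorem[|f - R^nu_m(f)|_p]} requires for \eqref{ineq[|f - R^nu_m(f)|_p](2)}. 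There is no real obstacle here; the only thing to be a little careful about is consistency of notation between $\gamma$, $\gamma'$, $\beta$, and $a(\nu)$ as defined above, and flagging (or silently correcting) the two bracket/exponent typos in the displayed statement of the corollary so that the cited bounds match verbatim. If one wanted to be fully self-contained one could also remark that the case $0 < p < 1$ of the upper bounds was already handled in Theorems~\ref{theorem[|f - R^nu_m(f)|_p]} and the restriction to $1 \le p \le \infty$ in Theorem~\ref{LowerBnd,Ha]} is what forces the corollary's lower bounds to carry that same restriction implicitly (the corollary states $0 < p \le \infty$, but the lower bound genuinely uses only the $L_1$ norm, so it extends to all $p$ via $\|\cdot\|_p \ge \|\cdot\|_1$ on $\TTd$).
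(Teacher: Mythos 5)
Your proposal is correct and coincides with what the paper does: the corollary carries no separate proof and is exactly the juxtaposition of the upper bounds in Theorem~\ref{theorem[|f - R^nu_m(f)|_p]} and the lower bounds in Theorem~\ref{LowerBnd,Ha]}, after unfolding $\gamma(\nu,m)$ and $\gamma'(\nu,m)$ and modulo the two typographical slips you correctly identify. One caveat on your final parenthetical remark: on $\TTd$ (a probability space) the $L_p$ quasi-norms are \emph{increasing} in $p$, so for $0<p<1$ one has $\|\cdot\|_p\le\|\cdot\|_1$, not $\ge$; hence the lower bound does not extend to $0<p<1$ by that argument, and the corollary's range $0<p\le\infty$ for the left-hand inequalities inherits the same unresolved gap as the paper's own statement (Theorem~\ref{LowerBnd,Ha]} is only proved for $1\le p\le\infty$).
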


\subsection{Cubature formulas}
We are interested in  cubature formulas on Smolyak grids for approximately computing of the integral 
\begin{equation} \nonumber
I(f) :=
\int_{[0,1]^d} f(x)\,dx.
\end{equation}

If $f \in \Uas$, we use  the cubature formula on grids $\mathring{G}(m)$ given by
\begin{equation*}
\mathring{I}_m(\mathring{\Lambda}_m, f)
\ = \
\sum_{ \xi \in \mathring{G}(m)} \lambda_{\xi} f(\xi),
\end{equation*}
where $\mathring{\Lambda}_m = (\lambda_{\xi})_{\xi\in \mathring{G}(m)}$ are integration weights.
The quantity of optimal cubature $\mathring{\Int}_m(F_d)$ on Smolyak grids $\mathring{G}(m)$ is defined by
\begin{equation} \nonumber
\mathring{\Int}_m(F_d)
\ := \ \inf_{\mathring{\Lambda}_m} \  \sup_{f \in F_d} \, 
|I(f) - \mathring{I}_m(\mathring{\Lambda}_m, f)|.
\end{equation}

For a family $\mathring{\Phi}_m = \{\varphi_\xi\}_{\xi \in \mathring{G}(m)}$ of functions on $\TTd$, the linear sampling algorithm $\mathring{S}_m(\mathring{\Phi}_m ,\cdot)$ generates the cubature formula 
$\mathring{I}_m(\mathring{\Lambda}_m, f)$ on Smolyak grid $\mathring{G}(m)$ by
\begin{equation} \nonumber
\mathring{I}_m(\mathring{\Lambda}_m, f)
\ = \
\sum_{\xi \in \mathring{G}(m)} \lambda_\xi f(\xi),
\end{equation}
where the integration weights $\mathring{\Lambda}_m = (\lambda_{\xi})_{\xi\in \mathring{G}(m)}$ are given by 
$
\lambda_\xi = \int_{\IId} \varphi_\xi(x) \ dx.
$
Hence, we have
$
 |I(f) - \mathring{I}_m(\mathring{\Lambda}_m, f)|
 \ \le \
 \|f - \mathring{S}_m(\mathring{\Phi}_m,f)\|_1
$
and consequently,
\begin{equation} \label{[i_n<s_n(s)]}
\mathring{\Int}_m(F_d)
\ \le \
\mathring{s}_m(F_d)_1.
\end{equation}

\begin{theorem} \label{theorem[Int_m<Has]}
Let $0 < \alpha \le 2$. Then we have for every
every $m \ge d$, 
\begin{equation}  \label{[Int_m<Has](1)}
 2^{-7d}\,  \beta(d,m)\, 2^{- \alpha m} \beta(d,m)
\ \le \  
\mathring{\Int}_m^s(\Uas) 
\ \le \  
 2^{-2d}\,  \beta(d,m)\, 2^{- \alpha m}. 
\end{equation}
Moreover, if in addition, $m \ge 2(d-1)$,
\begin{equation}  \label{[Int_m<Has](2)}
(2^\alpha -1)^{-1} 2^{-7d}\, 2^{- \alpha m} \, \binom{m}{d-1}
\ \le \
\mathring{\Int}_m(\Uas)
\ \le \  
 a^\circ(d)\, 2^{-2d}\, 2^{- \alpha m} \, \binom{m}{d-1}, 
\end{equation}
where 
\begin{equation} \nonumber
a^\circ(d)
:= \ |2^\alpha -2|^{- |\operatorname{sgn}(\alpha - 1)|} \times
\begin{cases}
1, \ & \alpha > 1,\\[1.5ex]
d, \ & \alpha = 1,\\[1.5ex]
(2^\alpha -1)^{-d}, \ & \alpha < 1.
\end{cases}
\end{equation}
\end{theorem}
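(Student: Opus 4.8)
The plan is to read off the upper bound from the inequality \eqref{[i_n<s_n(s)]} relating optimal cubature and optimal $L_1$-sampling recovery, and to obtain the lower bound by reusing the extremal function $f_{m,n}$ constructed in the proof of Theorem~\ref{LowerBnd,Has]}, exploiting the fact that it is nonnegative.

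\textbf{Upper bound.} By \eqref{[i_n<s_n(s)]} we have $\mathring{\Int}_m(\Uas)\le \mathring{s}_m(\Uas)_1$, so it suffices to specialize Theorem~\ref{theorem[|f - R_m(f)|_p,Has]} to $p=1$. Since $[2(p+1)^{1/p}]^{-d}$ equals $4^{-d}=2^{-2d}$ at $p=1$, the first inequality in \eqref{ineq[|f - R_m(f)|_p,Has(1)]} yields $\mathring{\Int}_m(\Uas)\le 2^{-2d}\,\beta(d,m)\,2^{-\alpha m}$, which is the right-hand inequality of \eqref{[Int_m<Has](1)}. For \eqref{[Int_m<Has](2)} I would observe that at $p=1$ the constant $a^\circ(\alpha,1,d)$ of \eqref{ineq[|f - R_m(f)|_p,Has(2)]} factors as $2^{-2d}$ times the constant $a^\circ(d)$ defined in the present theorem; hence \eqref{ineq[|f - R_m(f)|_p,Has(2)]} with $p=1$, combined with \eqref{[i_n<s_n(s)]}, is exactly the claimed upper bound $\mathring{\Int}_m(\Uas)\le a^\circ(d)\,2^{-2d}\,2^{-\alpha m}\binom{m}{d-1}$.

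\textbf{Lower bound.} The key point is that the function $f_{m,n}$ built in the proof of Theorem~\ref{LowerBnd,Has]} is a finite sum of the nonnegative periodized B-splines $g_{k,s}$, hence $f_{m,n}\ge 0$; therefore $I(f_{m,n})=\|f_{m,n}\|_1$, which by \eqref{[f_{m,n}>]} equals $2^{-7d}\sum_{l=m}^{n}2^{-\alpha l}\binom{l-1}{d-1}$. Moreover $f_{m,n}\in\Uas$ and $f_{m,n}(\xi)=0$ for every $\xi\in\mathring{G}(m)$, so for arbitrary weights $\mathring{\Lambda}_m$ the cubature sum $\mathring{I}_m(\mathring{\Lambda}_m,f_{m,n})=\sum_{\xi\in\mathring{G}(m)}\lambda_\xi f_{m,n}(\xi)$ vanishes. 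Thus $\sup_{f\in\Uas}|I(f)-\mathring{I}_m(\mathring{\Lambda}_m,f)|\ge I(f_{m,n})$ for every $\mathring{\Lambda}_m$; taking the infimum over $\mathring{\Lambda}_m$ and then letting $n\to\infty$ (legitimate since the partial sums $\|f_{m,n}\|_1$ increase to a finite limit) gives $\mathring{\Int}_m(\Uas)\ge 2^{-7d}\sum_{l=m}^{\infty}2^{-\alpha l}\binom{l-1}{d-1}=2^{-7d}\,2^{-\alpha m}\,F_{m-1,d-1}(2^{-\alpha})$.

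\textbf{Finishing.} To reach the stated form I would copy the last step of the proof of Theorem~\ref{LowerBnd,Has]}: by \eqref{[<F_{m,n}<]} one has $F_{m-1,d-1}(t)>tF_{m,d-1}(t)$, and from \eqref{eq[BG]} with $t=2^{-\alpha}$ one computes $tF_{m,d-1}(t)=\beta(\alpha,d,m)$, whence $\mathring{\Int}_m(\Uas)\ge 2^{-7d}\,\beta(d,m)\,2^{-\alpha m}$, the left-hand inequality of \eqref{[Int_m<Has](1)} (the double $\beta(d,m)$ in its printed statement being a typo). Finally, keeping only the $s=d-1$ term in the sum defining $\beta(d,m)$ gives $\beta(d,m)\ge(2^\alpha-1)^{-1}\binom{m}{d-1}$, upgrading this to $\mathring{\Int}_m(\Uas)\ge(2^\alpha-1)^{-1}\,2^{-7d}\,2^{-\alpha m}\binom{m}{d-1}$, the lower bound in \eqref{[Int_m<Has](2)}. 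There is no genuine obstacle here; the whole argument is essentially bookkeeping, the only mildly delicate point being the observation that the already-constructed extremal function is nonnegative, which makes its $L_1$-norm coincide with its integral and hands us the cubature lower bound at no extra cost.
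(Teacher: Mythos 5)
Your proposal is correct and follows essentially the same route as the paper: the upper bounds come from the inequality $\mathring{\Int}_m(\Uas)\le \mathring{s}_m(\Uas)_1$ together with Theorem~\ref{theorem[|f - R_m(f)|_p,Has]} at $p=1$, and the lower bounds come from reusing the nonnegative extremal function $f_{m,n}$ of Theorem~\ref{LowerBnd,Has]}, whose vanishing on $\mathring{G}(m)$ kills every cubature sum and whose integral equals its $L_1$-norm. You also correctly spot that the duplicated factor $\beta(d,m)$ in \eqref{[Int_m<Has](1)} is a typo.
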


\begin{proof}
The upper bounds in \eqref{[Int_m<Has](1)}--\eqref{[Int_m<Has](2)} follow from \eqref{[i_n<s_n(s)]} and 
Theorem \ref{theorem[|f - R_m(f)|_p,Has]}. 
To prove the lower bounds, we take the function $f_{m,n} \in \Uas$ as in \eqref{[f_{m,n}]} with the property 
$
f_{m,n} (\xi) \ = 0,  \ \xi \in \mathring{G}(m).
$
Notice that $f_{m,n}$ is a nonnegative function. Hence, we have 
$\mathring{\Lambda}^s_m(\mathring{\Phi}_m,f_{m,n}) = 0$ for arbitrary $\mathring{\Phi}_m$, and consequently, by 
\eqref{[f_{m,n}>]} for arbitrary $n \ge m$,
\begin{equation}  \nonumber
\mathring{\Int}_m(\Uas)
\ \ge \
|f_{m,n} -  \mathring{\Lambda}^s_m(\mathring{\Phi}_m,f_{m,n})| 
\ = \ 
 \|f_{m,n}\|_1
\ \ge \ 
2^{-7d} \sum_{l=m}^n 2^{-\alpha l} \binom{l-1}{d-1}.
\end{equation}
Comparing with \eqref{LowerBnd,Has(1)]}, we can see that $\mathring{\Int}_m^s(\Uas)$ can be estimated from below as in the proof of Theorem \eqref{LowerBnd,Has]} for $\mathring{s}_m^s(\Uas)$. This proves the lower bounds 
in \eqref{[Int_m<Has](1)}--\eqref{[Int_m<Has](2)}.  
\end{proof}

If $f \in \Uan$, we use  the cubature formula on grids $G^\nu(m)$ given by
\begin{equation*}
I^\nu_m(\Lambda^\nu_m, f)
\ = \
\sum_{ \xi \in G^\nu(m)} \lambda_{\xi} f(\xi),
\end{equation*}
where $\Lambda^\nu_m = (\lambda_{\xi})_{\xi\in G^\nu(m)}$ are integration weights.
The quantity of optimal cubature $\Int^\nu_m(F_d)$ on Smolyak grids $G^\nu(m)$ is introduced by
\begin{equation} \nonumber
\Int^\nu_m(F_d)
\ := \ \inf_{\Lambda_m} \  \sup_{f \in F_d} \, |I(f) - I^\nu_m(\Lambda_m, f)|.
\end{equation}
Put
\[
\gamma(\alpha,\nu,m)
:= \
 \sum_{l=0}^\nu \binom{\nu}{l}\, 4^{-l} \, \beta(\alpha,l,m). 
\]
In a similar way to Theorem~\ref{theorem[Int_m<Has]} we obtain

\begin{theorem} \label{theorem[Int_m<Ha]}
Let $0 < \alpha \le 2$ and $1 \le \nu \le d$. Then we have for every
every $m \ge \nu$,
\begin{equation}  \nonumber
 2^{- \alpha m} \sum_{l=0}^\nu \binom{\nu}{l}\, 2^{-7l}  \beta(l,m)
\ \le \
\Int_m^\nu(\Uan)
\ \le \
 2^{- \alpha m} \sum_{l=0}^\nu \binom{\nu}{l}\, 2^{-2l} \beta(l,m).
\end{equation}
Moreover, if in addition, $m \ge 2(\nu-1)$,
\begin{equation}  \nonumber
[2^7(2^\alpha - 1)]^{-1}\, \biggl(2^\alpha - \frac{127}{128}\biggl)^{\nu-1}
   2^{-\alpha m}\, \binom{m}{\nu-1}
\ \le \
\Int^\nu_m(\Uan)
\ \le \  
 a(\nu)\,   2^{- \alpha m} \, \binom{m}{\nu-1}, 
\end{equation}
where 
\begin{equation} \nonumber
a(\nu)
:= \ |2^\alpha -2|^{- |\operatorname{sgn}(\alpha - 1)|} \times
\begin{cases}
(5/4)^\nu, \ & \alpha > 1,\\[1.5ex]
\nu\,(5/4)^\nu, \ & \alpha = 1,\\[1.5ex]
[1 + 1/4(2^\alpha -1)]^\nu, \ & \alpha < 1.
\end{cases}
\end{equation}
\end{theorem}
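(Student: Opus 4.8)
The plan is to obtain the upper bounds from the sampling‑recovery results already proved and the lower bounds from a nonnegative extremal function that vanishes on $G^\nu(m)$, in complete parallel with the proof of Theorem~\ref{theorem[Int_m<Has]}.

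\emph{Upper bounds.} First I would record the cubature analogue of \eqref{[i_n<s_n(s)]}: for any family $\Phi^\nu_m=\{\varphi_\xi\}_{\xi\in G^\nu(m)}$ the weights $\lambda_\xi:=\int_{\IId}\varphi_\xi(x)\,dx$ define a cubature formula on $G^\nu(m)$ with $|I(f)-I^\nu_m(\Lambda^\nu_m,f)|\le\|f-S^\nu_m(\Phi^\nu_m,f)\|_1$, hence $\Int^\nu_m(F_d)\le s^\nu_m(F_d)_1$. Applying Theorem~\ref{theorem[|f - R^nu_m(f)|_p]} with $p=1$—so that $(p+1)^{-l/p}=2^{-l}$ and the quantity $\gamma(\nu,m)$ there specializes to $\sum_{l=0}^\nu\binom{\nu}{l}2^{-2l}\beta(l,m)=\gamma(\alpha,\nu,m)$—gives the first upper bound at once. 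For the refined bound, under the extra hypothesis $m\ge 2(\nu-1)$, I would rerun the chain \eqref{estimation[|f - R_m(f)|_p](3)} with $p=1$ and $d$ replaced by $\nu$, but estimate the tail generating functions $F_{m,l-1}(2^{-\alpha})$ by Lemma~\ref{lemma[sum(1)]} instead of Lemma~\ref{lemma[sum(2)]}; since $\binom{m}{l-1}\le\binom{m}{\nu-1}$ for $1\le l\le\nu\le m/2$ and the $l=0$ term carries $\binom{m}{-1}=0$, this yields
\[
\Int^\nu_m(\Uan)\ \le\ 2^{-\alpha(m+1)}\binom{m}{\nu-1}\sum_{l=0}^\nu\binom{\nu}{l}4^{-l}b_{l-1}(2^{-\alpha}).
\]

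It then remains to show $2^{-\alpha}\sum_{l=0}^\nu\binom{\nu}{l}4^{-l}b_{l-1}(2^{-\alpha})\le a(\nu)$, for which I would split into the three ranges dictated by the definition of $b_n$. For $\alpha>1$ one has $2^{-\alpha}<1/2$, so $b_{l-1}(2^{-\alpha})=(1-2^{1-\alpha})^{-1}$ is independent of $l$ and the sum collapses to $|2^\alpha-2|^{-1}(5/4)^\nu$; for $\alpha<1$ one has $2^{-\alpha}>1/2$, so $b_{l-1}(2^{-\alpha})=(2^{1-\alpha}-1)^{-1}(2^\alpha-1)^{-l}$, a geometric series summing to $|2^\alpha-2|^{-1}\bigl[1+1/(4(2^\alpha-1))\bigr]^\nu$; and for $\alpha=1$ one has $b_{l-1}(1/2)=2l$, so by $\sum_l l\binom{\nu}{l}x^l=\nu x(1+x)^{\nu-1}$ the sum equals $(\nu/5)(5/4)^\nu\le\nu(5/4)^\nu$. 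In each regime the result is bounded by $a(\nu)$ (with equality when $\alpha\ne1$), the prefactor $|2^\alpha-2|^{-|\operatorname{sgn}(\alpha-1)|}$ emerging automatically and equal to $1$ at $\alpha=1$ by the convention $x^0=1$.

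\emph{Lower bounds.} For these I would reuse the test function from the proof of Theorem~\ref{LowerBnd,Ha]}: for $n>m\ge\nu$ take $\phi^{[\nu]}_{m,n}=\sum_{u\subset[\nu]}f^u_{m,n}\in\Uan$, which is nonnegative and satisfies $\phi^{[\nu]}_{m,n}(\xi)=0$ for every $\xi\in G^\nu(m)$. Since all sampled values vanish, \emph{every} cubature formula on $G^\nu(m)$ returns $I^\nu_m(\Lambda^\nu_m,\phi^{[\nu]}_{m,n})=0$, while nonnegativity gives $I(\phi^{[\nu]}_{m,n})=\|\phi^{[\nu]}_{m,n}\|_1$; hence $\Int^\nu_m(\Uan)\ge\|\phi^{[\nu]}_{m,n}\|_1$ for all $n>m$. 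The $L_1$‑norm was already computed in the proof of Theorem~\ref{LowerBnd,Ha]} (with $d$ replaced by $\nu$) as $\sum_{s=1}^\nu\binom{\nu}{s}2^{-7s}\sum_{l=m}^n 2^{-\alpha l}\binom{l-1}{s-1}$; letting $n\to\infty$, identifying the inner sums as multiples of $F_{m-1,s-1}(2^{-\alpha})$, and applying \eqref{[<F_{m,n}<]} exactly as in the chain \eqref{LowerBnd,Ha(3)]} produces both the expression $\gamma'(\nu,m)\,2^{-\alpha m}$ and, after the binomial rearrangement there, the closed form $[2^7(2^\alpha-1)]^{-1}\bigl(2^\alpha-\tfrac{127}{128}\bigr)^{\nu-1}2^{-\alpha m}\binom{m}{\nu-1}$.

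\emph{Main obstacle.} Everything except one step is a direct transcription of the proofs of Theorems~\ref{theorem[|f - R^nu_m(f)|_p]}, \ref{LowerBnd,Ha]} and \ref{theorem[Int_m<Has]}. The only place requiring genuine care is the evaluation of the weighted binomial sum $\sum_{l=0}^\nu\binom{\nu}{l}4^{-l}b_{l-1}(2^{-\alpha})$ that produces the constant $a(\nu)$: one must keep the piecewise definition of $b_n$ straight, treat the degenerate case $\alpha=1$ (where the summand picks up a factor $l$) with a different binomial identity, and track the $l=0$ boundary term consistently in both the non‑refined and the refined bounds. I expect this bookkeeping, rather than any conceptual difficulty, to be the crux.
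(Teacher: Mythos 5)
Your proposal is correct and follows essentially the same route as the paper, which itself only says the theorem is obtained ``in a similar way to Theorem~\ref{theorem[Int_m<Has]}'': upper bounds via $\Int^\nu_m(\Uan)\le s^\nu_m(\Uan)_1$ together with Theorem~\ref{theorem[|f - R^nu_m(f)|_p]} at $p=1$, and lower bounds via the nonnegative test function $\phi^{[\nu]}_{m,n}$ vanishing on $G^\nu(m)$ from the proof of Theorem~\ref{LowerBnd,Ha]}. Your explicit evaluation of $\sum_{l}\binom{\nu}{l}4^{-l}b_{l-1}(2^{-\alpha})$ in the three regimes of $\alpha$ supplies detail the paper omits and correctly reproduces the stated constant $a(\nu)$.
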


\section{Sampling recovery based on B-spine quasi-interpolation}
 \label{Sampling recovery[alpha>2]}
 
In this section, we extend the results on upper bounds of $s^\nu_m(\Uan)_p$ and $\Int_m^\nu(\Uan)$ 
in Subsection~\ref{Upper bounds, alpha<2} to arbitrary smoothness $\alpha > 0$. To this end we construct B-spline quasi-interpolation representations for  continuous functions on $\TTd$, find an explicit formula for the coefficients functionals and estimate the quasi-norm of $L_p(\TTd)$ for the component functions in these representations for functions in $\Ha$.
 
\subsection{B-spline quasi-interpolation representations} 
\label{Quasi-interpolant}

In order to construct B-spline quasi-interpolation representations for continuous functions on $\TTd$, we preliminarily introduce quasi-interpolation operators for functions on $\RRd$. For a given natural number $\ell,$ denote by  $M_\ell$ the cardinal B-spline of order $\ell$ with support $[0,\ell]$ and 
knots at the points $0, 1,...,\ell$. 
We fixed $r \in \NN$ and take the cardinal B-spline $M:= M_{2r}$ of even order $2r$.
Let $\Lambda = \{\lambda(s)\}_{|j| \le \mu}$ be a given finite even sequence, i.e., 
$\lambda(-j) = \lambda(j)$ for some $\mu \ge r - 1$. 
We define the linear operator $Q$ for functions $f$ on $\RR$ by  
\begin{equation} \label{def:Q}
Q(f,x):= \ \sum_{s \in \ZZ} \Lambda (f,s)M(x-s), 
\end{equation} 
where
\begin{equation} \label{def:Lambda}
\Lambda (f,s):= \ \sum_{|j| \le \mu} \lambda (j) f(s-j + r).
\end{equation}
The operator $Q$ is local and bounded in $C(\RR)$  (see \cite[p. 100--109]{C92}).
An operator $Q$ of the form \eqref{def:Q}--\eqref{def:Lambda} is called a 
{\it quasi-interpolation operator in} $C(\RR)$ if  it reproduces 
$\Pp_{2r-1}$, i.e., $Q(f) = f$ for every $f \in \Pp_{2r-1}$, where $\Pp_l$ denotes the set of $d$-variate polynomials of degree at most $l-1$ in each variable.

If $Q$ is a quasi-interpolation operator of the form 
\eqref{def:Q}--\eqref{def:Lambda}, for $h > 0$ and a function $f$ on $\RR$, 
we define the operator $Q(\cdot;h)$ by
$
Q(f;h) 
:= \ 
\sigma_h \circ Q \circ \sigma_{1/h}(f),
$
where $\sigma_h(f,x) = \ f(x/h)$.
Let $Q$ be a quasi-interpolation operator of the form \eqref{def:Q}--\eqref{def:Lambda} in $C({\RR}).$ 
If $k \in \ZZ_+ $, we introduce the operator $Q_k$  by  
\begin{equation*}
Q_k(f,x) := \ Q(f,x;h^{(k)}), \  x \in \RR, \quad h^{(k)}:= \ (2r)^{-1}2^{-k}. 
\end{equation*}
We define the integer translated dilation $M_{k,s}$ of $M$ by   
\begin{equation*}
M_{k,s}(x):= \ M(2r2^k x - s), \ k \in {\ZZ}_+, \ s \in \ZZ. 
\end{equation*}
Then we have for $k \in {\ZZ}_+$,
\begin{equation} \nonumber
Q_k(f)(x)  \ = \ 
\sum_{s \in \ZZ} a_{k,s}(f)M_{k,s}(x), \ \forall x \in \RR, 
\end{equation}
where the coefficient functional $a_{k,s}$ is defined by
\begin{equation} \label{def[a_{k,s}(f)]}
a_{k,s}(f):= \ \Lambda(f,s;h^{(k)}) 
= \   
\sum_{|j| \le \mu} \lambda (j) f(h^{(k)}(s-j+r)).
\end{equation}
Notice that $Q_k(f)$ can be written in the form:
\begin{equation} \label{[L_{k,s}-representation]}
Q_k(f)(x)  \ = \ 
\sum_{s \in \ZZ} f(h^{(k)}(s+r))L_k(x-s), \ \forall x \in \RR, 
\end{equation}
where the function $L_k$ is defined by
\begin{equation} \label{L_{k,s}}
L_k:= \  
= \   
\sum_{|j| \le \mu} \lambda (j) M_{k,j}.
\end{equation}
From \eqref{[L_{k,s}-representation]} and \eqref{L_{k,s}} we get for a function $f$ on $\RR$,
\begin{equation} \label{[|Q_k(f)|<]}
\|Q_k(f)\|_{C(\RR)} 
\ \le \ 
\|L_\Lambda\|_{C(\RR)} \|f\|_{C(\RR)}
\ \le \ 
\|\Lambda \|\|f\|_{C(\RR)},    
\end{equation}
where
\begin{equation} \label{[L]}
L_\Lambda(x):= \  
= \   
\sum_{s \in \ZZ} \sum_{|j| \le \mu} \lambda (j) M(x-j-s),
\quad
\|\Lambda \|= \ \sum_{|j| \le \mu} |\lambda (j)|.
\end{equation} 
 
For $k \in \ZZdp$, let the mixed operator $Q_k$ be defined by
\begin{equation} \label{def:Mixed[Q_k]} 
Q_k:= \prod_{i=1}^d  Q_{k_i},
\end{equation}
where the univariate operator
$Q_{k_i}$ is applied to the univariate function $f$ by considering $f$ as a 
function of  variable $x_i$ with the other variables held fixed.
We define the $d$-variable B-spline $M_{k,s}$ by
\begin{equation} \label{def:Mixed[M_{k,s}]}
M_{k,s}(x):=  \ \prod_{i=1}^d M_{k_i, s_i}( x_i),  
\ k \in {\ZZ}^d_+, \ s \in {\ZZ}^d,
\end{equation}
where ${\ZZ}^d_+:= \{s \in {\ZZ}^d: s_i \ge 0, \ i \in [d] \}$.
Then we have
\begin{equation*} 
Q_k(f,x)  \ = \ 
\sum_{s \in \ZZ^d} a_{k,s}(f)M_{k,s}(x), \quad \forall x \in \RRd, 
\end{equation*}
where $M_{k,s}$ is the mixed B-spline  defined in \eqref{def:Mixed[M_{k,s}]}, 
and
\begin{equation} \label{def:Mixed[a_{k,s}(f)]}
a_{k,s}(f) 
\ = \   
\biggl(\prod_{j=1}^d a_{k_j,s_j}\biggl)(f),
\end{equation}
and the univariate coefficient functional
$a_{k_i,s_i}$ is applied to the univariate function $f$ by considering $f$ as a 
function of  variable $x_i$ with the other variables held fixed. 

Since $M(2r\,2^k x)=0$ for every $k \in \ZZ_+$ and $x \notin (0,1)$, we can extend  the univariate B-spline  
$M(2r\,2^k\cdot)$ to an $1$-periodic function on the whole $\RR$. Denote this periodic extension by $N_k$ and define 
\begin{equation*}
N_{k,s}(x):= \ N_k(x - s), \ k \in {\ZZ}_+, \ s \in I(k), 
\end{equation*}
where $I(k) := \{0,1,..., 2r2^k - 1\}$.
We define the $d$-variable B-spline $N_{k,s}$ by
\begin{equation} \nonumber
N_{k,s}(x):=  \ \prod_{i=1}^d N_{k_i, s_i}( x_i),  \ k \in {\ZZ}^d_+, \ s \in I^d(k),
\end{equation}
where $I^d(k):=\prod_{i=1}^d I(k_i)$.
Then we have for functions $f$ on $\TTd$,
\begin{equation} \label{def[periodicQI]}
Q_k(f,x)  \ = \ 
\sum_{s \in I^d(k)} a_{k,s}(f) N_{k,s}(x), \quad \forall x \in \TTd. 
\end{equation}
Since the function $L_\Lambda$ defined in \eqref{[L]} is $1$-periodic, from \eqref{[|Q_k(f)|<]} it follows that for a function $f$ on $\TT$,
\begin{equation} \label{[|Q_k(f)|<(T)]}
\|Q_k(f)\|_{C(\TT)} 
\ \le \ 
\|L_\Lambda\|_{C(\TT)} \|f\|_{C(\TT)}
\ \le \ 
\|\Lambda \|\|f\|_{C(\TT)},    
\end{equation}

For $k \in \ZZdp$, we write  $k  \to  \infty$ if $k_i  \to  \infty$ for $i \in [d]$). In a way similar to the proof of \cite[Lemma 2.2]{Di13} one can show that  for every $f \in C(\TTd)$,
\begin{equation} \nonumber
\|f - Q_k(f)\|_{C(\TTd)}
\ \le \
C \sum_{u \subset [d], \ u \not= \varnothing} \omega_{2r}^u(f,2^{-k}),
\end{equation}
and, consequently,
\begin{equation} \label{ConvergenceMixedQ_k(f)}
\|f - Q_k(f)\|_{C(\TTd)} \to 0 , \ k  \to  \infty.
\end{equation}

For convenience we define the univariate operator $Q_{-1}$ by putting $Q_{-1}(f)=0$ for all $f$ on $\II$. Let  the operators $q_k$ be defined 
in the manner of the definition \eqref{def:Mixed[Q_k]} by
\begin{equation} \label{eq:Def[q_k]}
q_k\ := \ \prod_{i=1}^d \biggl(Q_{k_i}- Q_{k_i-1}\biggl), \ k \in {\ZZ}^d_+. 
\end{equation}

From the equation $Q_k = \sum_{k' \le k}q_{k'}$
and \eqref{ConvergenceMixedQ_k(f)} it is easy to see that 
a continuous function  $f$ has the decomposition
$
f \ =  \ \sum_{k \in {\ZZ}^d_+} q_k(f)
$
with the convergence in the norm of $C(\TTd)$.  
From  the refinement equation for the B-spline $M$, in the univariate case, we can represent the component functions $q_k(f)$ as 
 \begin{equation} \label{eq:RepresentationMixedq_k(f)}
q_k(f) 
= \ \sum_{s \in I^d(k)}c_{k,s}(f) N_{k,s},
\end{equation}
where $c_{k,s}$ are certain coefficient functionals of 
$f$. In the multivariate case, the representation  \eqref{eq:RepresentationMixedq_k(f)} holds true 
with the $c_{k,s}$ which are defined in the manner of the definition  
\eqref{def:Mixed[a_{k,s}(f)]} by
\begin{equation} \nonumber
c_{k,s}(f) 
\ = \   
\biggl(\prod_{j=1}^d c_{k_j,s_j}\biggl)(f).
\end{equation}
See \cite{Di11} for details. Thus, we have proven the following periodic B-spline quasi-interpolation representation for continuous functions on $\TTd$.
\begin{lemma} \label{lemma[representation]}
Every continuous function $f$ on $\TTd$ is represented as B-spline series 
\begin{equation} \label{eq:B-splineRepresentation}
f \ = \sum_{k \in {\ZZ}^d_+} \ q_k(f) = 
\sum_{k \in {\ZZ}^d_+} \sum_{s \in I^d(k)} c_{k,s}(f)N_{k,s}, 
\end{equation}  
converging in the norm of $C(\TTd)$, where the coefficient functionals $c_{k,s}(f)$ are explicitly constructed as
linear combinations of at most $m_0$ of function
values of $f$ for some $m_0 \in \NN$ which is independent of $k,s$ and $f$.
\end{lemma}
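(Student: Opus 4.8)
The plan is to build the representation \eqref{eq:B-splineRepresentation} by first establishing it in the univariate case and then tensorizing. For $d=1$, recall that each operator $Q_k$ has the local form \eqref{def[periodicQI]}, so $Q_k(f) = \sum_{s \in I(k)} a_{k,s}(f) N_{k,s}$ with the coefficient functional $a_{k,s}$ an explicit finite linear combination of at most $2\mu+1$ function values of $f$ by \eqref{def[a_{k,s}(f)]}. The difference operator $q_k = Q_k - Q_{k-1}$ (with $Q_{-1}=0$) then must be rewritten as a single B-spline expansion at level $k$: here one invokes the refinement equation for the cardinal B-spline $M=M_{2r}$, namely $M(x) = 2^{-(2r-1)}\sum_{i=0}^{2r}\binom{2r}{i}M(2x-i)$, which lets us express each $N_{k-1,s}$ (a coarse-level B-spline) as a finite linear combination of fine-level B-splines $N_{k,s'}$. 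Substituting this into $Q_{k-1}(f)$ and collecting terms against $N_{k,s}$ produces \eqref{eq:RepresentationMixedq_k(f)} for $d=1$, where $c_{k,s}(f)$ is a finite linear combination — with a fixed, $k$- and $s$-independent number of terms — of function values $a_{k,s'}(f)$ and $a_{k-1,s''}(f)$, hence of function values of $f$ themselves; the number of such values is bounded by some absolute constant $m_1$ depending only on $r$ and $\mu$.

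Next I would pass to the multivariate case by the tensor-product structure already set up in \eqref{def:Mixed[Q_k]} and \eqref{eq:Def[q_k]}. Since $q_k = \prod_{i=1}^d (Q_{k_i} - Q_{k_i-1})$ acts coordinatewise, applying the univariate identity \eqref{eq:RepresentationMixedq_k(f)} in each variable and using $N_{k,s}(x) = \prod_i N_{k_i,s_i}(x_i)$ gives $q_k(f) = \sum_{s\in I^d(k)} c_{k,s}(f) N_{k,s}$ with $c_{k,s}(f) = \bigl(\prod_{j=1}^d c_{k_j,s_j}\bigr)(f)$, exactly as stated. Because each univariate $c_{k_j,s_j}$ is a linear combination of at most $m_1$ function values, the composition $c_{k,s}$ is a linear combination of at most $m_0 := m_1^d$ function values of $f$ (values at the appropriate grid points of the form $h^{(k)}(s'+r)$ in the relevant coordinate ranges), and $m_0$ is independent of $k$, $s$, and $f$.

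Finally, convergence: from $Q_k = \sum_{k'\le k} q_{k'}$ one has $f - \sum_{k'\le k} q_{k'}(f) = f - Q_k(f)$, and the latter tends to $0$ in $C(\TTd)$ as $k\to\infty$ by \eqref{ConvergenceMixedQ_k(f)}, which in turn rests on the local boundedness bound \eqref{[|Q_k(f)|<(T)]} and the modulus-of-smoothness estimate quoted from \cite[Lemma 2.2]{Di13}. Partial summation over the full index set $\{k\in\ZZdp\}$ then yields \eqref{eq:B-splineRepresentation} with convergence in $C(\TTd)$. The one genuinely technical point — and the main obstacle — is the bookkeeping in the refinement step: one must verify carefully that after expanding $Q_{k-1}(f)$ into level-$k$ B-splines and subtracting, the resulting coefficient against each $N_{k,s}$ is indeed a linear combination of boundedly many function values with coefficients that do not blow up, and that the periodization (the passage from $M_{k,s}$ on $\RR$ to $N_{k,s}$ on $\TT$, folding indices modulo $2r\,2^k$) does not spoil this locality near the "seam" of the torus; this is handled exactly as in \cite{Di11}, to which we refer for the detailed computation.
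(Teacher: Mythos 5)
Your proposal follows essentially the same route as the paper: establish the univariate identity for $q_k(f)=Q_k(f)-Q_{k-1}(f)$ via the refinement equation for $M_{2r}$, tensorize using the product structure of $q_k$ and $N_{k,s}$, and obtain convergence of the rectangular partial sums from $Q_k=\sum_{k'\le k}q_{k'}$ together with \eqref{ConvergenceMixedQ_k(f)}. The paper likewise delegates the detailed refinement/periodization bookkeeping to \cite{Di11}, so your argument is a faithful (indeed slightly more explicit) version of the paper's own proof.
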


\subsection{A formula for the coefficients in B-spline quasi-interpolation representations} 
\label{Coefficient}

In this subsection, we find a explicit formula for the coefficients 
$c_{k,s}(f)$ and hence, estimate the quasi-norm of $\|q_k(f)\|_q$ of the component functions in the periodic B-spline quasi-interpolation representations \eqref{eq:B-splineRepresentation}. 

If $h \in \RRd$, we define the shift operator $T_h^s$ for functions $f$ on $\TTd$ by
$
T_h(f) := \
f(\cdot + h).
$
Recall that a $d$-variate Laurent polynomial is call a function $P$ of the form
\begin{equation} \label{def[P]}
P(z) = \
\sum_{s \in A}  c_s z^s,
\end{equation}
where $A$ is a finite subset in $\ZZd$ and $z^s:= \prod_{j=1}^d z_j^{s_j}$.
A $d$-variate Laurent polynomial $P$ as \eqref{def[P]} generates the operator $T_h^{[P]}$ by 
\begin{equation} \label{def[T_h^{[P]}]}
T_h^{[P]}(f) = \
\sum_{s \in A}  c_s T_{sh}(f).
\end{equation}
Sometimes we also write $T_h^{[P]}=T_h^{[P(z)]}$.
Notice that any operation over polynomials generates a corresponding operation over operators $T_h^{[P]}$. Thus, in particular, we have 
\begin{equation*}
T_h^{[a_1P_1 + a_2P_2]}(f) \ = \
a_1 T_h^{[P_1]}(f) + a_2 T_h^{[P_2]}(f), \quad T_h^{[P_1.P_2]}(f) \ = \ T_h^{[P_1]}\circ T_h^{[P_2]}(f).
\end{equation*}
By definitions we have 
\begin{equation*}
\Delta_h^l
= \
T_h^{[D_l]}, \  D_l:= \prod_{j=1}^d (z_j - 1)^l, \quad
\Delta_h^{l,u}
= \
T_h^{[D_{l,u}]}, \  D_{l,u}:= \prod_{j \in u}(z_j - 1)^l.
\end{equation*}
We say that a $d$-variate polynomial is a {\em tensor product polynomial} if it is of the form
$
P(z) = \
\prod_{j=1}^d P_j(z_j),
$
where $P_j(z_j)$ are univariate polynomial in variable $z_j$. 

\begin{lemma} \label{lemma[factor]}
Let $P$ be a tensor product Laurent polynomial, $h \in \RRd$ with $h_j \not=0$, and $l \in \NN$. Assume that $T_h^{[P]}(g)=0$ for every polynomial $g \in \Pp_{l-1}$ Then $P$ has a factor $D_l$ and consequently,
\begin{equation*}
T_h^{[P]}
\ = \
T_h^{[P^*]} \circ \Delta_h^l, \quad  P(z)= D_lP^*(z),
\end{equation*}
where $P^*$ is a tensor product Laurent polynomial.
\end{lemma}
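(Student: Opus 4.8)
The statement is essentially a multivariate, tensor-product version of the classical fact that if a univariate (Laurent) polynomial $p(z)$, acting via $T_h^{[p]}$, annihilates every polynomial of degree $\le l-1$, then $(z-1)^l \mid p(z)$. So the plan is to first settle the univariate case carefully and then lift it to $d$ variables by exploiting the tensor product structure of both $P$ and $T_h^{[P]}$.

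\emph{Univariate case.} Write $P(z)=\sum_{s\in A} c_s z^s$ with $A\subset\ZZ$ finite, and fix $h\neq 0$. The hypothesis $T_h^{[P]}(g)=0$ for all $g\in\Pp_{l-1}$ means $\sum_{s\in A} c_s\, g(x+sh)=0$ for all $x$ and all polynomials $g$ of degree $\le l-1$. Testing against the monomials $g(x)=x^j$, $0\le j\le l-1$, and expanding $(x+sh)^j$ binomially, one gets (after comparing coefficients of powers of $x$, or simply by taking $x=0$ and using that $\{x^j\}$ already suffices) the linear conditions $\sum_{s\in A} c_s\, s^j = 0$ for $j=0,1,\dots,l-1$. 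Equivalently, the polynomial $\tilde P(z):=\sum_{s} c_s z^{s}$, viewed after clearing denominators as an honest polynomial $z^{N}P(z)$ with $N=-\min A$, has a zero of order $\ge l$ at $z=1$: indeed $\frac{d^j}{dz^j}\big(z^N P(z)\big)\big|_{z=1}$ is a linear combination of the quantities $\sum_s c_s\, (s+N)(s+N-1)\cdots$, which vanish because the falling factorials of degree $\le l-1$ span the same space as $\{s^j:0\le j\le l-1\}$. Hence $(z-1)^l$ divides $z^N P(z)$, and since $z-1$ and $z$ are coprime, $(z-1)^l$ divides $P(z)$ as a Laurent polynomial; write $P(z)=(z-1)^l P^*(z)$ with $P^*$ a Laurent polynomial. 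The operator identity $T_h^{[P]}=T_h^{[P^*]}\circ T_h^{[(z-1)^l]}=T_h^{[P^*]}\circ\Delta_h^l$ then follows from the multiplicativity $T_h^{[P_1P_2]}=T_h^{[P_1]}\circ T_h^{[P_2]}$ recorded just before the lemma.

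\emph{Tensor product / multivariate case.} Write $P(z)=\prod_{j=1}^d P_j(z_j)$. The key reduction is to show that each univariate factor $P_j$ annihilates $\Pp_{l-1}$ in one variable, i.e.\ $T_{h_j}^{[P_j]}(g_j)=0$ for every univariate $g_j$ of degree $\le l-1$; then the univariate case applied to each $P_j$ gives $P_j(z_j)=(z_j-1)^l P_j^*(z_j)$, so $P(z)=\big(\prod_j (z_j-1)^l\big)\prod_j P_j^*(z_j)=D_l(z)\,P^*(z)$ with $P^*(z):=\prod_j P_j^*(z_j)$ a tensor product Laurent polynomial, and again $T_h^{[P]}=T_h^{[P^*]}\circ\Delta_h^l$ by multiplicativity. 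To get the per-variable annihilation, feed $T_h^{[P]}$ the tensor product test functions $g(x)=\prod_{j=1}^d g_j(x_j)$ with each $g_j\in\Pp_{l-1}$: since $T_h^{[P]}=\prod_j T_{h_j}^{[P_j]}$ acting coordinatewise, $T_h^{[P]}(g)=\prod_{j=1}^d T_{h_j}^{[P_j]}(g_j)$, which by hypothesis is identically zero. Such a product of univariate functions of a single variable each vanishes identically only if one of the factors does; choosing for the other coordinates fixed polynomials $g_i$ (e.g.\ $g_i\equiv 1$) for which $T_{h_i}^{[P_i]}(1)=\big(\sum_{s} c^{(i)}_s\big)$ — wait, this constant could be zero, so instead: fix one index $j$, and suppose for contradiction $T_{h_j}^{[P_j]}(g_j)\not\equiv 0$ for some $g_j\in\Pp_{l-1}$; then pick $g_i$ for $i\neq j$ so that $T_{h_i}^{[P_i]}(g_i)\not\equiv 0$ — this is possible unless $T_{h_i}^{[P_i]}$ kills \emph{all} of $\Pp_{l-1}$, in which case $P_i$ already has the factor $(z_i-1)^l$ and there is nothing to prove for that index. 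So WLOG every $P_i$ has $T_{h_i}^{[P_i]}(g_i)\not\equiv 0$ for some $g_i\in\Pp_{l-1}$; taking the product of these $g_i$ as test function contradicts $T_h^{[P]}(g)\equiv 0$. This handles every index and finishes the proof.

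\emph{Main obstacle.} The genuinely non-routine point is the clean passage from the multivariate hypothesis to the one-dimensional statements for each $P_j$: one must be careful that ``a product $\prod_j \psi_j(x_j)$ of functions, each depending on its own variable, vanishes identically iff some $\psi_j\equiv 0$,'' and then argue index-by-index, noting that any index for which $T_{h_i}^{[P_i]}$ kills all of $\Pp_{l-1}$ is already done. The univariate divisibility step is classical and amounts to the Vandermonde-type observation that the conditions $\sum_s c_s s^j=0$ ($0\le j\le l-1$) are exactly the conditions for a zero of order $l$ at $z=1$; I would state it as a short sublemma rather than grind the binomial bookkeeping.
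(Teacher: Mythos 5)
Your univariate argument is correct and takes a genuinely different route from the paper's. The paper proves the case $d=1$ by induction on $l$: at each step it feeds the operator a polynomial of exact degree $l-1$, uses that its $(l-1)$st difference is a nonzero constant, and invokes B\'ezout's theorem to peel off one factor $(z-1)$ from the quotient polynomial. You instead extract the moment conditions $\sum_{s}c_s s^j=0$ for $0\le j\le l-1$ directly (by evaluating at $x=0$ on monomials) and convert them, via falling factorials, into the vanishing of the first $l-1$ derivatives of $z^NP(z)$ at $z=1$, hence divisibility by $(z-1)^l$. This is shorter, avoids the induction, and is a perfectly sound replacement for the paper's univariate argument.

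The multivariate reduction, however, contains a genuine gap, and the per-variable claim you are trying to establish is in fact false. Your contradiction argument shows only this: it cannot happen that \emph{every} index $i$ admits some $g_i\in\Pp_{l-1}$ with $T_{h_i}^{[P_i]}(g_i)\not\equiv 0$. That yields the existence of \emph{at least one} index $i_0$ for which $T_{h_{i_0}}^{[P_{i_0}]}$ annihilates all of $\Pp_{l-1}$; it gives no information about the remaining indices, and the phrase ``there is nothing to prove for that index'' conflates the index $i$ that blocks your choice of test function with the index $j$ you set out to handle. The ``WLOG'' that follows is not a WLOG --- it assumes away exactly the problematic configuration. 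Concretely, take $d=2$ and $P(z)=(z_1-1)^l\cdot 1$, a tensor product Laurent polynomial. Then $T_h^{[P]}=\Delta^l_{h_1}$ acting in the variable $x_1$, which annihilates every $g\in\Pp_{l-1}$, yet $P$ is not divisible by $(z_2-1)^l$, so neither the per-variable annihilation for $j=2$ nor the conclusion $D_l\mid P$ holds. To be fair, the paper's own proof dismisses this step with the single phrase ``by the tensor product argument,'' and the lemma is only ever invoked in the paper for $d=1$ (the multivariate coefficient formula is assembled coordinate-by-coordinate from univariate factorizations whose hypotheses are verified separately for each coordinate), so the defect lies as much in the statement of the lemma as in your proof. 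But as a proof of the statement as written, the multivariate step does not go through, and no proof could make it go through without strengthening the hypothesis to per-variable annihilation.
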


\begin{proof} By the tensor product argument it is enough to prove the lemma for the case $d=1$. We prove this case by induction on $l$. Let $P(z)=\sum_{s = -m}^n  c_s z^s$ for some $m,n \in \ZZ_+$. Consider first the case $l=1$. Assume that $T_h^{[P]}(g)=0$ for every constant functions $g$. Then 
replacing by $g_0=1$ in \eqref{def[T_h^{[P]}]} we get
$
T_h^{[P]}(g_0)
\ = \ 
\sum_{s = -m}^n  c_s \ = \ 0.
$
By B\'ezout's theorem $P$ has a factor $(z-1)$. This proves the lemma for $l=1$. Assume it is true for $l-1$ and $T_h^{[P]}(g)=0$ for every polynomial $g$ of degree at most $l-1$. By the induction assumption we have 
\begin{equation} \label{[induction]}
T_h^{[P]}
\ = \
T_h^{[P_1]} \circ \Delta_h^{l-1}, \quad  P(z):= (z-1)^{l-1} P_1(z).
\end{equation}
We take a proper polynomial $g_l$ of degree $l-1$ (with the nonzero eldest coefficient). Hence 
$\psi_l = \Delta_h^{l-1}(g_l) = a$ where $a$ is a nonzero constant. Similarly to the case $l=1$, from the equations
$ 0 \ = \ T_h^{[P]} (g_l) \ = \ T_h^{[P_1]} (\psi_l)$ we conclude that $P_1$ has a factor $(z-1)$. Hence, by 
\eqref{[induction]} we can see that $P$ has a factor $(z-1)^l$. The lemma is proved.
\end{proof}

Let us return to the definition of quasi-interpolation operator $Q$ of the form 
\eqref{def:Q} induced by the sequence $\Lambda$  as in \eqref{def:Lambda} which can be uniquely characterized by the univariate symmetric Laurent polynomial
\begin{equation} \nonumber
P_\Lambda(z)
:= \ 
z^r \sum_{|s|\le \mu}\lambda (s) z^s.
\end{equation}
Let the $d$-variate symmetric tensor product Laurent polynomial $P_\Lambda$ be given by 
\begin{equation} \nonumber
P_\Lambda(z)
:= \ 
z^r \prod_{j=1}^d \sum_{|s_j|\le \mu}\lambda (s_j) z^j.
\end{equation}
For the periodic quasi-interpolation operator 
$
Q_k(f)  \ = \ 
\sum_{s \in I^d(k)} a_{k,s}(f) N_{k,s} 
$
given as in \eqref{def[periodicQI]}, from \eqref{def[a_{k,s}(f)]} we get
\begin{equation} \label{eq[a_{k,s}(f)]}
a_{k,s}(f)
\ = \ 
T_{h^{(k)}}^{[P_\Lambda]}(f)(sh^{(k)}).
\end{equation}
Let us find an explicit formula for the univariate operator $q_k(f)$.
We have for $k >0$,
\begin{equation*}
\begin{aligned}
Q_k(f)
\ & = \
\sum_{s \in I(k)} T_{h^{(k)}}^{[P_\Lambda]}(f)(sh^{(k)})) N_{k,s} \\[1.5ex]
\ & = \
\sum_{s \in I(k-1)} T_{h^{(k)}}^{[P_\Lambda]}(f)(2sh^{(k)}))  N_{k,2s} 
\  +  \
\sum_{s \in I(k-1)} T_{h^{(k)}}^{[P_\Lambda]}(f)((2s+1)h^{(k)})) 
 N_{k,2s+1}. 
\end{aligned}
\end{equation*}
From \eqref{eq[a_{k,s}(f)]} and the refinement equation for $M$, we deduce that 
\begin{equation*}
\begin{aligned}
Q_{k-1}(f)
\ & = \
\sum_{s \in I(k-1)} T_{h^{(k-1)}}^{[P_\Lambda]}(f)(sh^{(k-1)})) 
\biggl[2^{-2r+1}\sum_{j =0}^{2r}\binom{2r}{j} N_{k,2s+j}\biggl]\\[1.5ex]
\ & = \
2^{-2r+1}\sum_{j =0}^r\binom{2r}{2j}\sum_{s \in I(k-1)} T_{h^{(k-1)}}^{[P_\Lambda]}(f)(sh^{(k-1)})) 
 N_{k,2s+2j} \\[1.5ex]
\ & +  
2^{-2r+1}\sum_{j =0}^{r-1}\binom{2r}{2j+1}\sum_{s \in I(k-1)} T_{h^{(k-1)}}^{[P_\Lambda]}(f)(sh^{(k-1)})) 
 N_{k,2s+2j+1} \\[1.5ex]
\ & =: \
Q_{k-1}^{\operatorname{even}}(f) + Q_{k-1}^{\operatorname{odd}}(f).
\end{aligned}
\end{equation*}
By the identities $h^{(k-1)}= 2 h^{(k)}$, $N_{k,2r2^k +m} = N_{k,m}$ and $f(h^{(k)})(2r2^k + m)= f(h^{(k)}m)$ for 
$k \in \ZZ_+$ and $m \in \ZZ$, we have
\begin{equation*}
\begin{aligned}
Q_{k-1}^{\operatorname{even}}(f)
\ & = \
2^{-2r+1}\sum_{j =0}^r\binom{2r}{2j}\sum_{s \in j+I(k-1)} T_{h^{(k)}}^{[P_\Lambda]}(f)(2(s-j)h^{(k)})) 
 N_{k,2s} \\[1.5ex]
\ &= \  
2^{-2r+1}\sum_{j =0}^r\binom{2r}{2j}\sum_{s \in I(k-1)} T_{h^{(k)}}^{[P_\Lambda]}(f)(2(s-j)h^{(k)})) 
 N_{k,2s} \\[1.5ex]
\ &= \  
\sum_{s \in I(k-1)} T_{h^{(k)}}^{[P_{\operatorname{even}}']}(f)(2sh^{(k)}) N_{k,2s}, \\
\end{aligned}
\end{equation*}
where
\begin{equation} \label{P'_even}
\ P_{\operatorname{even}}'(z) 
:= \
2^{-2r+1}P_\Lambda (z^2)\sum_{j =0}^r\binom{2r}{2j}z^{-2j}
\end{equation}
In a similar way we obtain
\begin{equation*}
Q_{k-1}^{\operatorname{odd}}(f)
\  = \
\sum_{s \in I(k-1)} T_{h^{(k)}}^{[P_{\operatorname{odd}}']}(f)((2s+1)h^{(k)}) N_{k,2s+1}, 
\end{equation*}
where
\begin{equation} \label{P'_odd}
\ P_{\operatorname{odd}}'(z) 
:= \
2^{-2r+1}P_\Lambda (z^2)\sum_{j =0}^{r-1}\binom{2r}{2j+1}z^{-2j-1}.
\end{equation}
We define
\begin{equation} \label{P_even,P_odd}
\ P_{\operatorname{even}}  
:= \
P_\Lambda - P_{\operatorname{even}}', \quad
\ P_{\operatorname{odd}}  
:= \
P_\Lambda - P_{\operatorname{odd}}'
\end{equation}
Then from the definition $q_k(f) = Q_k(f) - Q_{k-1}(f)$ we receive the following representation for $q_k(f)$,
\begin{equation} \label{eq[q_0(f)]}
q_0(f) 
 \ = \ 
\sum_{s \in I(0)} T_{h^{(0)}}^{[P_\Lambda]}(f)(sh^{(0)})) N_{0,s}, 
 \end{equation}
 and for $k>0$,
\begin{equation} \label{eq[q_k(f)]}
q_k(f) 
 \ = \ 
q_k^{\operatorname{even}}(f) + q_k^{\operatorname{odd}}(f)
 \end{equation}
 with
\begin{equation} \nonumber
\begin{aligned}
q_k^{\operatorname{even}}(f)
&= \
\sum_{s \in I(k-1)} T_{h^{(k)}}^{[P_{\operatorname{even}}]}(f)(2sh^{(k)}) N_{k,2s},\\[1.5ex]
q_k^{\operatorname{odd}}(f)
&= \ 
\sum_{s \in I(k-1)} T_{h^{(k)}}^{[P_{\operatorname{odd}}]}(f)((2s+1)h^{(k)}) N_{k,2s+1}.
\end{aligned}
\end{equation}

From the definitions of $Q_k$ and $q_k$ it follows that 
\[T_{h^{(k)}}^{[P_{\operatorname{even}}]}(g)(2sh^{(k)}) = 0 \quad \text{and} \quad  
T_{h^{(k)}}^{[P_{\operatorname{odd}}]}(g)((2s+1)h^{(k)}) = 0 \quad \text{for every} \quad g \in \Pp_r.
\] 
Hence, by 
Lemma \ref{lemma[factor]} we prove the following lemma for the univariate operators $q_k$.

\begin{lemma} \label{lemma[q_k^even&odd]}
We have
\begin{equation} \label{eq[q_k^even&odd]}
\begin{aligned}
P_{\operatorname{even}}(z)
&= \
D_{2r}(z) P_{\operatorname{even}}^*(z)\,\\[1.5ex]
P_{\operatorname{odd}}(z)
&= \ 
D_{2r}(z) P_{\operatorname{odd}}^*(z). 
\end{aligned}
\end{equation}
where $P_{\operatorname{even}}^*$, $P_{\operatorname{odd}}^*$ are a symmetric Laurent polynomial.
Therefore, in the representation \eqref{eq[q_0(f)]}--\eqref{eq[q_k(f)]} of $q_k(f)$, we have  for $k>0$,
\begin{equation} \nonumber
\begin{aligned}
q_k^{\operatorname{even}}(f)
&= \
\sum_{s \in I(k-1)} T_{h^{(k)}}^{[P_{\operatorname{even}}^*]}\circ 
\Delta_{h^{(k)}}^{2r} (f)(2sh^{(k)}) N_{k,2s},\\[1.5ex]
q_k^{\operatorname{odd}}(f)
&= \ 
\sum_{s \in I(k-1)} T_{h^{(k)}}^{[P_{\operatorname{odd}}^*]}\circ 
\Delta_{h^{(k)}}^{2r} (f)((2s+1)h^{(k)}) N_{k,2s+1}.
\end{aligned}
\end{equation}
Equivalently, in the representation \eqref{eq:RepresentationMixedq_k(f)} of $q_k(f)$, we have for $s \in I(0)$
\begin{equation} \nonumber
c_{0,s}(f) 
 \ = \ 
T_{h^{(0)}}^{[P_\Lambda]}(f)(sh^{(0)}),
 \end{equation}
 and for $k>0$ and $s \in I(k)$,
 \begin{equation} \nonumber
c_{k,s}(f) 
 \ = \
\begin{cases}
T_{h^{(k)}}^{[P_{\operatorname{even}}^*]}\circ \Delta_{h^{(k)}}^{2r} (f)(sh^{(k)}),  & s \ \text{even} \\[1.5ex]
T_{h^{(k)}}^{[P_{\operatorname{odd}}^*]}\circ \Delta_{h^{(k)}}^{2r} (f)(sh^{(k)}), & s \ \text{odd}.
\end{cases} 
\end{equation}
\end{lemma}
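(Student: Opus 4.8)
The plan is to read the lemma off from Lemma~\ref{lemma[factor]}, applied in the univariate case ($d=1$) to the Laurent polynomials $P_{\operatorname{even}}$ and $P_{\operatorname{odd}}$, and then to unwind the consequences by a purely formal manipulation of the shift operators $T_h^{[\cdot]}$. Since the quasi-interpolation operators $Q_k$ and $Q_{k-1}$ both reproduce $\Pp_{2r-1}$, the difference $q_k=Q_k-Q_{k-1}$ annihilates $\Pp_{2r-1}$; because the integer-translated dilations $\{M_{k,s}\}_{s\in\ZZ}$ of $M$ are linearly independent, every coefficient in the B-spline expansion of $q_k(g)$ must vanish for $g\in\Pp_{2r-1}$, which is exactly the pair of identities $T_{h^{(k)}}^{[P_{\operatorname{even}}]}(g)(2sh^{(k)})=0$ and $T_{h^{(k)}}^{[P_{\operatorname{odd}}]}(g)((2s+1)h^{(k)})=0$ recorded just before the lemma, now valid for all $s$ and all $g\in\Pp_{2r-1}$. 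Since $T_{h^{(k)}}^{[P_{\operatorname{even}}]}(g)$ is itself a polynomial, its vanishing on the infinite set $\{2sh^{(k)}:s\in\ZZ\}$ forces $T_{h^{(k)}}^{[P_{\operatorname{even}}]}(g)\equiv0$, and likewise for $P_{\operatorname{odd}}$; thus the hypothesis of Lemma~\ref{lemma[factor]} holds with $l=2r$, and it furnishes Laurent polynomials $P_{\operatorname{even}}^*,P_{\operatorname{odd}}^*$ with $P_{\operatorname{even}}=D_{2r}P_{\operatorname{even}}^*$ and $P_{\operatorname{odd}}=D_{2r}P_{\operatorname{odd}}^*$.

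Next I would check that the cofactors are \emph{symmetric}. The coefficient sequence of $P_\Lambda$ is palindromic about the power $z^r$ (by its definition through the even sequence $\Lambda$), hence so is that of $P_\Lambda(z^2)$, about $z^{2r}$; the finite sums $\sum_{j=0}^{r}\binom{2r}{2j}z^{-2j}$ and $\sum_{j=0}^{r-1}\binom{2r}{2j+1}z^{-2j-1}$ occurring in \eqref{P'_even} and \eqref{P'_odd} are palindromic as well (about $z^{-r}$), so by \eqref{P_even,P_odd} the Laurent polynomials $P_{\operatorname{even}}$ and $P_{\operatorname{odd}}$ are palindromic about $z^r$, which is also the centre of symmetry of the even-order factor $D_{2r}(z)=(z-1)^{2r}$. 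Factoring all three polynomials over $\CC$ (their zero multisets are invariant under $z\mapsto z^{-1}$), the quotients $P_{\operatorname{even}}^*$ and $P_{\operatorname{odd}}^*$ inherit that invariance, i.e.\ they are symmetric Laurent polynomials. This finishes the first pair of assertions.

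It remains only to substitute. Plugging $P_{\operatorname{even}}=D_{2r}P_{\operatorname{even}}^*$ and $P_{\operatorname{odd}}=D_{2r}P_{\operatorname{odd}}^*$ into the expressions for $q_k^{\operatorname{even}}(f)$ and $q_k^{\operatorname{odd}}(f)$ in \eqref{eq[q_k(f)]} and using $T_h^{[P_1P_2]}=T_h^{[P_1]}\circ T_h^{[P_2]}$ together with $T_h^{[D_{2r}]}=\Delta_h^{2r}$ gives the claimed formulas for $q_k^{\operatorname{even}}(f)$ and $q_k^{\operatorname{odd}}(f)$; matching these even- and odd-indexed sums term by term against the B-spline representation \eqref{eq:RepresentationMixedq_k(f)}, with \eqref{eq[q_0(f)]} supplying the case $k=0$, then yields the stated formulas for the coefficient functionals $c_{k,s}(f)$. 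I expect no real obstacle beyond this: everything past the factorization is formal operator calculus, and the factorization itself is a direct invocation of Lemma~\ref{lemma[factor]} fed by a reproduction property already in hand. The one step that calls for genuine (if routine) care is the symmetry claim, since the definitions \eqref{P'_even}--\eqref{P_even,P_odd} combine the dilation $z\mapsto z^2$ with multiplication by binomial sums, and one must confirm that the several centres of symmetry coincide before dividing out $D_{2r}$.
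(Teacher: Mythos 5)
Your proposal is correct and follows essentially the same route as the paper's own proof: use the fact that $Q_k$ and $Q_{k-1}$ both reproduce $\Pp_{2r-1}$ so that the coefficient functionals $T_{h^{(k)}}^{[P_{\operatorname{even}}]}(g)(2sh^{(k)})$ and $T_{h^{(k)}}^{[P_{\operatorname{odd}}]}(g)((2s+1)h^{(k)})$ vanish on $\Pp_{2r-1}$, invoke Lemma~\ref{lemma[factor]} to factor out $D_{2r}$, and then substitute via $T_h^{[P_1P_2]}=T_h^{[P_1]}\circ T_h^{[P_2]}$ and $T_h^{[D_{2r}]}=\Delta_h^{2r}$. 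You additionally spell out two points the paper leaves implicit (that vanishing on the grid upgrades to identical vanishing of the polynomial $T_{h^{(k)}}^{[P]}(g)$, and that the cofactors inherit symmetry from the palindromic structure of $P_\Lambda(z^2)$, the binomial sums, and $D_{2r}$), which is a welcome but not essentially different completion of the same argument.
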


\begin{proof} Consider the representation \eqref{eq:RepresentationMixedq_k(f)}  for $q_k(f)$ and $d=1$. If $g$ is arbitrary polynomial of degree at most $2r-1$, then since $Q_k$ reproduces $g$ we have $q_k(g) = 0$ and consequently, 
$c_{k,s}(g)=0$ for $k > 0$. The equations \eqref{eq[q_0(f)]}--\eqref{eq[q_k(f)]} give an explicit formula for the coefficient $c_{k,s}(g)$ as $T_{h^{(k)}}^{[P_{\operatorname{even}}]}(g)(2sh^{(k)})$ and $T_{h^{(k)}}^{[P_{\operatorname{odd}}]}(g)((2s+1)h^{(k)})$. Hence, by Lemma \ref{lemma[factor]} we get 
\eqref{eq[q_k^even&odd]}.
\end{proof}

\begin{theorem} \label{theorem[c_{k,s}]}
In the representation \eqref{eq:RepresentationMixedq_k(f)} of $q_k(f)$, we have for every 
$k \in \ZZdp(u)$ and $s \in I^d(k)$,
\begin{equation} \label{eq[c_{k,s}(f)](d>1)}
c_{k,s}(f) 
 \ = \ 
T_{h^{(k)}}^{[P_{k,s}]}(f)(sh^{(k)}), 
\end{equation}
where
\begin{equation} \label{eq[P_{k,s}]}
P_{k,s} (z)
 \ = \
\prod_{j \not\in u} P_\Lambda(z_j)\,\prod_{j \in u} P_{k_j,s_j}^*(z_j) \prod_{j \in u} D_{2r}(z_j), 
\end{equation}
\begin{equation} \label{eq[P_{k,s}^*]}
P_{k_j,s_j}^*(z_j) 
 \ = \
\begin{cases}
P_{\operatorname{even}}^*(z_j),  & s \ \text{even}, \\[1.5ex]
P_{\operatorname{odd}}^*(z_j), & s \ \text{odd}.
\end{cases} 
\end{equation}
\end{theorem}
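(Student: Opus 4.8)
The plan is to deduce the $d$-variate formula from the univariate one already obtained in Lemma~\ref{lemma[q_k^even&odd]}, by exploiting the tensor-product structure of both the operators $q_k$ and the coefficient functionals $c_{k,s}$, together with the multiplicative behaviour of the shift operators $T_h^{[P]}$ on tensor-product Laurent polynomials.

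First I would recall that, by the definition \eqref{eq:Def[q_k]} of $q_k$ as a tensor product of the univariate differences $Q_{k_i}-Q_{k_i-1}$ and the corresponding tensor-product definition of $c_{k,s}$ in the manner of \eqref{def:Mixed[a_{k,s}(f)]}, one has $c_{k,s}(f)=\bigl(\prod_{j=1}^d c_{k_j,s_j}\bigr)(f)$, where the univariate functional $c_{k_j,s_j}$ is applied to $f$ viewed as a function of the single variable $x_j$ with the other variables held fixed. For $k\in\ZZdpu$ this product splits into the factors with $j\notin u$, for which $k_j=0$, and the factors with $j\in u$, for which $k_j>0$, and I would treat the two groups separately.

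Next, for each one-dimensional factor I would invoke Lemma~\ref{lemma[q_k^even&odd]}. For $j\notin u$ its $k=0$ instance (equivalently \eqref{eq[q_0(f)]}) says that $c_{0,s_j}$ is the functional ``apply $T_{h^{(0)}}^{[P_\Lambda]}$ in the variable $x_j$ and evaluate at $s_j h^{(0)}$''; note that here no even/odd distinction occurs. For $j\in u$ the lemma gives that $c_{k_j,s_j}$ is ``apply $T_{h^{(k_j)}}^{[P_{k_j,s_j}^*]}\circ\Delta_{h^{(k_j)}}^{2r}$ in the variable $x_j$ and evaluate at $s_j h^{(k_j)}$'', with $P_{k_j,s_j}^*$ equal to $P_{\operatorname{even}}^*$ or $P_{\operatorname{odd}}^*$ according to the parity of $s_j$, i.e. exactly \eqref{eq[P_{k,s}^*]}. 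It then remains to reassemble these into a single multivariate functional: operators acting on distinct variables commute, and for a tensor-product Laurent polynomial $P(z)=\prod_j P_j(z_j)$ one has $T_h^{[P]}=\prod_{j=1}^d T_{h_j}^{[P_j]}$, each factor acting in its own variable, which follows from the composition rule $T_h^{[P_1P_2]}=T_h^{[P_1]}\circ T_h^{[P_2]}$ noted after \eqref{def[T_h^{[P]}]} together with the fact that the $P_j(z_j)$ involve pairwise disjoint variables. Using moreover $\Delta_h^{l,u}=T_h^{[D_{l,u}]}$ with $D_{l,u}=\prod_{j\in u}(z_j-1)^l$, the composition of the $d$ univariate operators above is precisely $T_{h^{(k)}}^{[P_{k,s}]}$ with the vector step $h^{(k)}=(h^{(k_1)},\dots,h^{(k_d)})$ and $P_{k,s}$ the tensor-product polynomial $\prod_{j\notin u}P_\Lambda(z_j)\,\prod_{j\in u}P_{k_j,s_j}^*(z_j)\,\prod_{j\in u}D_{2r}(z_j)$ of \eqref{eq[P_{k,s}]}; since each factor is evaluated at the $j$-th coordinate $s_j h^{(k_j)}$, the composite functional is evaluated at $s\,h^{(k)}$, yielding \eqref{eq[c_{k,s}(f)](d>1)}.

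The main obstacle is purely organisational bookkeeping rather than anything substantive: one must keep careful track of which variables carry the $2r$-th difference operator and which carry only $P_\Lambda$, verify that the scalar step sizes $h^{(k_j)}=(2r)^{-1}2^{-k_j}$ line up so that $T_{h^{(k)}}^{[\cdot]}$ with the vector argument $h^{(k)}$ is well defined and that the evaluation points of the different factors are mutually consistent, and confirm that for $j\notin u$ the parity of $s_j$ is irrelevant because then $k_j=0$ and the univariate coefficient formula is parity-free. Once this is set up, the statement follows immediately from Lemma~\ref{lemma[q_k^even&odd]} and the algebra of the operators $T_h^{[P]}$.
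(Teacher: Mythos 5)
Your proposal is correct and follows essentially the same route as the paper: both factor $c_{k,s}$ into its univariate tensor components, apply Lemma~\ref{lemma[q_k^even&odd]} coordinatewise (with the $k_j=0$ coordinates contributing $P_\Lambda(z_j)$ and the $k_j>0$ coordinates contributing $P^*_{k_j,s_j}(z_j)D_{2r}(z_j)$), and reassemble via the multiplicativity of $T_h^{[\cdot]}$ on tensor-product Laurent polynomials. No gap.
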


\begin{proof} Indeed, from the definition of $c_{k,s}(f)$ and Lemma \ref{lemma[q_k^even&odd]} we have
for every $k \in \ZZdp(u)$ and $s \in I^d(k)$,
\begin{equation} \nonumber
c_{k,s}(f) 
 \ = \ 
\biggl(\prod_{j=1}^d T_{h^{(k)}_j}^{[P_{k_j,s_j}]}\biggl)(f)(sh^{(k)}), 
 \ = \ 
T_{h^{(k)}}^{[P_{k,s}]}(f)(sh^{(k)}),
\end{equation}
where
\begin{equation} \nonumber
P_{k_j,s_j}(z_j) 
 \ = \
\begin{cases}
P_\Lambda(z_j), \ & k_j=0, \\[1.5ex] 
P_{\operatorname{even}}^*(z_j) D_{2r}(z_j), \ & k_j > 0, \ s \ \text{even} \\[1.5ex]
P_{\operatorname{odd}}^*(z_j) D_{2r}(z_j), \ & k_j > 0,\ s \ \text{odd}.
\end{cases} 
\end{equation}
\end{proof}

For a Laurent polynomial $P$ as in \eqref{def[P]} we introduce the norm
\begin{equation} \nonumber
\|P\|
:= \ 
\sum_{s \in A}  |c_s|.
\end{equation}
Notice that
$
\|L_\Lambda\|_{C(\TT)}
\ \le \
\|P_\Lambda\|.
$

For given $\Lambda$, $1 \le p \le \infty$ and $0 < \alpha \le 2r$, we define 
\begin{equation}  \label{[ab]}
\begin{split}
a \ &= \ a(p, \alpha, \Lambda, r):= \ (2r)^{- \alpha} [r(p+1)]^{-1/p} \, 
\max\big\{\|P_{\operatorname{even}}^*\|, \|P_{\operatorname{odd}}^*\|\big\}, \\[1.5ex]
b \ &= \ b(\Lambda)\ := \ \|L_\Lambda\|_{C(\TT).}
\end{split}
\end{equation} 

\begin{theorem} \label{theorem[Q-Representation-U(f)]}
Let $1 \le p \le \infty$, and $0 < \alpha \le 2r$. Let $f \in \Ha$. Then $f$ can be represented by  the series \eqref{eq:B-splineRepresentation} converging in the norm of $C(\TTd)$.
Moreover, we have for every $k \in \ZZdpu$,
\begin{equation}  \label{ineq[|q_k(f)|_p](2)}
\|q_k(f) \|_p
 \ \le \ 
 a^{|u|}\, b^{d-|u|}\, 2^{- \alpha|k|_1} \, |f|_{\Hau}.
\end{equation} 
\end{theorem}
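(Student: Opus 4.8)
The convergence statement is immediate from Lemma~\ref{lemma[representation]} (every continuous function, in particular every $f \in \Ha$, has the representation \eqref{eq:B-splineRepresentation} converging in $C(\TTd)$), so the whole task is the quasi-norm bound \eqref{ineq[|q_k(f)|_p](2)}. The plan is to combine the explicit formula for the coefficient functionals from Theorem~\ref{theorem[c_{k,s}]}, which factorizes $c_{k,s}(f)$ through the product of shift operators $T^{[P_{k_j,s_j}]}_{h^{(k)}}$, with the $L_p$-normalization of the periodized B-splines $N_{k,s}$, exactly mirroring the three-case argument ($p=\infty$; $1\le p<\infty$; $0<p<1$) used in the proof of Theorem~\ref{theorem[FaberRepresentation]}.

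First I would do the $p=\infty$ case, which carries all the real content. Fix $k \in \ZZdpu$ and $s \in I^d(k)$. By Theorem~\ref{theorem[c_{k,s}]}, $c_{k,s}(f) = T_{h^{(k)}}^{[P_{k,s}]}(f)(sh^{(k)})$ with $P_{k,s}(z) = \prod_{j\notin u}P_\Lambda(z_j)\,\prod_{j\in u}P^*_{k_j,s_j}(z_j)\,\prod_{j\in u}D_{2r}(z_j)$. The factor $\prod_{j\in u}D_{2r}(z_j)$ corresponds to the mixed difference operator $\Delta^{2r,u}_{h^{(k)}}$, so $c_{k,s}(f)$ is a bounded linear combination of values of $\Delta^{2r,u}_{h^{(k)}}(f)$. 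For the coordinates $j \notin u$ the operator $T^{[P_\Lambda]}_{h^{(k)}}$ is just the quasi-interpolation coefficient functional, bounded in sup-norm by $\|P_\Lambda\|$; but I want the constant $b = \|L_\Lambda\|_{C(\TT)}$ rather than $\|P_\Lambda\|$, so I would not estimate these factors individually — instead I would regroup, writing $q_k(f)$ as $\big(\prod_{j\notin u}Q_{k_j}\big)$ applied to $\big(\prod_{j\in u}q_{k_j}\big)(f)$, and use the tensor-product operator bound $\|Q_{k_j}(g)\|_{C(\TT)}\le \|L_\Lambda\|_{C(\TT)}\|g\|_{C(\TT)}$ from \eqref{[|Q_k(f)|<(T)]}, which supplies one factor of $b$ per coordinate $j\notin u$. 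For the active coordinates $j\in u$, Lemma~\ref{lemma[q_k^even&odd]} expresses $q_{k_j}$ as $T^{[P^*_{\rm even/odd}]}_{h^{(k)}}\circ\Delta^{2r}_{h^{(k)}}$ composed with the partition-of-unity sum of $N_{k_j,s_j}$; since $0 \le \sum_{s_j}N_{k_j,s_j}\le 1$ and $h^{(k)}_j = (2r)^{-1}2^{-k_j} \le 1$, one gets, using $0<\alpha\le 2r$ and the definition of $|f|_{\Hau}$,
\begin{equation*}
\Big\|\big(\prod_{j\in u}q_{k_j}\big)(f)\Big\|_{C(\TTd)}
\ \le\
\Big(\prod_{j\in u}\max\{\|P^*_{\rm even}\|,\|P^*_{\rm odd}\|\}\Big)\,\prod_{j\in u}(h^{(k)}_j)^\alpha\,|f|_{\Hau}
\ =\
\big((2r)^{-\alpha}\max\{\|P^*_{\rm even}\|,\|P^*_{\rm odd}\|\}\big)^{|u|}\,2^{-\alpha|k|_1}\,|f|_{\Hau}.
\end{equation*}
Multiplying by $b^{d-|u|}$ from the inactive coordinates gives \eqref{ineq[|q_k(f)|_p](2)} for $p=\infty$ with $a$ replaced by $(2r)^{-\alpha}\max\{\|P^*_{\rm even}\|,\|P^*_{\rm odd}\|\}$; the extra factor $[r(p+1)]^{-1/p}$ is harmless here since it equals $1$ at $p=\infty$.

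For $1\le p<\infty$ I would pass from the sup-norm estimate on coefficients to an $L_p$ estimate by the standard device: bound $\|q_k(f)\|_p^p$ by $(\sup_s|c_{k,s}(f)|)^p$ times $\int_{\TTd}\big(\sum_s N_{k,s}(x)\big)^p\,dx = |I^d(k)|\int_{\TTd}|N_{k,0}(x)|^p\,dx$, and compute $\int_0^{2r2^{-k_j}}|N_{k_j,0}|^p = 2^{-k_j}\|M\|_{L_p(\II)}^p$-type one-dimensional integrals; tracking the $2r$-scaling produces the $[r(p+1)]^{-1/p}$ per active variable (for the piecewise-linear case $r=1$ this is exactly the $(p+1)^{-1/p}$ of Theorem~\ref{theorem[FaberRepresentation]}, and for general $r$ one only needs $\|M_{2r}\|_{L_p(\II)}\le$ a bound producing the stated constant — or one can simply absorb the exact $L_p$-norm of $M_{2r}$ into the $\max$ over $\|P^*\|$, noting the theorem's $a$ is stated as an upper bound). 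The case $0<p<1$, although not asserted in the theorem's hypothesis ($p\ge1$), would follow the same way starting from $\|q_k(f)\|_p^p\le\sum_s|c_{k,s}(f)|^p\int|N_{k,s}|^p$. The main obstacle is bookkeeping rather than mathematics: one must verify that regrouping $q_k = \big(\prod_{j\notin u}Q_{k_j}\big)\circ\big(\prod_{j\in u}q_{k_j}\big)$ is legitimate (it is, since the operators in different coordinates commute and $Q_{k_j}=\sum_{k'\le k_j}q_{k'}$ reproduces constants, so for $k_j=0$ one has $q_0 = Q_0$), and that the one-dimensional $L_p$-normalizations of $N_{k_j,0}$ assemble into exactly the advertised constant $a$; I expect no genuine difficulty beyond careful constant-chasing, entirely parallel to the already-completed proof of Theorem~\ref{theorem[FaberRepresentation]}.
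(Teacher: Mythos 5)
Your proposal is correct and follows essentially the same route as the paper: the regrouping $q_k = \bigl(\prod_{j\notin u}Q_{k_j}\bigr)\circ\bigl(\prod_{j\in u}q_{k_j}\bigr)$ with the bound \eqref{[|Q_k(f)|<(T)]} for the inactive coordinates, the factorization through $T^{[P^*]}_{h^{(k)}}\circ\Delta^{2r}_{h^{(k)}}$ from Lemma~\ref{lemma[q_k^even&odd]} for the active ones, and the passage from $p=\infty$ to $1\le p<\infty$ via the $L_p$-normalization of the partition of unity are exactly the paper's steps. The only detail you leave implicit is that the paper pins down the constant $[r(p+1)]^{-1/p}$ via Young's inequality $\|M_{2r}\|_{L_p}\le\|M_{2r-2}\|_{L_1}\|M_2\|_{L_p}$, which is the bound on $\|M_{2r}\|_{L_p}$ you anticipated needing.
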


\begin{proof} 
The first part of the lemma on representation and convergence is in Lemma \ref{lemma[representation]}. Let us first prove  \eqref{ineq[|q_k(f)|_p](2)} for $p=\infty$. For convenience, we temporarily use the notation $a = a_p$. In this case, \eqref{ineq[|q_k(f)|_p](2)} is as 
\begin{equation}  \label{ineq[|q_k(f)|_infty]}
\|q_k(f) \|_\infty
 \ \le \ 
\sup_{s \in I^d(k)} \, |c_{k,s}(f)|  
 \ \le \ 
a_\infty^{|u|}\, b^{d-|u|}\, 2^{- \alpha|k|_1} \, |f|_{\Hau}.
\end{equation}
By definition we have 
\begin{equation*} 
q_k(f) 
 \ = \ 
\biggl(\prod_{j \not\in u} q_{k_j}\biggl)(g), \quad g:= \biggl(\prod_{j \in u} q_{k_j}\biggl)(f). 
\end{equation*}
Hence, by \eqref{[|Q_k(f)|<(T)]} we derive
 \begin{equation}  \label{ineq[|q_k(f)|_infty](2)}
\|q_k(f) \|_\infty
 \ \le \ 
b^{d-|u|}\, \|g\|_\infty,
\end{equation} 
Similarly to the proof of Theorem \ref{theorem[FaberRepresentation]} we can show that
for every $k \in \ZZdpu$,
\begin{equation} \label{ineq[|g|_infty]}
\|g \|_\infty
 \ = \ 
\biggl\|\biggl(\prod_{j \in u} q_{k_j}\biggl)(f)\biggl\|_\infty
\ \le \
\sup_{s \in I^d(k)} \, \biggl|\biggl(\prod_{j \in u} c_{k_j,s_j}\biggl)(f)\biggl|.   
\end{equation} 
Observe that for every Laurent polynomial and every $h \in \RR$,
\begin{equation} \label{[|T_h^{[P]}(f)|_infty<]}
\|T_h^{[P]}(f)\|_\infty
\ \le \
\|P\|\, \|f\|_\infty.
\end{equation}
Setting 
\begin{equation} \nonumber
P_{k,s}'(z) 
:= \
\prod_{j \in u} P_{k_j,s_j}^*(z_j), 
\end{equation}
we get
\begin{equation} \nonumber
\biggl(\prod_{j \in u} c_{k_j,s_j}\biggl)(f)
\ = \
T_{h^{(k)}}^{[P_{k,s}']}\biggl[\Delta^{l,u}_{h^{(k)}}(f,sh^{(k)})\biggl] 
 \end{equation}
Hence, by \eqref{eq[c_{k,s}(f)](d>1)}--\eqref{eq[P_{k,s}^*]} and \eqref{[|T_h^{[P]}(f)|_infty<]}
\begin{equation*} 
\begin{aligned}
\sup_{s \in I^d(k)} \, \biggl|\biggl(\prod_{j \in u} c_{k_j,s_j}\biggl)(f)\biggl|
 \ &= \ 
\sup_{s \in I^d(k)} \biggl|T_{h^{(k)}}^{[P_{k,s}']}\biggl[\Delta^{l,u}_{h^{(k)}}(f,sh^{(k)}))\biggl]\biggl| \\[1.5ex]
 \ &\le \ 
 \sup_{s \in I^d(k)} \|P_{k,s}'\|\biggl|\Delta^{l,u}_{h^{(k)}}(f,sh^{(k)}))\biggl| \\[1.5ex]
 \ &\le \ 
\big[\max\big\{\|P_{\operatorname{even}}^*\|, \|P_{\operatorname{odd}}^*\|\big\}\big]^{|u|}\,
\prod_{j \in u}|h^{(k)}_j|^\alpha \, |f|_{\Hau}\\[1.5ex]
  \ &\le \ 
a_\infty^{|u|}\, 2^{- \alpha|k|_1} \, |f|_{\Hau}.
\end{aligned}
\end{equation*}
This together with \eqref{ineq[|q_k(f)|_infty](2)} and \eqref{ineq[|g|_infty]} proves \eqref{ineq[|q_k(f)|_infty]}.

Let us prove \eqref{ineq[|q_k(f)|_p](2)} for the case $1 \le p < \infty$. We have for every $k \in \ZZdpu$,
\begin{equation*} 
\begin{aligned}
\|q_k(f) \|_p^p
 \ &= \ 
 \int_{\TTd}\Big| \sum_{s \in I^d(k)} \, c_{k,s}(f) M_{k,s} (x) \Big|^p \, dx  \\[1.5ex]
  \ &\le \ 
 \sup_{s \in I^d(k)} \, |c_{k,s}(f)|^p \, 
\int_{\TTd}\Big| \sum_{s \in I^d(k)} M_{k,s}(x) \Big|^p \, dx  \\[1.5ex]
 \ &= \ 
 \sup_{s \in I^d(k)} \, |c_{k,s}(f)|^p \, 
 |I^d(k)| \int_{\TTd}|M_{k,0}(x)|^p \, dx  \\[1.5ex]
 \ &= \ 
 \sup_{s \in I^d(k)} \, |c_{k,s}(f)|^p \, 
 |I^d(k)| \prod_{j \in u} \int_{0}^{(2r)^{-1}2^{-k_j}}|M(2^{k_j}x_j)|^p \, dx_j  \\[1.5ex]
 \ &= \ 
  \sup_{s \in I^d(k)} \, |c_{k,s}(f)|^p \, 
 |I^d(k)| \, (2r)^{-|u|}\, 2^{-|k|_1} \, \biggl( \int_{\RR} |M_{2r}(t)|^p \, dt \biggl)^{|u|}. 
\end{aligned}
\end{equation*}
By employing  \eqref{ineq[|q_k(f)|_infty]}, Young's inequality 
\begin{equation*} 
\|M_{2r}\|_{L_p(\RR)}= \|M_{2r-2} * M_2\|_{L_p(\RR)}\, \le \, \|M_{2r-2}\|_{L_1(\RR)}\|M_2\|_{L_p(\RR)},
\end{equation*}
the equations $\|M_{2r-2}\|_{L_1(\RR)}=1$ and
$
\|M_2\|_{L_p(\RR)}^p
\ = \
2(p+1)^{-1}, 
$
we complete the estimation as follows
\begin{equation*} 
\begin{aligned}
\|q_k(f) \|_p^p
 \ &\le \ 
\biggl[a_\infty^{|u|}\, b^{d-|u|}\, 2^{- \alpha|k|_1} \, |f|_{\Hau}\biggl]^p  \, 
 \big[2^{|k|_1}\big] \, \big[(2r)^{-|u|}\, 2^{-|k|_1}\big]\, \big[2^{|u|} (p+1)^{-|u|}\big] \\[1.5ex]
 \ &= \ 
 \big[r(p+1)\big]^{-|u|} \, \biggl[a_\infty^{|u|}\, b^{d-|u|} \biggl]^p  \, 
 2^{- p\alpha|k|_1} \, |f|_{\Hau}^p \\[1.5ex]
\ &= \ 
 \biggl[a_p^{|u|}\, b^{d-|u|} \biggl]^p  \, 
 2^{- p\alpha|k|_1} \, |f|_{\Hau}^p.
\end{aligned}
\end{equation*}
\end{proof}

Theorems on B-spline quasi-interpolation representations with discrete equivalent quasi-norm in terms of coefficient
functionals have been proved in \cite{Di09}--\cite{Di13} for non-periodic various Besov spaces.


\subsection{Sampling recovery and cubature}

For $m \in {\ZZ}_+$, we define the operator $R^\nu_m$ by 
\begin{equation*}
R^\nu_m(f) 
:= \ 
\sum_{k \in \ZZdp: \, |\supp(k)| \le \nu, \ |k|_1 \le m} q_k(f)
\ = \
\sum_{k \in \ZZdp: \, |\supp(k)| \le \nu, \ |k|_1 \le m} \ \sum_{s \in I^d(k)} c_{k,s}(f)\, N_{k,s}.
\end{equation*} 
For functions $f$ on $\TTd$ having at most $\nu$ of active variables, $R^\nu_m$ defines the linear sampling algorithm 
on the Smolyak grid $G^\nu(m)$ 
\begin{equation*} 
R^\nu_m(f) 
\ = \ 
S_n(\Psi^\nu_m,f) 
\ = \ 
\sum_{\xi \in G^\nu(m)} f(\xi) \psi_{\xi}, 
\end{equation*} 
where $n := \ |G^\nu(m)|$, $\Psi^\nu_m:= \{\psi_{\xi}\}_{\xi \in G^\nu(m)}$  and for $\xi =  2^{-k} s$,  $\psi_\xi$ are explicitly constructed as linear combinations of at most 
at most $N$ B-splines $N_{k,j}$ for some $N \in \NN$ which is independent of $k,s,m$ and $f$.  

With where $a,b$ are as in \eqref{[ab]} we put
\[
\delta(\nu,a,b,m)
:= \
 \sum_{l=0}^\nu \binom{\nu}{l}\, a^l \, b^{\nu-l} \, \beta(l,m), 
\]
and
\begin{equation} \nonumber
c(\nu,a,b)
:= \
|2^\alpha -2|^{- |\operatorname{sgn}(\alpha - 1)|} \times
\begin{cases}
[a + b]^\nu, \ & \alpha > 1,\\[1.5ex]
\nu\,[a + b]^\nu, \ & \alpha = 1,\\[1.5ex]
[a/(2^\alpha -1) +b]^\nu, \ & \alpha < 1.
\end{cases}
\end{equation}

\begin{theorem} \label{theorem[|f - R_m(f)|_p,r>1]}
Let $1 \le p \le \infty$, $0 < \alpha \le 2r$ and $1 \le \nu \le d$. Then we have for every $m \ge \nu$, 
\begin{equation}  \label{ineq[|f - R_m(f)|_p,r>1](1)}
\begin{aligned}
s^\nu_m(\Uan)_p
\ &\le \ 
\sup_{f \in \Uan} \|f - R^\nu_m(f)\|_p
\ \le \
\delta(\nu,a,b,m)\,  2^{- \alpha m}  \\[1ex]
\ &\le \  \exp(2^\alpha - 1) \, [a/(2^\alpha -1)+ b]^\nu \,  2^{- \alpha m}\, m^{\nu-1}.
\end{aligned}
\end{equation}
Moreover,  if in addition, $m \ge 2(\nu-1)$,
\begin{equation}  \label{ineq[|f - R_m(f)|_p,r>1](2)} 
s^\nu_m(\Uan)_p
\ \le \ 
\sup_{f \in \Uan} \|f - R^\nu_m(f)\|_p
\ \le \  
c(\nu,a,b) \,  2^{- \alpha m}\, \binom{m}{\nu-1}.
\end{equation}
\end{theorem}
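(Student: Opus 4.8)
The plan is to mimic the structure of the proof of Theorem~\ref{theorem[|f - R^nu_m(f)|_p]} from Subsection~\ref{Upper bounds, alpha<2}, replacing the Faber-series bound \eqref{ineq[|q_k(f)|_p]} by the B-spline quasi-interpolation bound \eqref{ineq[|q_k(f)|_p](2)} of Theorem~\ref{theorem[Q-Representation-U(f)]}. First I would treat the case $\nu = d$. Given $f \in \Ua = U^{\alpha,d}_\infty$, by Lemma~\ref{lemma[representation]} we have $f = \sum_{k \in \ZZdp} q_k(f)$ with convergence in $C(\TTd)$, hence $f - R^d_m(f) = \sum_{|k|_1 > m} q_k(f)$. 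Splitting over $u \subset [d]$ according to $k \in \ZZdpu$ and applying \eqref{ineq[|q_k(f)|_p](2)} with $|f|_{\Hau} \le 1$, I get
\begin{equation} \nonumber
\|f - R^d_m(f)\|_p
\ \le \
\sum_{u \subset [d]} \ \sum_{k \in \ZZdpu: \, |k|_1 > m}
a^{|u|}\, b^{d-|u|}\, 2^{- \alpha|k|_1}
\ = \
\sum_{l=0}^d \binom{d}{l}\, a^l \, b^{d-l} \sum_{k \in \NN^l: \, |k|_1 > m} 2^{- \alpha|k|_1}.
\end{equation}
This is exactly the computation in \eqref{estimation[|f - R_m(f)|_p](3)} with the weight $2^{-l}(p+1)^{-l/p}$ replaced by $a^l b^{d-l}$. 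The inner sum equals $\sum_{j=1}^\infty \binom{m+j-1}{l-1} 2^{-\alpha(m+j)} = 2^{-\alpha(m+1)} F_{m,l-1}(t)$ with $t = 2^{-\alpha}$, and then by \eqref{eq[BG]} one has $2^{-\alpha(m+1)} F_{m,l-1}(t) = 2^{-\alpha m}\, b(\alpha,p)^{??}$... more precisely $2^{-\alpha(m+1)} F_{m,l-1}(t) = 2^{-\alpha m}\,\beta(\alpha,l,m)$ after absorbing the factor, giving $\|f - R^d_m(f)\|_p \le 2^{-\alpha m}\,\delta(d,a,b,m)$, which is the first line of \eqref{ineq[|f - R_m(f)|_p,r>1](1)} for $\nu=d$.

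For the remaining bounds I would apply the two auxiliary lemmas exactly as before. Applying Lemma~\ref{lemma[sum(2)]} to $F_{m,l-1}(t)$, bounding $m^{l-1} \le m^{d-1}$, and resumming the binomial series $\sum_{l=0}^d \binom{d}{l} a^l b^{d-l} (2^\alpha-1)^{-l} = [a/(2^\alpha-1) + b]^d$ yields the second line $\exp(2^\alpha-1)\,[a/(2^\alpha-1)+b]^d\,2^{-\alpha m} m^{d-1}$. Applying instead Lemma~\ref{lemma[sum(1)]} (valid for $m \ge 2(d-1)$) gives $F_{m,l-1}(t) \le \binom{m}{l-1} b_{l-1}(2^{-\alpha})$; bounding $\binom{m}{l-1} \le \binom{m}{d-1}$ and summing over $l$ produces a constant that I would identify, via the explicit formula for $b_n$ at $t = 2^{-\alpha}$ (distinguishing $\alpha > 1$, $\alpha = 1$, $\alpha < 1$ exactly as in the definition of $a(\nu)$ and $a^\circ(d)$), with $c(d,a,b)$; this gives \eqref{ineq[|f - R_m(f)|_p,r>1](2)} for $\nu=d$.

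Finally, the case $\nu < d$ reduces to $\nu = d$ by the same argument as in Theorem~\ref{theorem[|f - R^nu_m(f)|_p]}: for $f \in \Uan$ one may assume the active variables are $x_1,\dots,x_\nu$, so that $f = \sum_{u \subset [\nu]}\sum_{k \in \ZZdpu} q_k(f)$ depends only on those variables, and $R^\nu_m$ acts on $f$ as the $\nu$-dimensional operator $R^\nu_m$ applied to $f \in U^\alpha_\infty(\TT^\nu)$; invoking the already-proven case $d = \nu$ finishes the proof. I expect the only genuinely delicate point to be the bookkeeping in the $b_{l-1}(2^{-\alpha})$ resummation — verifying that the three-case constant $c(\nu,a,b)$ is exactly what emerges, including the factor $|2^\alpha - 2|^{-|\sgn(\alpha-1)|}$ coming from $(2t-1)^{-1} = (2^{1-\alpha}-1)^{-1}$ when $\alpha \neq 1$ and the factor $\nu$ (not $\nu+1$, after the $\binom{m}{l-1}\le\binom{m}{d-1}$ step absorbs one index) when $\alpha = 1$; the rest is a routine transcription of the $\alpha \le 2$ proof with the new coefficient weights.
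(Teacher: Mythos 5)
Your proposal follows the paper's proof essentially verbatim: the paper likewise treats $\nu=d$ first, bounds $\|f-R^d_m(f)\|_p$ by summing \eqref{ineq[|q_k(f)|_p](2)} over $u\subset[d]$ and $|k|_1>m$ to obtain $2^{-\alpha(m+1)}\sum_{l=0}^d\binom{d}{l}a^l b^{d-l}F_{m,l-1}(2^{-\alpha})=2^{-\alpha m}\delta(d,a,b,m)$, then applies Lemma~\ref{lemma[sum(2)]} and Lemma~\ref{lemma[sum(1)]} exactly as you describe, and reduces $\nu<d$ to the $\nu$-dimensional case by the same restriction-to-active-variables argument. The identification of the constants $\delta$ and $c(\nu,a,b)$, and your check that $2^{-\alpha(m+1)}F_{m,l-1}(2^{-\alpha})=2^{-\alpha m}\beta(l,m)$, are all consistent with the paper.
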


\begin{proof} We prove the theorem for the case $\nu = d$ which yields the case $\nu < d$ by the same argument as in the proof of Theorem~\ref{theorem[|f - R_m(f)|_p,Has]}. Let $f \in \Ua$. Put $t=2^{- \alpha}$.
From Theorem \ref{theorem[Q-Representation-U(f)]} and  and \eqref{eq[BG]} it follows that for every $m \ge d - 1$, 
\begin{equation} \label{estimation[|f - R_m(f)|_p,r>1]}
\begin{aligned}
\|f - R^d_m(f)\|_p
  \ &\le \ 
 \sum_{|u|\le d} \ \sum_{k \in \ZZdpu: \, |k|_1 > m}\|q_k(f) \|_p \\[1.5ex]
  \ &\le \ 
 \sum_{|u|\le d} \ \sum_{k \in \ZZdpu: \, |k|_1 > m}
a^{|u|} \, b^{d - |u|} \, 2^{- \alpha|k|_1}  \\[1.5ex]
 \ &= \ 
 \sum_{l=0}^d \binom{d}{l}\, a^l \, b^{d - |u|} \,
 \sum_{k \in \NN^l: \, |k|_1 > m}   2^{- \alpha|k|_1}  \\[1.5ex]
 \ &= \ 
 \, \sum_{l=0}^d \binom{d}{l}\,a^l \, b^{d-l} \,  \sum_{j=1}^\infty
 \binom{m+j-1}{l-1}\, 2^{- \alpha (m+j)}  \\[1.5ex]
\ &= \ 
 2^{- \alpha (m+1)}\sum_{l=0}^d \binom{d}{l}\,a^l \, b^{d-l} \, F_{m,l-1}(t)  \\[1.5ex]
\ &= \ 
 2^{- \alpha m} \delta(\alpha,d,a,b,m).
 \end{aligned}
\end{equation}
The first inequality in \eqref{ineq[|f - R_m(f)|_p,r>1](1)} is proven.

Hence, applying  Lemma \ref{lemma[sum(2)]} for $t=2^{- \alpha}$ gives
\begin{equation} \nonumber
\begin{aligned}
\|f - R^d_m(f)\|_p
\ &\le \ 
\exp(2^\alpha - 1)\,  2^{- \alpha m} \sum_{l=0}^d \binom{d}{l}\, 
a^l \, b^{d-l} \, [2^\alpha-1]^{-l} m^{l-1} \\[1.5ex]
\ &\le \ 
\exp(2^\alpha - 1) \, [a/(2^\alpha -1)+ b]^d \, 2^{- \alpha m}\, m^{d-1}
\end{aligned}
\end{equation}
which proves the second inequality in \eqref{ineq[|f - R_m(f)|_p,r>1](1)}.

We now prove \eqref{ineq[|f - R_m(f)|_p,r>1](2)}. If $m \ge 2(d - 1)$, by \eqref{estimation[|f - R_m(f)|_p,r>1]} and Lemma \ref{lemma[sum(1)]} for $t=2^{- \alpha}$ we get
\begin{equation} \nonumber
\begin{aligned}
\|f - R^d_m(f)\|_p
\ &\le \ 
2^{- \alpha (m+1)}\sum_{l=0}^d \binom{d}{l}\, 
a^l \, b^{d-l} \,  \binom{m}{l-1}\, b_{l-1}(2^{-\alpha})  \\[1.5ex]
\ &\le \ 
 2^{- \alpha (m+1)} \binom{m}{d-1}\sum_{l=0}^d \binom{d}{l}\, 
a^l \, b^{d-l} \, b_{l-1}(2^{-\alpha})  \\[1.5ex]
\ &= \
c(\alpha, d,a,b)\, 2^{- \alpha m}\, \binom{m}{d-1}.
\end{aligned}
\end{equation}
The proof is complete.
\end{proof}


In a similar way to Theorem~\ref{theorem[Int_m<Has]}, from Theorem \ref{theorem[|f - R_m(f)|_p,r>1]} we obtain

\begin{theorem} \label{theorem[Int<,r>1]}
Let $0 < \alpha \le 2r$ and $1 \le \nu \le d$. Let 
$a = (2r)^{- \alpha-1} \, \max\big\{\|P_{\operatorname{even}}^*\|, \|P_{\operatorname{odd}}^*\|\big\}$ and 
$b= \|L_{\Lambda}\|_{C(\TT)}$ (cf. \eqref{[ab]}). Then we have for every $m \ge \nu$, 
\begin{equation}  \nonumber
\Int_m^\nu(\Uan) 
\ \le \ 
\delta(\nu,a,b,m)\,  2^{- \alpha m} 
\ \le \  \exp(2^\alpha - 1) \, [a/(2^\alpha -1)+ b]^\nu \,  2^{- \alpha m}\, m^{\nu-1}.
\end{equation}
Moreover,  if in addition, $m \ge 2(\nu-1)$,
\begin{equation}  \nonumber 
\Int_m^\nu(\Uan) 
\ \le \  
c(\nu,a,b) \, 2^{- \alpha m}\, \binom{m}{\nu-1}.
\end{equation}
\end{theorem}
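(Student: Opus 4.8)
The plan is to reduce the cubature estimate to the sampling-recovery estimate of Theorem~\ref{theorem[|f - R_m(f)|_p,r>1]}, exactly as was done to derive Theorem~\ref{theorem[Int_m<Has]} from Theorem~\ref{theorem[|f - R_m(f)|_p,Has]}. First I would observe that the linear sampling algorithm $R^\nu_m(f) = S_n(\Psi^\nu_m,f) = \sum_{\xi \in G^\nu(m)} f(\xi)\psi_\xi$ on the Smolyak grid $G^\nu(m)$ induces a cubature formula $I^\nu_m(\Lambda^\nu_m,f) = \sum_{\xi \in G^\nu(m)}\lambda_\xi f(\xi)$ by setting the weights $\lambda_\xi := \int_{\IId}\psi_\xi(x)\,dx$. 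With this choice, linearity of the integral gives $I^\nu_m(\Lambda^\nu_m,f) = \int_{\IId} R^\nu_m(f)(x)\,dx$, hence
\[
|I(f) - I^\nu_m(\Lambda^\nu_m,f)| \ = \ \Big|\int_{\IId} \big(f(x) - R^\nu_m(f)(x)\big)\,dx\Big| \ \le \ \|f - R^\nu_m(f)\|_1 \ \le \ \|f - R^\nu_m(f)\|_p
\]
for $p \ge 1$, and therefore $\Int_m^\nu(\Uan) \le \sup_{f \in \Uan}\|f - R^\nu_m(f)\|_1$. Then I would simply invoke Theorem~\ref{theorem[|f - R_m(f)|_p,r>1]} with $p=1$: this gives the bound $\delta(\nu,a,b,m)\,2^{-\alpha m}$ with $a = a(1,\alpha,\Lambda,r) = (2r)^{-\alpha}[r\cdot 2]^{-1}\max\{\|P^*_{\operatorname{even}}\|,\|P^*_{\operatorname{odd}}\|\} = (2r)^{-\alpha-1}\max\{\|P^*_{\operatorname{even}}\|,\|P^*_{\operatorname{odd}}\|\}$ and $b = \|L_\Lambda\|_{C(\TT)}$, matching the values stated in the theorem, together with the two consequent bounds $\exp(2^\alpha-1)[a/(2^\alpha-1)+b]^\nu 2^{-\alpha m}m^{\nu-1}$ (for $m \ge \nu$) and $c(\nu,a,b)\,2^{-\alpha m}\binom{m}{\nu-1}$ (for $m \ge 2(\nu-1)$).

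The one point requiring a line of justification is that the integration weights are well defined and finite, i.e.\ that each $\psi_\xi$ is integrable. This is immediate since, by the construction recalled before the theorem, every $\psi_\xi$ is a finite linear combination of at most $N$ periodic B-splines $N_{k,j}$, each of which is bounded and $1$-periodic, hence integrable over $\IId$; the coefficients depend only on the fixed sequence $\Lambda$ and the refinement relations, not on $f$. Thus $\lambda_\xi = \int_{\IId}\psi_\xi(x)\,dx$ is a genuine finite number and $I^\nu_m(\Lambda^\nu_m,\cdot)$ is a legitimate cubature formula on $G^\nu(m)$, so it is an admissible competitor in the infimum defining $\Int_m^\nu(\Uan)$.

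There is no real obstacle here; the content of the theorem is entirely contained in Theorem~\ref{theorem[|f - R_m(f)|_p,r>1]}, and the only work is the bookkeeping that $\|\cdot\|_1 \le \|\cdot\|_p$ for $p \ge 1$ on the probability space $\TTd$ and that the $p=1$ specialization of the constants $a,b$ in \eqref{[ab]} matches the constants displayed in the statement. I would write the proof in a single short paragraph: introduce the induced weights, bound the cubature error by $\|f - R^\nu_m(f)\|_1$, and then cite Theorem~\ref{theorem[|f - R_m(f)|_p,r>1]} for $p=1$ to conclude both displayed inequalities. If desired, one could also note in passing that these upper bounds complement the lower bounds for $\Int^\nu_m(\Uan)$ obtained in the $\alpha \le 2$ case in Theorem~\ref{theorem[Int_m<Ha]}, but that is not needed for the proof itself.
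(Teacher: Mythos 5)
Your proposal is correct and is essentially the paper's argument: the paper derives this theorem by exactly the reduction $\Int_m^\nu(\Uan) \le \sup_{f}\|f - R^\nu_m(f)\|_1$ via the induced weights $\lambda_\xi = \int_{\IId}\psi_\xi\,dx$, followed by the $p=1$ case of Theorem~\ref{theorem[|f - R_m(f)|_p,r>1]} (mirroring how Theorem~\ref{theorem[Int_m<Has]} follows from Theorem~\ref{theorem[|f - R_m(f)|_p,Has]}). Your check that the $p=1$ specialization of $a(p,\alpha,\Lambda,r)$ yields $(2r)^{-\alpha-1}\max\{\|P_{\operatorname{even}}^*\|,\|P_{\operatorname{odd}}^*\|\}$ is the right bookkeeping.
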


\section{Examples}
\label{Examples}

The upper bounds  for $s^\nu_m(\Uan)_p$ and $\Int_m^\nu(\Uan)$ obtained in Theorem~\ref{theorem[|f - R_m(f)|_p,r>1]} and Theorem~\ref{theorem[Int<,r>1]} depend on the parameters $p, \alpha, d, \nu, r,a,b$. In applications, if the 
parameters $p, \alpha, d, \nu$ are a priori known, these upper bounds are controlled by parameters $r,a,b$ which are determined by the choice of a univariate quasi-interpolation operator $Q$ of the form  \eqref{def:Q}. 
The operator $Q$ is induced by the sequence $\Lambda$  as in \eqref{def:Lambda} which can be uniquely characterized by the univariate symmetric Laurent polynomial
$P_\Lambda$. Moreover, the parameters $a,b$ defined in \eqref{[ab]} contain  
$\max\big\{\|P_{\operatorname{even}}^*\|, \|P_{\operatorname{odd}}^*\|\big\}$ 
and $\|L_{\Lambda}\|_{C(\TT)}$ which are desirable to be minimum by the choice of $P_{\Lambda}$. 
In this subsection, we give some examples of the univariate symmetric Laurent polynomial $P_\Lambda$ characterizing quasi-interpolation operator $Q$ of the form  \eqref{def:Q} with approximate estimates of  
$\|P_{\operatorname{even}}^*\|$,  $\|P_{\operatorname{odd}}^*\|$ and $\|L_\Lambda\|_{C(\TT)}$ or 
$\|P_\Lambda\|$ with $\|L_\Lambda\|_{C(\TT)} \le \|P_\Lambda\|$. For a given $P_\Lambda$, the Laurent polynomials 
$P_{\operatorname{even}}^*$ and  $P_{\operatorname{odd}}^*$ can be computed from \eqref{P'_even}--\eqref{P_even,P_odd} and \eqref{eq[q_k^even&odd]}.

\subsection{Piece-wise linear quasi interpolation}
Let us consider the case $r=1$ when $M(x)\ = \ (1 - |x-1|)_+$ is the piece-wise linear cardinal B-spine with knot at $0,1,2$. Let $\Lambda = \{\lambda(s)\}_{j=0}$ $(\mu=0)$ be a given by $\lambda(0)= 1$. 
If $N_k$ is the periodic extension of $M(2^{k+1}\cdot)$,  then 
\begin{equation*}
N_{k,s}(x):= \ N_k(x - s), \ k \in {\ZZ}_+, \ s \in I(k), 
\end{equation*}
where $I(k) := \{0,1,..., 2^{k+1} - 1\}$.
Consider the related periodic quasi-interpolation operator for functions $f$ on $\TT$ and $k \in \ZZ_+$,
\begin{equation} \nonumber
Q_k(f,x)= \ \sum_{s \in I(k)} f(2^{-(k+1)}(s+1)) N_{k,s}(x) 
\end{equation}
We have
\begin{equation} \nonumber
\begin{aligned}
P_\Lambda(z)
\ &= \ 
z, \\[1.5ex]
P_{\operatorname{even}}(z)
\ &= \
- \frac{1}{2} (z-1)^2, \quad 
P_{\operatorname{even}}^*(z)
\ = \
- \frac{1}{2},\\[1.5ex]
P_{\operatorname{odd}}(z)
&= \ 
P_{\operatorname{odd}}^*(z)
\ = \ 
0,
\end{aligned}
\end{equation}
and
\begin{equation} \nonumber
\|L_\Lambda\|
\ = \ 
1, \quad
\|P_{\operatorname{even}}^*\|
\ = \ 
\frac{1}{2}, \quad
\|P_{\operatorname{odd}}^*\|
\ = \ 
0.
\end{equation}
Hence,
\begin{equation} \nonumber
q_0(f) 
 \ = \ 
\sum_{s=0}^1 T_{2^{-1}}^{[P_\Lambda]}(f)(2^{-1}s) N_{0,s}
\ = \
f(0), 
 \end{equation}
 and for $k>0$,
\begin{equation} \nonumber
q_k(f) 
 \ = \ 
q_k^{\operatorname{even}}(f) 
\ = \
\sum_{s =0}^{2^k-1} 
\biggl\{- \frac {1}{2} \Delta_{2^{-(k+1)}}^2 f(2^{-k}s)\biggl\} N_{k,2s}.
\end{equation}
With these formulas for $q_k(f)$ after redefining $N_{k,2s}$ as $\varphi_{k,s}$, the quasi-interpolation representation 
\eqref{eq:B-splineRepresentation} becomes the Faber series.

\subsection{Cubic B-spline quasi-interpolation}

For $r=2$, we can take   
\begin{equation} \nonumber
P_\Lambda(z)
\ = \ 
\frac{z^2}{6}(- z + 8 - z^{-1}),
\ = \ 
- \frac{1}{6} z^3 + \frac{8}{6} z^2 - \frac{1}{6} z.
\end{equation}
Then, we have
\begin{equation} \nonumber
\begin{aligned}
P_{\operatorname{even}}(z)
\ &= \
(z-1)^4 P_{\operatorname{even}}^*(z), \quad 
P_{\operatorname{even}}^*(z)
\ = \
\frac{1}{48}z^{-2}\biggl( z^4 + 4z^3 + 8z^2 + 4z + 1 \biggl),\\[1ex]
P_{\operatorname{odd}}(z)
&:= \ 
(z-1)^4 P_{\operatorname{odd}}^*(z), \quad
P_{\operatorname{odd}}^*(z)
:= \ 
\frac{1}{12} \biggl( z^2 + 4z + 1 \biggl),
\end{aligned}
\end{equation}
and 
\begin{equation} \nonumber
\|L_\Lambda\|
\ = \ 
\frac{11}{9}, \quad
\|P_{\operatorname{even}}^*\|
\ = \ 
\frac{3}{8}, \quad
\|P_{\operatorname{odd}}^*\|
\ = \ 
\frac{1}{2}.
\end{equation}

\subsection{Quintic B-spline quasi-interpolation representation}
For $r=3$,we can take 
\begin{equation} \nonumber
P_\Lambda(z)
:= \ 
\frac{z^3}{14400} [25150 - 5876(z + z^{-1}) + 448(z^2 + z^{-2}) + 52(z^3 + z^{-3}) + (z^4 + z^{-4})].
\end{equation}
Then, we have
\begin{equation} \nonumber
P_{\operatorname{even}}(z)
\ = \
(z-1)^6 P_{\operatorname{even}}^*(z), \quad
P_{\operatorname{odd}}(z)
\ = \ 
(z-1)^6 P_{\operatorname{odd}}^*(z),
\end{equation}
\begin{equation} \nonumber
\begin{aligned}
P_{\operatorname{even}}^*(z)
\ &= \
\frac{1}{460800} \big[139760 + 97002(z + z^{-1}) + 42508(z^{2} + z^{-2}) + 11462(z^{3} + z^{-3}) \\[1.5ex]
\ & \quad + \ 2328(z^{4} + z^{-4})  + 458(z^{5} + z^{-5}) + 36(z^{6} + z^{-6}) + 6(z^{7} + z^{-7}) \big],\\[1.5ex]
P_{\operatorname{odd}}^*(z)
&= \ 
\frac{z}{460800} \big[164910 + 97002(z + z^{-1}) + 36632(z^{2} + z^{-2}) + 11462(z^{3} + z^{-3}) \\[1.5ex]
\ & \quad + \ 2776(z^{4} + z^{-4})  + 458(z^{5} + z^{-5}) + 88(z^{6} + z^{-6}) + 6(z^{7} + z^{-7}) + (z^{8} + z^{-8})\big].\\[1.5ex]
\end{aligned}
\end{equation}
and
\begin{equation} \nonumber
\|P_\Lambda\|
\ = \ 
\frac{37904}{14400}
\ \approx \ 2.63, \quad
\|P_{\operatorname{even}}^*\|
\ = \ 
\frac{447360}{460800}
\ \approx \ 0.97, \quad
\|P_{\operatorname{odd}}^*\|
\ = \ 
\frac{467160}{460800}
\ \approx \ 1.00.
\end{equation}

\noindent
{\bf Acknowledgments.}  This work is funded by Vietnam National Foundation for Science and Technology Development (NAFOSTED) under  Grant No. 102.01-2014.02. A part of this work was done when the author was working as a research professor at the Vietnam Institute for Advanced Study in Mathematics (VIASM). He  would like to thank  the VIASM  for providing a fruitful research environment and working condition.

\end{document}